\numberwithin{equation}{section}
\def\PP{\mathbb{P}}
\def\QQ{\mathbb{Q}}
\def\RR{\mathbb{R}}
\def\R{\mathbb{R}}
\def\EE{\mathbb{E}}
\def\eps{\varepsilon}
\newcommand\ind{\mathbb{I}}
\newtheorem{thm}{Theorem}[section]
\newtheorem{lem}[thm]{Lemma}
\newtheorem{cor}[thm]{Corollary}
\newtheorem{prop}[thm]{Proposition}
\newenvironment{proof}{\paragraph{Proof} \phantom{9}}{\hfill$\Box$\bigskip}
\begin{document}

\title{Strong solutions to stochastic differential equations with rough coefficients}

\author{Nicolas Champagnat$^{1,2,3}$, Pierre-Emmanuel Jabin$^{4}$}

\footnotetext[1]{Universit\'e de Lorraine, Institut Elie Cartan de Lorraine,
    UMR 7502, Vand\oe uvre-l\`es-Nancy, F-54506, France;
    E-mail:~\texttt{Nicolas.Champagnat@inria.fr}}

\footnotetext[2]{CNRS, Institut Elie Cartan de Lorraine, UMR
    7502, Vand\oe uvre-l\`es-Nancy, F-54506, France}

\footnotetext[3]{Inria, TOSCA, Villers-l\`es-Nancy, F-54600, France}

\footnotetext[4]{Cscamm and Dpt. of Mathematics, University of Maryland, College Park, MD
  20742 USA, E-mail:~\texttt{pjabin@umd.edu}}

\maketitle

\begin{abstract}
  We study strong existence and pathwise uniqueness for stochastic differential equations in
  $\RR^d$ with rough coefficients, and without assuming uniform ellipticity for the diffusion
  matrix. Our approach relies on direct quantitative estimates on solutions to the SDE,
  assuming Sobolev bounds on the drift and diffusion coefficients, and $L^p$ bounds for the
  solution of the corresponding Fokker-Planck PDE, which can be proved separately. This allows
  a great flexibility regarding the method employed to obtain these last bounds. Hence we are
  able to obtain general criteria in various cases, including the uniformly elliptic case in
  any dimension, the one-dimensional case and the Langevin (kinetic) case.
\end{abstract}

\noindent {\it MSC 2000 subject classifications:} 60J60, 60H10, 35K10
\bigskip

\noindent {\it Key words and phrases:} stochastic differential equations; strong solutions;
  pathwise uniqueness; Fokker-Planck equation; rough drift; rough diffusion matrix; degenerate
  diffusion matrix; maximal operator.

\section{Introduction}
%
We investigate the well posedness of the Stochastic Differential Equation (SDE) in $\RR^d$, $d\geq 1$,
\begin{equation}
dX_t=F(t,X_t)\,dt+\sigma(t,X_t)\,dW_t,\quad X_0=\xi,\label{sde}
\end{equation}
where $F:\RR_+\times\R^d\rightarrow \R^d$ and
$\sigma:\mathbb{R}_+\times\mathbb{R}^d\rightarrow \mathbb{R}^d\times\mathbb{R}^r$ are Borel
measurable function, $(W_t,t\geq 0)$ is a $r$-dimensional standard Brownian motion on some
given complete filtered probability space $(\Omega,({\cal F}_t)_{t\geq 0},\mathbb{P})$, and
$\xi$ is a ${\cal F}_0$-measurable random variable.

When $\sigma$ and $F$ are bounded, it is standard to deduce from It\^o's formula that the law $u(t,dx)$ of $X_t$ is a (weak, measure)
solution to the Fokker-Planck PDE on $\RR_+\times\RR^d$
\begin{equation}
  \partial_tu+\nabla_x \cdot(Fu)
  =\nabla^2_x:(au)=\sum_{1\leq i,j\leq d}\frac{\partial^2(a_{ij}u)}{\partial x_i\partial
    x_j},\quad u(t=0,dx)=u^0,\label{pde}
\end{equation}
where $a=\frac{1}{2}\sigma\,\sigma^*$ and $u^0$ is the law of the initial r.v. $\xi$. 
 
We first recall some classical terminology: weak existence holds for~\eqref{sde} if one can construct a filtered probability space
$(\Omega,({\cal F}_t)_{t\geq 0},\mathbb{P})$, an adapted Brownian motion $W$ and an adapted process $X$ on this space solution
to~\eqref{sde}. Uniqueness in law holds if every solution $X$ to~\eqref{sde}, possibly on
different probability space, has the same law, in particular if there is uniqueness of measured-valued solutions to \eqref{pde}.
Strong existence means that one can find a solution to~\eqref{sde} on any given filtered probability space equipped with any given
adapted Brownian motion. Finally, pathwise uniqueness means that, on any given filtered probability space equipped with any given
Brownian motion, any two solutions to~\eqref{sde} with the same given $\mathcal{F}_0$-measu\-rable initial condition $\xi$
coincide. Our goal is to study {\em strong existence and pathwise uniqueness} for rough $\sigma$ and $F$, through quantitative estimates on the difference between solutions.

This question has been the object of many works aiming to improve the original result of It\^o~\cite{Ito}. Krylov and
Veretennikov~\cite{VK76,V82} studied the case of uniformly continuous $a$ and bounded $F$, proving that only two cases are possible:
either pathwise uniqueness holds, or strong existence does not hold. The question was studied again recently by Krylov and
R\"ockner~\cite{KR05} and Zhang~\cite{Zh05,Zh11}. All these works assume that the matrix $a$ is uniformly elliptic, i.e.\ that
$a(x)-c\,\text{Id}$ is positive definite for all $x$ for some constant $c>0$. The time-independent one-dimensional case was also deeply
studied by Engelbert and Schmidt~\cite{ES1} (see also~\cite{YW71,LG}).

The main tools used in all the previous works are Krylov's inequality~\cite{K71} and its extensions (see for
example~\cite{EK00,KR05,KL08,Zh11}), Zvonkin's transformation~\cite{Zv74} to remove the drift, and a priori estimates on solutions of
the backward Kolmogorov equation or Fokker-Planck PDE~\eqref{pde}~\cite{VK76,K01,KR05,Zh05}. Of great importance is also the result
of Yamada and Watanabe~\cite{YW71}, which proves that strong existence holds as soon as pathwise uniqueness and weak existence hold
for all initial condition. Since general conditions for weak existence are well-known (see~\cite{K74,SV79,RS91,EK00,KL08,F08}; see
also~\cite{PLL12} for a recent and deep study of the question), one only has to prove pathwise uniqueness to obtain strong existence.
In dimension one, a key tool to prove pathwise uniqueness is the local time. 

Another approach to strong existence and pathwise uniqueness was recently initiated by Le Bris and Lions in~\cite{LBL04,LBL08}, based
on well-posedness results for the backward Kolmogorov equation. The authors define the notion of almost everywhere stochastic flows
for~\eqref{sde}, which combines existence and a flow property for almost all initial conditions, and give precise results in the case
where $a=\text{Id}$. The general case was recently studied deeply by P.-L. Lions in~\cite{PLL12}, who reduces the question to
well-posedness, $L^1$ norms and stability properties for two backward Kolmogorov equations; the first one associated to the
SDE~\eqref{sde} and the other one obtained by a doubling of variable technique. Note that this approach does not require assumptions
of uniform ellipticity for $a$. 

We present here another approach which relies on estimates on path functionals of the difference between solutions to regularizations
of~\eqref{sde}. This is inspired by the method used by Crippa and De Lellis~\cite{CD} to obtain an alternative proof of the
results of Di Perna and Lions~\cite{DL} on well-posedness for ODEs. The functional of~\cite{CD} was used and adapted to
obtain several extensions~\cite{Ja,CJ} for deterministic systems  (see
also~\cite{RZ10} for a study of weak uniqueness using this method). Note that other techniques exist to prove well posedness directly on characteristics of ODEs, see \cite{HLL} for instance.

The quantitative estimates which we develop here let us treat separately the strong existence
and pathwise uniqueness for \eqref{sde} from the question of bounds on solutions to \eqref{pde}. 
The typical result presented here will hence assume that some estimate could be obtained on
solutions to \eqref{pde} (by whichever method) and conclude that strong existence and pathwise
uniqueness hold provided that
some bounds on $\sigma$ and $F$ in Sobolev spaces related to the bounds on $u$ hold. The great advantage is the flexibility that one then enjoys as it is possible to choose the best method to deal with \eqref{pde} according to any additional structure. For instance, ellipticity on $\sigma$ is not required {\em a priori}.  
The second advantage of the method is its simplicity as it relies on some direct quantitative estimates on the solutions.

To give a better idea let us present a typical result that we obtain. For existence we consider sequence of approximations to \eqref{sde}
\begin{equation}
dX_t^n=F_n(t,X_t^n)\,dt+\sigma_n(t,X_t^n)\,dW_t,\quad X_0^n=\xi,\label{sden}
\end{equation}
with the same Brownian motion $W_t$ for any $n$. And we introduce the corresponding approximation for \eqref{pde}
\begin{equation}
  \partial_tu_n+\nabla_x \cdot(F_nu_n)
  =\sum_{1\leq i,j\leq d}\frac{\partial^2}{\partial x_i\partial
    x_j}(a_{ij}^n(t,x)u_n(t,x)),\quad u_n(t=0,dx)=u^0,\label{pden}
\end{equation}
with $a^n=\sigma_n\,\sigma_n^*$.

The next result is not the most general we obtain, but it does not require any additional definition and illustrates the type of assumptions we need.
\begin{thm} Assume $d\geq 2$. One has
  \begin{description}
  \item[\textmd{(i)}] Existence: Assume that there exists a sequence of smooth $F_n,\;\sigma_n\in L^\infty$ such that the solution
    $u_n$ to \eqref{pden} satisfies for $1\leq p,q\leq\infty$, with $1/p'+1/p=1$, $1/q+1/q'=1$ 
    \[\begin{split}
      & \sigma_n\longrightarrow \sigma \ \mbox{in}\ L^{q}_{t,loc}(L^{p}_x)\quad\text{\ and\
      }\quad F_n\rightarrow F\ \mbox{in}\ L^{q}_{t,loc}(L^{p}_x)
      ,\\
      & \sup_n \left(\|\nabla\sigma_n\|_{L^{2q}_{t,loc}(L^{2p}_x)}+\|\nabla F_n\|_{L^{q}_{t,loc}(L^{p}_x)}+\|F_n\|_{L^\infty}+\|\sigma_n\|_{L^\infty}\right)<\infty,\\
      & \sup_n\|u_n\|_{L^{q'}_{t,loc}(L^{p'}_{x})}<\infty,\ u_n\longrightarrow u\ \mbox{in the weak-* topology of measures.}\\
    \end{split}\] 
    Then there exists a strong solution $X_t$ to \eqref{sde} and $(X^n_t-\xi,t\in[0,T])_n$ converges in $L^p(\Omega,L^\infty([0,T]))$
    for all $p>1$ to $(X_t-\xi,t\in[0,T])$, with $X_t^n$ the solutions to \eqref{sden}. In addition, $u(dt,dx)=u(t,dx)dt$, where
    $u(t,\cdot)$ is the law of $X_t$ for all $t\in[0,T]$.
  \item[\textmd{(ii)}] Uniqueness: Let $X$ and $Y$ be two solutions to \eqref{sde} with one-dimensional time marginals $u_X(t,x)dx$
    and $u_Y(t,x)dx$ both in $L^{q'}_{t,loc}(L^{p'}_x)$. Assume that $X_0=Y_0$ a.s.\ and that
    $$
    \|F\|_{L^q_{t,loc}(W^{1,p}_x)}+\|\sigma\|_{L^{2q}_{t,loc}(W^{1,2p}_x)}<\infty
    $$
    with $1/p+1/p'=1$ and $1/q+1/q'=1$. Then one has pathwise uniqueness: $sup_{t\geq 0} |X_t-Y_t|=0$ a.s.
  \end{description}
  \label{maincorollary}
\end{thm}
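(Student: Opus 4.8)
The plan is to adapt the Crippa--De Lellis logarithmic functional to the stochastic setting, measuring the distance between two solutions (part (ii)) or between two approximations (part (i)) by a single scalar quantity to which It\^o's formula applies. The essential analytic inputs are the pointwise bound $|g(x)-g(y)|\le C|x-y|\big(M|\nabla g|(x)+M|\nabla g|(y)\big)$ for Sobolev $g$, the $L^s$-boundedness of the maximal operator $M$ for $s>1$, and the fact that the time-marginal law of a solution furnishes an $L^{p'}_x$ weight against which maximal functions can be integrated.

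For uniqueness, set $Z_t=X_t-Y_t$, so $Z_0=0$, and consider $Q_\delta(t)=\EE\big[\log(1+|Z_t|^2/\delta^2)\big]$ for $\delta>0$. Applying It\^o's formula to $\phi(z)=\log(1+|z|^2/\delta^2)$ along $dZ_t=(F(t,X_t)-F(t,Y_t))\,dt+(\sigma(t,X_t)-\sigma(t,Y_t))\,dW_t$ produces three contributions; the stochastic integral is a local martingale and, after localization, drops out in expectation. In the drift term I would bound $|\nabla\phi(Z_t)|\le 2|Z_t|/(\delta^2+|Z_t|^2)$ and combine it with $|F(t,X_t)-F(t,Y_t)|\le C|Z_t|\big(M|\nabla F|(X_t)+M|\nabla F|(Y_t)\big)$; the resulting prefactor $|Z_t|^2/(\delta^2+|Z_t|^2)\le 1$ yields a bound by $C\big(M|\nabla F|(X_t)+M|\nabla F|(Y_t)\big)$ uniform in $\delta$. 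The It\^o correction pairs $|\partial^2\phi(Z_t)|\le C/(\delta^2+|Z_t|^2)$ with $|\sigma(t,X_t)-\sigma(t,Y_t)|^2\le C|Z_t|^2\big(M|\nabla\sigma|(X_t)+M|\nabla\sigma|(Y_t)\big)^2$, and the same cancellation leaves the square of a maximal function; this squaring is precisely why $\sigma$ is taken in $L^{2q}_t(W^{1,2p}_x)$ rather than $L^q_t(W^{1,p}_x)$. Taking expectations and weighting by the laws gives $\EE[M|\nabla F|(X_t)]\le C\|\nabla F(t,\cdot)\|_{L^p_x}\|u_X(t,\cdot)\|_{L^{p'}_x}$ and $\EE[(M|\nabla\sigma|(X_t))^2]\le C\|\nabla\sigma(t,\cdot)\|^2_{L^{2p}_x}\|u_X(t,\cdot)\|_{L^{p'}_x}$ (and likewise for $Y$), using the $L^s$-boundedness of $M$. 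Integrating in time and applying H\"older (with $2/(2q)+1/q'=1$) bounds $Q_\delta(t)$ by a constant independent of $\delta$; since $Q_\delta(0)=0$ and $\log(1+|Z_t|^2/\delta^2)\to\infty$ on $\{|Z_t|>0\}$ as $\delta\to0$, Fatou forces $\PP(|Z_t|>0)=0$. Repeating with $\sup_{s\le t}$ inside the functional and using Burkholder--Davis--Gundy for the martingale upgrades this to $\sup_t|X_t-Y_t|=0$ a.s.

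For existence, the smooth coefficients yield unique strong solutions $X^n$ of \eqref{sden}, and I would run the same functional on $Z^{n,m}_t=X^n_t-X^m_t$, splitting $F_n(t,X^n)-F_m(t,X^m)=\big(F_n(t,X^n)-F_n(t,X^m)\big)+\big(F_n(t,X^m)-F_m(t,X^m)\big)$ and similarly for $\sigma$. The first bracket is handled exactly as in the uniqueness step, using the uniform Sobolev bounds on $F_n,\sigma_n$ and the uniform density bounds $\sup_n\|u_n\|_{L^{q'}_t(L^{p'}_x)}$; the second bracket is an error term controlled by $\|F_n-F_m\|_{L^q_t(L^p_x)}$ and $\|\sigma_n-\sigma_m\|_{L^{2q}_t(L^{2p}_x)}$ weighted by $u_m$, which vanish as $n,m\to\infty$ since $F_n\to F$ and $\sigma_n\to\sigma$ in those norms. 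This Cauchy estimate makes $(X^n)$ converge in $L^p(\Omega,L^\infty([0,T]))$ for every $p>1$; the limit $X$ then solves \eqref{sde} by passing to the limit in the ordinary and stochastic integrals, and the weak-* limit $u$ of $u_n$ is identified as the law of $X_t$.

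The main obstacle I anticipate is the diffusion term: controlling the It\^o correction and the martingale part simultaneously with the supremum in time, so that the Burkholder--Davis--Gundy contribution can be absorbed while the whole estimate stays uniform in $\delta$ and respects the quadratic $L^{2p}_x,L^{2q}_t$ budget for $\nabla\sigma$. A secondary difficulty is the rigorous localization that makes the stochastic integral a genuine martingale, together with the justification of the limit $\delta\to0$ via Fatou.
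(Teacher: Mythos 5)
Your strategy is the one the paper uses: the Crippa--De Lellis logarithmic functional $\log(1+|Z_t|^2/\delta^2)$, It\^o's formula, the pointwise bound via maximal functions, and pairing $M|\nabla F|$ and $(M|\nabla\sigma|)^2$ against the $L^{q'}_t(L^{p'}_x)$ marginals. For $p>1$ your outline is correct and matches the paper's proof (which routes the corollary through a general theorem stated with weighted norms $\|\sigma\|_{H^1(u)}$, $\|F\|_{W^{\phi,\text{weak}}(u)}$, but with the same mechanism).

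There is, however, a genuine gap at the endpoint $p=1$, which the statement includes ($1\le p\le\infty$, so $p'=\infty$ and $\nabla F_n$ is only bounded in $L^q_t(L^1_x)$). Your key inequality $\EE[M|\nabla F|(X_t)]\le C\|\nabla F(t,\cdot)\|_{L^p_x}\|u_X(t,\cdot)\|_{L^{p'}_x}$ explicitly invokes the $L^s$-boundedness of $M$ for $s>1$; for $s=1$ the maximal operator maps $L^1$ only into weak $L^1$, so $M|\nabla F|$ need not be integrable against $u\in L^\infty$ and the drift contribution to your estimate is simply not finite. (The diffusion term is safe since $2p\ge 2$.) The paper introduces the truncated operator $M_L$, the norm $W^{\phi,\text{weak}}(u)$, and a de la Vall\'ee Poussin equi-integrability argument precisely for this case: one replaces the pointwise inequality by $|F(x)-F(y)|\le (h(x)+h(y))(|x-y|+L^{-1})$ with $h=|F|+M_L\nabla F$, couples $L=1/\varepsilon$ to the regularization parameter, and shows $\int h\,u\,dx\,dt\le C|\log\varepsilon|/(\varepsilon\,\phi(\varepsilon^{-1}))$ for some superlinear $\phi$; the resulting bound on the functional is no longer uniform in $\varepsilon$ but is $o(|\log\varepsilon|)$, which still suffices after dividing by $|\log\varepsilon|$. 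Without this idea your argument proves the theorem only for $p>1$. Two secondary points you gloss over: the upgrade to $L^p(\Omega,L^\infty([0,T]))$ is done in the paper not by BDG on $X^n-X^m$ directly but by running the functional on $|A^n-A^m|^2\vee|M^n-M^m|^2$ at stopping times (Doob decomposition), then Doob's inequality for the martingale part and a stopping-time tail bound for the drift part --- BDG alone does not handle the finite-variation part; and the identification $F_n(s,X^n_s)\to F(s,X_s)$ in the limiting equation requires an argument (the paper uses the maximal function again together with lower semicontinuity of the weighted norms under weak-* convergence of $u_n$), since $F$ and $\sigma$ are only defined a.e.\ while $X^n_s\to X_s$ only almost surely.
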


We obtain better results in the one-dimensional case.
\begin{thm} Assume $d=1$.
  \begin{description}
  \item[\textmd{(i)}] The existence result of Theorem~\ref{maincorollary}~(i) holds under the same assumptions on $F_n,\sigma_n,u_n$,
    except that the assumption $\sup_n\|\nabla\sigma_n\|_{L^{2q}_{t,loc}(L^{2p}_x)}<\infty$ can be replaced by
    $$
    \sup_n\|\sigma_n\|_{L^{2q}_{t,loc}(W^{1/2,2p}_x)}<\infty
    $$
    and in the case $p=1$, the assumption $\sup_n\|\nabla F_n\|_{L^{q}_{t,loc}(L^{p}_x)}<\infty$ must be replaced by
    $$
    \sup_n\|\nabla F_n\|_{L^{q}_{t,loc}(L^{1+\varepsilon}_x)}<\infty
    $$
    for some $\varepsilon>0$.
  \item[\textmd{(ii)}] The uniqueness result of Theorem~\ref{maincorollary}~(ii) holds true under the same assumptions on
    $F,\sigma,u_X,u_Y$, except that  $\|\sigma\|_{L^{2q}_{t,loc}(W^{1,2p}_x)}<\infty$ can be replaced by
    $$
    \|\sigma\|_{L^{2q}_{t,loc}(W^{1/2,2p}_x)}<\infty.
    $$
    and in the case $p=1$, the assumption $\|F\|_{L^{q}_{t,loc}(W^{1,p}_x)}<\infty$ must be replaced by
    $$
    \|F\|_{L^{q}_{t,loc}(W^{1,1+\varepsilon}_x)}<\infty
    $$
    for some $\varepsilon>0$.
  \end{description}
  \label{maincorollary-1d}
\end{thm}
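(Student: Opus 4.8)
The plan is to run the quantitative scheme behind Theorem~\ref{maincorollary}, localizing the two places where the one-dimensional structure enters. For the uniqueness statement~(ii), given the two solutions $X,Y$ with $Z_t:=X_t-Y_t$, I would apply It\^o's formula to a functional $\phi_\delta(Z_t)$ with $\phi_\delta(z)\approx\log(1+|z|/\delta)$ for $|z|\gtrsim\delta$, smoothed near the origin. Since $X$ and $Y$ are driven by the same Brownian motion, taking expectations kills the martingale part and leaves a drift contribution weighted by $\phi_\delta'(Z_s)$ and a second-order contribution weighted by $\frac12\phi_\delta''(Z_s)$ carrying the factor $|\sigma(X_s)-\sigma(Y_s)|^2$. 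The role of the logarithmic growth is to absorb the drift with the \emph{sharp} exponent: combining the maximal-function bound $|F(x)-F(y)|\le C|x-y|(\mathcal M|\nabla F|(x)+\mathcal M|\nabla F|(y))$ with $|\phi_\delta'(z)|\le(\delta+|z|)^{-1}$, the $|Z_s|$ produced by the drift cancels the denominator, so the drift contribution is bounded by $\EE[\mathcal M|\nabla F|(X_s)+\mathcal M|\nabla F|(Y_s)]$, with no leftover power of the difference. For existence~(i) I would apply the same functional to two approximations $X^n,X^m$ solving~\eqref{sden}, deduce that $(X^n)$ is Cauchy in $L^p(\Omega,L^\infty([0,T]))$ (passing from pointwise-in-time to $\sup_t$ control by Doob/Burkholder on the martingale term), and identify the limit as a strong solution whose marginals are the weak-$*$ limit $u$ of the $u_n$.

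The one-dimensional improvement for $\sigma$ is concentrated in the second-order term. In dimension $d\ge2$ one controls $|\sigma(x)-\sigma(y)|^2\le C|x-y|^2(\mathcal M|\nabla\sigma|(x)+\mathcal M|\nabla\sigma|(y))^2$, which is why Theorem~\ref{maincorollary} asks for $\nabla\sigma\in L^{2p}_x$. In $d=1$ I would instead invoke the Haj\l asz characterization of $W^{1/2,2p}(\R)$: there is $g\in L^{2p}(\R)$ with $\|g\|_{L^{2p}}\le C\|\sigma\|_{W^{1/2,2p}}$ and $|\sigma(x)-\sigma(y)|\le|x-y|^{1/2}(g(x)+g(y))$ a.e., whence
\[
|\sigma(x)-\sigma(y)|^2\le 2|x-y|\big(g(x)^2+g(y)^2\big).
\]
Only one power of $|x-y|$ is produced, but this is exactly what the Yamada--Watanabe smoothing of $\phi_\delta$ near the origin can use: there $\phi_\delta''(z)\sim|z|^{-1}$ (up to the logarithmic cutoff), so that $\tfrac12\phi_\delta''(Z_s)|\sigma(X_s)-\sigma(Y_s)|^2\le C(g(X_s)^2+g(Y_s)^2)$ pointwise, the singular weight being cancelled by the factor $|Z_s|$. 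Crucially, the resulting bound is a function of $X_s$ (resp.\ $Y_s$) alone, so its expectation only tests $g^2$ against the \emph{marginal} law: $\EE[g(X_s)^2]=\int g^2\,u_X(s,\cdot)\le\|g\|_{L^{2p}}^2\|u_X(s)\|_{L^{p'}}$, with no maximal operator and no joint bounds. Integrating in time with H\"older in the $L^q_t/L^{q'}_t$ duality (the square accounting for the $2q$ in the time exponent) controls this by $\|\sigma\|_{L^{2q}_tW^{1/2,2p}_x}^2\,\|u_X\|_{L^{q'}_tL^{p'}_x}$. This is why no extra integrability on $\sigma$ is needed at $p=1$: the diffusion never calls on the maximal operator.

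The drift, by contrast, relies irreducibly on the Hardy--Littlewood maximal operator, which is bounded on $L^p$ only for $p>1$. For $p>1$ one estimates $\EE[\mathcal M|\nabla F|(X_s)]=\int\mathcal M|\nabla F|\,u_X(s,\cdot)\le\|\mathcal M|\nabla F|\|_{L^p}\|u_X(s)\|_{L^{p'}}\le C\|\nabla F\|_{L^p}\|u_X(s)\|_{L^{p'}}$. At $p=1$ this breaks down because $\mathcal M$ is only of weak type $(1,1)$; requiring $\nabla F\in L^{1+\eps}_x$ instead restores $\mathcal M|\nabla F|\in L^{1+\eps}_x$, which I would pair with the marginals through $u_X\in L^1\cap L^\infty\subset L^{(1+\eps)'}_x$ (recall $p'=\infty$ when $p=1$). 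Collecting the drift and diffusion bounds gives $\EE[\phi_\delta(Z_t)]\le C$ uniformly in $\delta$; letting $\delta\to0$ and using that $\phi_\delta(z)$ increases to a positive limit on $\{|z|>0\}$ forces $Z_t=0$, i.e.\ pathwise uniqueness, and the same scheme run on $X^n-X^m$ gives the Cauchy property in~(i).

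I expect the main difficulty to be the simultaneous handling, by a single functional, of two competing requirements: the drift forces a logarithmic profile so as to cancel the difference and reach the sharp exponent, while the diffusion forces the Yamada--Watanabe smoothing near the diagonal so as to exploit only half a derivative of $\sigma$. Making the near-diagonal (local-time-like) contribution vanish, or stay bounded, uniformly in $\delta$ while using only the \emph{marginal} densities $u_X,u_Y\in L^{p'}$ rather than any control of the joint law of $(X_s,Y_s)$, is the crux, and is precisely where the single power of $|x-y|$ gained from $W^{1/2,2p}$ must be matched exactly against the $|z|^{-1}$ singularity of $\phi_\delta''$. A secondary difficulty, for existence, is the passage to the limit: one must propagate the uniform bounds $\sup_n\|u_n\|_{L^{q'}_tL^{p'}_x}<\infty$ and the Sobolev bounds along~\eqref{sden}, apply the functional to a \emph{pair} of distinct coefficient fields $(F_n,\sigma_n)$ and $(F_m,\sigma_m)$, absorb the mismatch terms $F_n-F_m$ and $\sigma_n-\sigma_m$ using the assumed strong convergence in $L^q_tL^p_x$, and finally identify the limiting law with $u$.
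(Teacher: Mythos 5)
Your overall architecture matches the paper's: Theorem~\ref{maincorollary-1d} is deduced from a general statement (Theorem~\ref{theo1d}) by pairing maximal functions of the coefficients against the marginals via H\"older, and the two one-dimensional gains are exactly where you place them --- the pointwise bound $|\sigma(x)-\sigma(y)|\le(g(x)+g(y))|x-y|^{1/2}$ with $g\in L^{2p}$ controlled by $\|\sigma\|_{W^{1/2,2p}}$ (the paper proves this as Lemma~\ref{maximal1/2} with $g=M|\partial^{1/2}_x\sigma|$ rather than via Haj\l asz, but the content is the same), and the $p=1$ loss for $F$ coming from the failure of the maximal operator on $L^1$ (in dimension one the proof must use the full maximal function $M|\nabla F|$ as in \eqref{W110}, not the truncated $M_L$ of \eqref{W11}, which is why $W^{1,1}$ must be upgraded to $W^{1,1+\varepsilon}$). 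The gap is in the test functional, and it sits exactly at the point you yourself call the crux. You require simultaneously $|\phi_\delta'(z)|\le(\delta+|z|)^{-1}$ (to absorb the drift with no Gronwall term) and $\phi_\delta''(z)\lesssim|z|^{-1}$ (to absorb the half-derivative bound on $\sigma$), together with $\phi_\delta(z)\to\infty$ off the origin. For $\log(1+|z|/\delta)$ ``smoothed near the origin'' this fails: the kink at $0$ carries total convexity $\int(\phi_\delta'')^+\approx\phi_\delta'(\delta^+)\sim\delta^{-1}$, so a mollification at scale $\delta$ produces $\phi_\delta''\sim\delta^{-2}$ on an interval of length $\delta$, and the second-order term there is of size $\delta^{-2}\cdot|Z_s|\cdot(g^2(X_s)+g^2(Y_s))\sim\delta^{-1}(g^2(X_s)+g^2(Y_s))$, which blows up as $\delta\to0$. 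To keep $(\phi_\delta'')^+\le C/|z|$ while depositing total mass $\delta^{-1}$ you must spread the convexity over $[\delta e^{-c/\delta},\delta]$, \`a la Yamada--Watanabe; this can in fact be done, and then your scheme closes (with exponentially degraded, but still vanishing, rates in the Cauchy estimate for existence), but it is a genuine construction, not a smoothing, and it is precisely the step you assert rather than supply.

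The paper resolves the same tension the other way (Section~\ref{sec:pf-1d}): it takes $f(z)=|z|\log(1+|z|^2/\eps^2)$, for which $|f''|\le C/(\eps+|z|)$ comes for free, and accepts $|f'|\le 4\log(1+|z|^2/\eps^2)$ in place of $(\eps+|z|)^{-1}$. The drift can then no longer be cancelled pointwise; it produces a term of the form $f(Z_s)\,\big(M|\nabla F|(X_s)+M|\nabla F|(Y_s)\big)$, which is handled by a pathwise Gronwall argument: the functional is multiplied by $e^{-U_t}$ with $dU_t=4\big(M|\nabla F|(X_t)+M|\nabla F|(Y_t)\big)\,dt$, at the price of the additional estimates $\EE U_T\le C$ and $\PP(U_T\ge\log K)\le C/\log K$ in the Markov-inequality step (for uniqueness the paper uses instead the local-time-like $\tilde L_\eps(z)=|z|\,\mathbbm{1}_{|z|\ge\eps}$, suitably smoothed, together with a subsequence extraction to make the near-diagonal term vanish). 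Whichever route you take, some device beyond the single logarithmic profile is unavoidable, and your write-up currently contains neither: you should either carry out the exponential-depth Yamada--Watanabe construction of $\phi_\delta$ and verify both derivative bounds on it, or switch to the paper's $|z|\log(1+|z|^2/\eps^2)$ and add the exponential weight for the drift.
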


Note that no assumption of uniform ellipticity is needed in Theorems~\ref{maincorollary} and~\ref{maincorollary-1d}, provided one can
prove a priori estimates on the various solutions $u_n,\ u_X,\ u_Y$ to~\eqref{pden} and~\eqref{pde}. Note also that pathwise uniqueness is
proved only for particular solutions to~\eqref{sde}, so we cannot use directly the result of Yamada and Watanabe to deduce strong
existence. Hence our method proves separately strong existence and pathwise uniqueness; however they use very similar techniques.

The goal of Section~\ref{sec:results} is to give the statement of all our results. We start in
Subsection~\ref{sec:norms-banach-spaces} by defining the norms and Banach spaces needed to state our most general results in
Subsection~\ref{sec:gal-res}. Theorems~\ref{maincorollary} and~\ref{maincorollary-1d} will then be obtained as corollaries of these
general results.

Of course, as they are laws, $u_n$, $u_X$ and $u_Y$ all have bounded mass so Theorems~\ref{maincorollary} and~\ref{maincorollary-1d}
really depend on whether it is possible to obtain higher integrability for a solution of~\eqref{pde}. Several situations where this
can be done will be studied in Subsection~\ref{sec:csq}, including the uniformly elliptic,
non-degenerate one-dimensional and kinetic (Langevin)
cases. The conditions for strong existence and pathwise uniqueness can then be compared with the best conditions in the literature.
The rest of the paper is devoted to the proofs of all the results stated in Section~\ref{sec:results}, and the organization of the
rest of the paper is given in the end of Section~\ref{sec:results}.




\section{Statement of the results}
\label{sec:results}

As usual one needs regularity assumptions on $F$ and $\sigma$ to ensure strong existence and pathwise uniqueness for~(\ref{sde}). In
our case, these are Sobolev norms with respect to $u$, defined in Subsection~\ref{sec:norms-banach-spaces}. Our general results are then
stated in Subsection~\ref{sec:gal-res}, and several consequences of these results are discussed in Subsection~\ref{sec:csq}.

\subsection{Norms and Banach spaces}
\label{sec:norms-banach-spaces}

We fix $T>0$ and pose
\begin{align}
  \|\sigma\|_{H^1(u)}^2&=\int_0^T\int_{\R^d} |\sigma(t,x)|^2\,u(t,dx)\,dt+\int_0^T\int_{\R^d}  (M|\nabla \sigma|(t,x))^2\,u(t,dx)\,dt \notag\\
  &=\mathbb{E}\left(\int_0^T |\sigma|^2(t,X_t)dt\right)+\mathbb{E}\left(\int_0^T (M|\nabla \sigma|(t,X_t))^2dt\right), \label{H1}
\end{align}
where $M$ is the usual maximal operator
\[
Mf(x)=\sup_r \frac{1}{|B(0,r)|}\int_{B(0,r)} f(x+z)\,dz. 
\] 
Of course here $|\nabla\sigma|$ and hence $M|\nabla\sigma|$ could have $+\infty$ values on sets of positive Lebesgue measure. 

Note that this indeed defines a norm which enjoys the usual properties, semi-continuity for instance, as proved in Section~\ref{sec:technical}.
\begin{prop} The definition \eqref{H1} is a norm. Moreover if $\sigma_n\longrightarrow \sigma$ in the sense of distribution then
\[
\|\sigma\|_{H^1(u)}\leq \liminf_n \|\sigma_n\|_{H^1(u)},
\]
And if for a given $\sigma$, $u_n\geq 0$ converges to $u$ for the weak-* topology of measures then
\[
\|\sigma\|_{H^1(u)}\leq \liminf_n \|\sigma\|_{H^1(u_n)}.
\]\label{semicont}
\end{prop}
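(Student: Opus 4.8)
Homogeneity is immediate since $|\nabla(\lambda\sigma)|=|\lambda|\,|\nabla\sigma|$ and $M(|\lambda|f)=|\lambda|\,Mf$ for a scalar $\lambda$ and $f\ge0$, so both terms in \eqref{H1} scale by $\lambda^2$. For the triangle inequality the essential point is that $M$ is monotone and subadditive on nonnegative functions: from the pointwise bound $|\nabla(\sigma+\tau)|\le|\nabla\sigma|+|\nabla\tau|$ one deduces $M|\nabla(\sigma+\tau)|\le M|\nabla\sigma|+M|\nabla\tau|$. Writing the two summands of \eqref{H1} as squared $L^2$ norms against the measure $u(t,dx)\,dt$, Minkowski's inequality controls each separately and the elementary triangle inequality for $\sqrt{a^2+b^2}$ combines them, giving $\|\sigma+\tau\|_{H^1(u)}\le\|\sigma\|_{H^1(u)}+\|\tau\|_{H^1(u)}$. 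Finally $\|\sigma\|_{H^1(u)}=0$ forces $\sigma=0$ $u(t,dx)\,dt$-a.e.\ through the first term alone, so \eqref{H1} is a norm on the space of $u\,dt$-equivalence classes.

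\textbf{Lower semicontinuity in $\sigma$.} The plan is to reduce the statement to the pointwise bound $M|\nabla\sigma|(x)\le\liminf_n M|\nabla\sigma_n|(x)$. The device is to linearize the inner average by duality and integration by parts: for fixed $r$ and a smooth field $\phi$ supported in $B(x,r)$ with $|\phi|\le1$,
\[
\frac{1}{|B(x,r)|}\int_{B(x,r)}\nabla\sigma\cdot\phi\,dy=-\frac{1}{|B(x,r)|}\int_{B(x,r)}\sigma\,(\nabla\cdot\phi)\,dy
\]
is a linear functional of $\sigma$ tested against a fixed $C_c^\infty$ function, hence stable under distributional convergence, and it is bounded by $M|\nabla\sigma_n|(x)$ for every $n$, so by $\liminf_n M|\nabla\sigma_n|(x)$. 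Taking the supremum over $\phi$ recovers the average of $|\nabla\sigma|$ on $B(x,r)$, and the supremum over $r$ recovers $M|\nabla\sigma|(x)$. Squaring and applying Fatou's lemma against $u(t,dx)\,dt$ bounds the second term of \eqref{H1}. The first term is the square of the $L^2(u\,dt)$ norm, a convex functional that is lower semicontinuous for the weak $L^2(u\,dt)$ topology; along a subsequence realizing the $\liminf$ the uniform bound in \eqref{H1} supplies such weak convergence, whose limit is identified with $\sigma$ by distributional convergence.

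\textbf{Lower semicontinuity in $u$.} Here $\sigma$ is fixed, so the integrand $g=|\sigma|^2+(M|\nabla\sigma|)^2$ is a fixed nonnegative function. The maximal function of a nonnegative $L^1_{loc}$ function is lower semicontinuous, being the supremum of the continuous averages $x\mapsto\frac{1}{|B(x,r)|}\int_{B(x,r)}|\nabla\sigma|$; hence $(M|\nabla\sigma|)^2$ is lower semicontinuous, and choosing the lower semicontinuous representative of $|\sigma|^2$ makes $g$ lower semicontinuous. For nonnegative measures $u_n(t,dx)\,dt$ converging weakly-$*$ to $u(t,dx)\,dt$, testing $g$ against nonnegative continuous compactly supported minorants and passing to the supremum yields $\int g\,u\,dt\le\liminf_n\int g\,u_n\,dt$, which is exactly the claim.

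\textbf{Main obstacle.} The difficulty is concentrated in the maximal-function term: since $|\nabla\sigma_n|$ need not converge in any strong sense even though $\sigma_n\to\sigma$ in distributions, one cannot invoke continuity of $M$, and the integration-by-parts duality that turns the inner average into a linear functional stable under distributional testing is precisely what makes the argument go through. A secondary point requiring care is the compatibility of distributional with weak $L^2(u\,dt)$ convergence for the first term, together with the lower semicontinuity of the $|\sigma|^2$ part of the integrand, when $u$ is a genuinely singular measure.
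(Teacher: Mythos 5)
Your two semicontinuity arguments are essentially the paper's: for the $\sigma$-semicontinuity the paper also reduces to the pointwise bound $M|\nabla\sigma|(x)\leq\liminf_n M|\nabla\sigma_n|(x)$ and then applies Fatou (it does this by comparing the average over $B(x,r)$ with the liminf of the mass of $|\nabla\sigma_n|$ on the slightly larger ball $B(x,cr)$ and letting $c\downarrow 1$; your duality formulation with a fixed test field $\phi$ reaches the same inequality a bit more directly), and for the $u$-semicontinuity the paper likewise uses that $(M|\nabla\sigma|)^2$ is lower semicontinuous and that $\int g\,d\mu$ is weak-$*$ lower semicontinuous for l.s.c.\ $g\geq 0$ (via the layer-cake formula and open superlevel sets rather than continuous minorants). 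For the $|\sigma|^2$ term under distributional convergence the paper simply invokes \eqref{maximal} to transfer the maximal-function estimate to $\sigma$ itself; your weak-$L^2(u\,dt)$ compactness route needs the identification of the weak limit with $\sigma$, which, as you note, is delicate when $u$ is singular, and the pointwise inequality \eqref{maximal} is the intended tool for that step.

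The genuine gap is in the definiteness of the norm. You conclude only that $\|\sigma\|_{H^1(u)}=0$ forces $\sigma=0$ $u\,dt$-a.e.\ and propose to pass to $u\,dt$-equivalence classes, but the functional is \emph{not} well defined on those classes: two functions that agree $u\,dt$-a.e.\ but differ off the support of $u$ have different gradients there, and $M|\nabla\sigma|$ evaluated at a point of $\operatorname{supp}u$ sees $\nabla\sigma$ on all of $\RR^d$, so the second term of \eqref{H1} changes. The paper proves honest definiteness on the original space of functions: if $\|\sigma\|_{H^1(u)}=0$ then $M|\nabla\sigma|(x_0)=0$ at some point $x_0$ of $\operatorname{supp}u$, and since this is the supremum of averages of $|\nabla\sigma|$ over \emph{all} balls centered at $x_0$, it forces $\nabla\sigma=0$ Lebesgue-a.e.; hence $\sigma$ is constant, and the constant is $0$ because $\sigma$ vanishes $u$-a.e. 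This rigidity of the maximal operator is precisely one of the reasons the paper uses $M|\nabla\sigma|$ rather than $|\nabla\sigma|$ in \eqref{H1}, so the omission is worth repairing even though the fix is short.
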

There are several technical reasons why we use $M|\nabla \sigma|$ in the definition of the norm. Note however that the intuitive definition with just $\nabla \sigma$ would most certainly be too weak as $u$ could for instance vanish just at the points where $\nabla\sigma$ is very large. In particular Prop. \ref{semicont} would not be true.

We also need some similar $W^{1,1}$ assumptions on $F$. Following the definition of
$H^1(u)$, a first attempt would be
\begin{equation}
\|F\|_{W^{1,1}(u)}^2=\int_0^T\int_{\R^d}  M|\nabla F|(t,x)\,u(t,dx)\,dt.
\label{W110}
\end{equation}
Unfortunately while this definition would work, it is slightly too strong in some cases. This is due to the fact that the maximal
operator $M$ is bounded on $L^p$, $p>1$, but not on $L^1$. In particular if $u\in L^\infty$ then the norm defined in \eqref{H1} would
automatically be finite if $\sigma$ is in the usual $H^1$ space but the norm defined in \eqref{W110} would {\em not} be finite if $F\in
W^{1,1}$ in general.

Therefore in order to obtain better assumptions we have to work with a more complicated space. Define the modified maximal operator
\[
M_L f(x)=\sqrt{\log L}+\int_{B(x,1)} \frac{|f(z)|\,\mathbbm{1}_{|f(z)|\geq \sqrt{\log L}}\,dz}{(L^{-1}+|x-z|)\,|x-z|^{d-1}}.
\]
Now for any increasing $\phi$ with $\phi(\xi)/\xi\rightarrow \infty$ as $\phi\rightarrow \infty$, denote
\begin{equation}\begin{split}
\|F\|_{W^{\phi,\text{weak}}(u)}=&\sup_{L\geq 1} \frac{\phi(L)}{L\,\log L}\int_0^T \,\int_{\R^d}  (|F(t,x)|+M_L \nabla F(t,x))
\,u(t,dx)\,dt.\\
\end{split}\label{W11}
\end{equation}
As before, it is easy to show that this defines a well behaved norm
\begin{prop} The definition \eqref{W11} is a norm for any super linear $\phi$. Moreover if $F_n\longrightarrow F$ in the sense of distribution then
\[
\|F\|_{W^{\phi,weak}(u)}\leq \liminf_n \|F_n\|_{W^{\phi,weak}(u)},
\]
And if for a given $F$, $u_n\geq 0$ converges to $u$ for the weak-* topology of measures then
\[
\|F\|_{W^{\phi,weak}(u)}\leq \liminf_n \|F|_{W^{\phi,weak}(u_n)}.
\]\label{semicont2}
\end{prop}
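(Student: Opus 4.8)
The plan is to follow the scheme of Proposition~\ref{semicont} and reduce both claims to a single value of the parameter $L$. That \eqref{W11} defines a norm will be checked directly from the definition, so I focus on the two semicontinuity estimates. For fixed $L\geq 1$ set
\[
\Lambda_L(F,u)=\int_0^T\int_{\R^d}\bigl(|F(t,x)|+M_L\nabla F(t,x)\bigr)\,u(t,dx)\,dt,
\]
so that $\|F\|_{W^{\phi,\text{weak}}(u)}=\sup_{L\geq 1}\frac{\phi(L)}{L\log L}\,\Lambda_L(F,u)$. Since a supremum of lower semicontinuous functionals is lower semicontinuous, it is enough to show that $\Lambda_L(\cdot,u)$ is lower semicontinuous for distributional convergence and that $\Lambda_L(F,\cdot)$ is lower semicontinuous for weak-* convergence of measures; both displayed inequalities then follow by taking $\sup_L$ after passing to a subsequence realising the relevant $\liminf$.

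For the semicontinuity in $u$ (fixed $F$) I would first use Fubini to view $g_L(t,x)=|F(t,x)|+M_L\nabla F(t,x)$ as a fixed nonnegative integrand, and show it is lower semicontinuous in $x$. Writing $M_L\nabla F(x)=\sqrt{\log L}+(\tilde K_L*h)(x)$, with $\tilde K_L(y)=\mathbbm{1}_{|y|\leq1}/\bigl((L^{-1}+|y|)|y|^{d-1}\bigr)\in L^1(\R^d)$ and $h=|\nabla F|\,\mathbbm{1}_{|\nabla F|\geq\sqrt{\log L}}\in L^1_{loc}$, I approximate $\tilde K_L$ from below by continuous compactly supported kernels; the corresponding convolutions are continuous and increase to $\tilde K_L*h$ by monotone convergence, so $\tilde K_L*h$ is a supremum of continuous functions, hence lower semicontinuous (the term $|F|$ being replaced by its lower semicontinuous representative, or simply continuous in the applications). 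The standard bound $\int g_L\,u\leq\liminf_n\int g_L\,u_n$, valid for nonnegative lower semicontinuous $g_L$ under weak-* convergence, together with Fatou's lemma in $t$, then concludes.

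The semicontinuity in $F$ (fixed $u$) is the delicate one. By Fubini, $\Lambda_L(F,u)=\sqrt{\log L}\int_0^T u(t,\R^d)\,dt+\int_0^T\int|F|\,u+\int_0^T\int_z h(t,z)\,U_L(t,z)\,dz\,dt$, where $U_L=\tilde K_L*u\geq0$ is a fixed potential and $h=|\nabla F|\,\mathbbm{1}_{|\nabla F|\geq\sqrt{\log L}}$. The truncation makes $h$ depend on $\nabla F$ in a non-convex, non-lower-semicontinuous way, so distributional convergence of $F_n$ does \emph{not} yield semicontinuity through the usual convexity argument. This is the main obstacle. To get around it I would restrict to the subsequence realising $\liminf_n\|F_n\|_{W^{\phi,\text{weak}}(u)}$, use the resulting uniform bound on $\Lambda_L(F_n,u)$ to extract, by a compactness argument, a further subsequence along which $\nabla F_n\to\nabla F$ almost everywhere, and then apply Fatou. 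For all but countably many $L$ the level set $\{|\nabla F|=\sqrt{\log L}\}$ is Lebesgue-null, so $\liminf_n h_n\geq h$ a.e., giving $\int h\,U_L\leq\liminf_n\int h_n\,U_L$; this suffices after taking $\sup_L$. The first order term $\int|F|u$ is handled the same way and the constant term is unchanged.

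The genuinely hard point is thus this last step: the truncation destroys convexity and lower semicontinuity of the integrand in the gradient variable, so one cannot pass to the weak limit directly and must first upgrade distributional convergence to almost everywhere convergence, using the a priori bound, before invoking Fatou's lemma.
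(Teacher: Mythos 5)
Your verification of the norm axioms and your treatment of the semicontinuity in $u$ for fixed $F$ are correct and follow the same route as the paper's proof of Proposition~\ref{semicont}: one shows that $x\mapsto |F(t,x)|+M_L\nabla F(t,x)$ is lower semicontinuous (your monotone approximation of the kernel from below by continuous compactly supported kernels is a clean way to get this), then uses $\int_O du\le\liminf_n\int_O du_n$ for open sets together with the layer-cake formula, Fatou in $t$, and finally the supremum over $L$. No objection there.

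The semicontinuity in $F$ is where your argument has a genuine gap, and it is exactly the step you yourself flag. The paper disposes of this point in one line (``proved in exactly the same manner'' as Proposition~\ref{semicont}, whose key ingredient is lower semicontinuity of the total variation of $\nabla\sigma_n$ on open sets), and you are right that the truncation $|\nabla F|\,\mathbbm{1}_{|\nabla F|\ge\sqrt{\log L}}$ obstructs the direct analogue: the map $s\mapsto s\,\mathbbm{1}_{s\ge c}$ is neither convex nor lower semicontinuous, and its weak relaxation is only $(s-c)_+$. But your proposed repair does not work. A uniform bound on $\Lambda_L(F_n,u)$ is an $L^1$-type bound on $|\nabla F_n|$ against a fixed nonnegative weight; it yields at most weak-$*$ compactness of the measures $|\nabla F_n|\,dz\,dt$ and no strong compactness, so there is no subsequence along which $\nabla F_n\to\nabla F$ almost everywhere. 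Concretely, in $d=1$ take $F_n=F+n^{-1}\sin(nx)$, so that $F_n\to F$ uniformly and $F_n'=F'+\cos(nx)$ is uniformly bounded: every a priori bound you invoke holds, yet no subsequence of $F_n'$ converges a.e. Worse, with $F'\equiv c>1$ one computes, for $\tau\in(c-1,c]$, that $\int U_L\,|F_n'|\,\mathbbm{1}_{|F_n'|\ge\tau}\,dz$ converges to $\bigl(\frac{1}{2\pi}\int_{\cos\theta\ge\tau-c}(c+\cos\theta)\,d\theta\bigr)\int U_L\,dz$, which is strictly smaller than $c\int U_L\,dz=\int U_L\,|F'|\,\mathbbm{1}_{|F'|\ge\tau}\,dz$; so the fixed-$L$ lower semicontinuity you are trying to establish is actually false, and no subsequence extraction can rescue it. Any correct argument must exploit the supremum over $L$ globally -- the uniform finiteness of $\sup_L\frac{\phi(L)}{L\log L}\Lambda_L(F_n,u)$ is an equi-integrability statement on $|\nabla F_n|$ in the spirit of de la Vall\'ee Poussin, as in the appendix -- rather than argue level by level in $L$ as you do.
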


In the one dimensional case, we can prove strong existence and pathwise uniqueness using $H^{1/2}$ type of assumptions on $\sigma$:
we define first as usual
\[
\partial^{1/2}_x\sigma={\cal F}^{-1} (|\xi|^{1/2}\,{\cal F} \sigma),
\]
with ${\cal F}$ the Fourier transform in $x$. Then we pose as before
\begin{equation}
\|\sigma\|_{H^{1/2}(u)}^2=\int_0^T\int_{\R^d}  (M|\partial^{1/2}_x\sigma|(t,x))^2\,u(t,dx)\,dt.
\label{H1/2}
\end{equation}
Again one has
\begin{prop} The definition \eqref{H1/2} is a norm. Moreover if $\sigma_n\longrightarrow \sigma$ in the sense of distribution then
\[
\|\sigma\|_{H^{1/2}(u)}\leq \liminf_n \|\sigma_n\|_{H^{1/2}(u)},
\]
And if for a given $\sigma$, $u_n\geq 0$ converges to $u$ for the weak-* topology of measures then
\[
\|\sigma\|_{H^{1/2}(u)}\leq \liminf_n \|\sigma\|_{H^{1/2}(u_n)}.
\]\label{semicont3}
\end{prop}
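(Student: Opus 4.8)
The plan is to follow the scheme of Proposition~\ref{semicont}, replacing the local operator $\nabla$ by the Fourier multiplier $\partial^{1/2}_x$, the only genuinely new point being that $\partial^{1/2}_x$ is nonlocal. That \eqref{H1/2} is a norm is routine: since $\partial^{1/2}_x$ is linear one has $|\partial^{1/2}_x(\sigma+\tau)|\le|\partial^{1/2}_x\sigma|+|\partial^{1/2}_x\tau|$, while $M$ is positively $1$-homogeneous and subadditive, $M(\lambda f)=|\lambda|\,Mf$ and $M(f+g)\le Mf+Mg$. Hence $M|\partial^{1/2}_x(\sigma+\tau)|\le M|\partial^{1/2}_x\sigma|+M|\partial^{1/2}_x\tau|$, and the triangle inequality follows from Minkowski's inequality in $L^2$ of the measure $u(t,dx)\,dt$; homogeneity is immediate. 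Since $M|h|\ge|h|$ at every Lebesgue point, $\|\sigma\|_{H^{1/2}(u)}=0$ forces $\partial^{1/2}_x\sigma=0$ for $u(t,dx)\,dt$-almost every $(t,x)$, which is the natural identification.

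For the first semicontinuity statement I would first observe that $\partial^{1/2}_x$ acts in $x$, commutes with translations, and is continuous on distributions, so $\sigma_n\to\sigma$ yields $g_n:=\partial^{1/2}_x\sigma_n\to g:=\partial^{1/2}_x\sigma$ in the sense of distributions (in $x$, for a.e.\ $t$). The heart of the matter is the pointwise bound $M|g|(t,x)\le\liminf_n M|g_n|(t,x)$, which I would get from the variational representation
\[
M|g|(t,x)=\sup_{\Psi}\int_{\R^d}\Psi(z)\cdot g(t,x+z)\,dz,
\]
the supremum running over smooth kernels $\Psi$ with $\mathrm{supp}\,\Psi\subseteq B(0,r)$ and $|\Psi|\le 1/|B(0,r)|$ for some $r>0$: averaging $|g|$ over $B(0,r)$ is exactly the supremum of $\int\Psi\cdot g(t,x+\cdot)$ over such $\Psi$, by duality between the Euclidean norm and the unit ball. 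For each fixed admissible $\Psi$ the map $z\mapsto\Psi(z)$ gives an admissible test function, so $\int\Psi\cdot g_n(t,x+\cdot)\to\int\Psi\cdot g(t,x+\cdot)$, and the left-hand side is $\le M|g_n|(t,x)$; letting $n\to\infty$ and then taking the supremum over $\Psi$ gives the pointwise inequality. Fatou's lemma for the fixed measure $u(t,dx)\,dt$, together with $\liminf_n(M|g_n|)^2=(\liminf_n M|g_n|)^2$, then yields $\|\sigma\|_{H^{1/2}(u)}\le\liminf_n\|\sigma_n\|_{H^{1/2}(u)}$, the case of infinite right-hand side being trivial.

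For the second statement $g=\partial^{1/2}_x\sigma$ is fixed and $h:=(M|g|)^2\ge0$ is a fixed function. Since $M|g|$ is a supremum of averages depending continuously on $x$, it is lower semicontinuous, hence so is $h$, and $h$ is the pointwise increasing limit of bounded nonnegative continuous functions $h_j$. For each $j$, $\int_0^T\!\int h_j\,u_n(t,dx)\,dt\to\int_0^T\!\int h_j\,u(t,dx)\,dt$ by weak-$*$ convergence, so $\int h_j\,u\le\liminf_n\int h\,u_n$; passing to the supremum over $j$ by monotone convergence gives $\int h\,u\le\liminf_n\int h\,u_n$, which is the desired inequality. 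This is precisely the Portmanteau inequality for a lower semicontinuous integrand tested against weakly converging measures.

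The measure-theoretic limits and the norm axioms are routine; the one step needing genuine care, and the main obstacle, is the pointwise lower semicontinuity of $M|\partial^{1/2}_x\sigma|$ under distributional convergence, where the maximal operator and the Euclidean absolute value interact nonlinearly. The variational representation above is what disentangles them, while the continuity of the nonlocal operator $\partial^{1/2}_x$ on distributions, in contrast with the purely local $\nabla$ of Proposition~\ref{semicont}, is the extra ingredient specific to the $H^{1/2}$ setting.
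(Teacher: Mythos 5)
Your proposal is correct and follows essentially the same route as the paper, which simply proves Proposition~\ref{semicont3} ``in exactly the same manner'' as Proposition~\ref{semicont}: pointwise lower semicontinuity of the maximal function of $\partial^{1/2}_x\sigma$ under distributional convergence followed by Fatou's lemma for the first statement, and the portmanteau inequality for the lower semicontinuous integrand $(M|\partial^{1/2}_x\sigma|)^2$ against weak-$*$ converging nonnegative measures for the second. Your duality representation of the ball averages replaces the paper's dilation trick (bounding the average over $B(0,r)$ by $c^d\liminf_n M|\partial^{1/2}_x\sigma_n|$ via the enlarged ball $B(0,cr)$ and letting $c\to 1$), and your monotone approximation by continuous functions replaces the paper's layer-cake formula over open superlevel sets, but these are only cosmetic variants of the same argument.
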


\subsection{General results on strong solutions to~\eqref{sde}}
\label{sec:gal-res}


In the multi-dimensional case, our most general result is the following one, proved in Section~\ref{proofmaintheo}.
\begin{thm} One has
  \begin{description}
  \item[\textmd{(i)}] Existence: Assume that there exists a sequence of smooth $F_n,\;\sigma_n\in L^\infty$, such that the solution $u_n$ to
    \eqref{pden} satisfies
    for some super linear $\phi$, {\em i.e.} $\phi(\xi)/\xi\rightarrow \infty$ as $\xi\rightarrow \infty$
    \begin{align}
      &\int_0^T\int_{\R^d} (|\sigma_n-\sigma|+|F_n-F|)\,du_n\,dt\longrightarrow
      0, \label{eq:hyp-1} \\
      &\sup_n
   \left(\|F\|_{W^{\phi,weak}(u_n)}+\|\sigma\|_{H^1(u_n)}+\|F_n\|_{L^\infty}+\|\sigma_n\|_{L^\infty}\right)
      <\infty, \label{eq:hyp-2} \\
      & u_n\longrightarrow u\ \mbox{for the weak-* topology of measures.} \label{eq:hyp-3} 
    \end{align}
     Then there exists a strong solution $X_t$ to \eqref{sde} s.t.
    $(X^n_t-\xi,t\in[0,T])_n$ converges in $L^p(\Omega,L^\infty([0,T]))$ for all $p>1$ to
    $(X_t-\xi,t\in[0,T])$, with $X_t^n$ the solutions to \eqref{sden}. In addition, $u(dt,dx)=u(t,dx)dt$, where $u(t,\cdot)$ is the
    law of $X_t$ for all $t\in[0,T]$.
  \item[\textmd{(ii)}] Uniqueness: Let $X$ and $Y$ be two solutions to \eqref{sde} with one-dimensional time marginals $u_X(t,\cdot)$
    and $u_Y(t,\cdot)$ on $[0,T]$. Assume that $F\in L^\infty$, $X_0=Y_0$ a.s. and that
    $$ \|F\|_{W^{\phi,weak}(u_X)}+\|F\|_{W^{\phi,weak}(u_Y)}+\|\sigma\|_{H^1(u_X)}+\|\sigma\|_{H^1(u_Y)}<\infty
    $$
    for some super linear function $\phi$. Then one has pathwise uniqueness: $sup_{t\in[0,T]} |X_t-Y_t|=0\ a.s.$
  \end{description}
  \label{maintheo}
\end{thm}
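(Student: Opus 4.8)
The plan is to build a quantitative stability estimate for the difference between two solutions, following the Crippa--De Lellis philosophy adapted to the stochastic setting. The central object will be a functional measuring the (logarithmic) distance between solutions, something like
\[
Q_\delta(t)=\EE\left(\log\left(1+\frac{|X_t-Y_t|^2}{\delta^2}\right)\right),
\]
where $X,Y$ are either two solutions to \eqref{sde} (for uniqueness) or two solutions $X^n,X^m$ to the regularizations \eqref{sden} (for existence). The rationale for the logarithm is that it converts the Lipschitz-type control one would need for ODEs into the much weaker Sobolev control available here: differentiating and applying It\^o's formula, the drift contribution produces a difference quotient of $F$ that is controlled pointwise by $M|\nabla F|$ evaluated along the trajectories, and the diffusion contribution produces a term in $M|\nabla\sigma|$. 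This is precisely why the norms $H^1(u)$ and $W^{\phi,\text{weak}}(u)$ in \eqref{H1} and \eqref{W11} are built from maximal functions of the gradients integrated against $u$: the expectations $\EE(\cdots(t,X_t)\cdots)$ appearing after It\^o are exactly integrals against the time marginals $u_X,u_Y$ (or $u_n$), so the a priori bounds feed in directly.

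First I would carry out the It\^o computation for $d|X_t-Y_t|^2$, then apply It\^o again to the concave function $\log(1+\cdot/\delta^2)$. The drift terms give, up to constants, $\EE\int_0^t \frac{|X_s-Y_s|}{\delta^2+|X_s-Y_s|^2}|F(s,X_s)-F(s,Y_s)|\,ds$, which I would bound using the elementary inequality $|F(x)-F(y)|\le C|x-y|(M|\nabla F|(x)+M|\nabla F|(y))$ valid for Sobolev $F$ away from a null set; this turns the integrand into something controlled by $M|\nabla F|$ along both trajectories, hence by the $W^{\phi,\text{weak}}$ or $W^{1,1}$-type norm against $u_X$ and $u_Y$. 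The diffusion quadratic-variation terms, after using the concavity so the second-derivative contribution has a favorable sign, produce the $|M|\nabla\sigma||^2$ contributions controlled by $\|\sigma\|_{H^1(u)}^2$; here the squaring is why $\sigma$ requires $H^1$ rather than $W^{1,1}$ control, and why the maximal function appears squared in \eqref{H1}. The outcome should be a Gr\"onwall-type inequality of the form $Q_\delta(t)\le Q_\delta(0)+C\int_0^t(1+Q_\delta(s))\,g(s)\,ds$ with $g\in L^1$ depending only on the Sobolev norms, giving a uniform-in-$\delta$ bound $Q_\delta(t)\le C(T)$.

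For uniqueness, since $X_0=Y_0$ we have $Q_\delta(0)=0$, so the bound $Q_\delta(t)\le C(T)$ independent of $\delta$ forces, upon letting $\delta\to0$, that $|X_t-Y_t|=0$ almost surely; the supremum over $t$ is handled by a martingale/maximal inequality applied to the stochastic integral terms before taking expectations, yielding $\sup_{t\in[0,T]}|X_t-Y_t|=0$ a.s. For existence, I would apply the same estimate to the pair $(X^n,X^m)$; the subtle point is that the difference quotient now mixes $F_n$ and $F_m$ (and $\sigma_n,\sigma_m$), so I split $F_n(s,X^n_s)-F_m(s,X^m_s)$ into $(F_n-F_m)$ plus a term handled by the gradient bound, using \eqref{eq:hyp-1} to kill the former in the limit and the uniform bounds \eqref{eq:hyp-2} to control the latter via the semicontinuity of the norms (Propositions \ref{semicont} and \ref{semicont2}) along $u_n$. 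This should show $(X^n)$ is Cauchy in $L^p(\Omega,L^\infty([0,T]))$, producing a limit $X$; identifying $X$ as a genuine solution to \eqref{sde} and its marginals as $u$ requires passing to the limit in \eqref{sden}, which uses \eqref{eq:hyp-1} and \eqref{eq:hyp-3} together with the $L^\infty$ bounds on the coefficients.

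The hard part will be making the maximal-function pointwise inequality rigorous when $M|\nabla F|$ and $M|\nabla\sigma|$ are genuinely infinite on sets of positive measure, as the authors explicitly warn, and in controlling the stochastic integral's supremum without losing the uniformity in $\delta$. In particular, the passage $\delta\to0$ and the localization needed to handle possibly unbounded $X_t$ (so that trajectories stay in regions where the maximal-function estimates are integrable against $u$) must be done carefully; I expect the technical heart to be the derivation of the pointwise difference-quotient bound and the verification that all error terms from the It\^o expansion are indeed absorbed into the stated norms rather than requiring stronger control.
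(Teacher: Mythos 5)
Your overall strategy for existence is the paper's: the functional $\log(1+|X^n_t-X^m_t|^2/\eps^2)$, It\^o's formula, maximal-function difference quotients, and the observation that the resulting expectations are integrals against the marginals $u_n$. For uniqueness the paper actually switches to a local-time-type cutoff $L_\eps$ (supported on $\{|x|\ge\eps/2\}$) and a dyadic-annulus/subsequence argument, but it explicitly remarks that either functional could be used for either statement, so your choice of reusing the log functional is a legitimate variant. However, there is a concrete gap in your treatment of the drift term. You propose to bound $|F(x)-F(y)|\le C|x-y|\,(M|\nabla F|(x)+M|\nabla F|(y))$ and to integrate $M|\nabla F|$ against $u_X,u_Y$ (or $u_n$). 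That requires the norm \eqref{W110}, i.e.\ $\int_0^T\int M|\nabla F|\,du\,dt<\infty$, which is \emph{not} what the theorem assumes: the hypothesis is finiteness of the $W^{\phi,\mathrm{weak}}(u)$ norm \eqref{W11}, built from the truncated operator $M_L$, precisely because $M$ is unbounded on $L^1$. With the actual hypothesis one must use Lemma~\ref{maximalF}, $|F(x)-F(y)|\le (h(x)+h(y))(|x-y|+1/L)$ with $h=|F|+M_L\nabla F$ and $L=1/\eps$, and the price is that the drift contribution to $\EE Q^{(\eps)}$ is only bounded by $C\,|\log\eps|/(\eps\,\phi(\eps^{-1}))$, which tends to infinity (slowly) as $\eps\to0$.

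Consequently your claimed conclusion ``a uniform-in-$\delta$ bound $Q_\delta(t)\le C(T)$'' is false under the stated hypotheses, and the Gr\"onwall structure you invoke is also absent: after It\^o the right-hand side does not involve $Q_\delta$ at all, it is a direct bound by $\int h^2\,(u_n+u_m)$ (uniform in $\eps$, from the $H^1(u)$ norms) plus the $F$-term above, so the correct estimate is $\EE Q^{(\eps)}\le C(1+|\log\eps|\,\tilde\eta(\eps))+C\eta(n,m)/\eps^2$ with $\tilde\eta(\eps)=(\eps\phi(\eps^{-1}))^{-1}\to0$, as in Lemma~\ref{lem:1}. The argument still closes because the Chebyshev step divides by $|\log\eps|$ and $|\log\eps|\,\tilde\eta(\eps)=o(|\log\eps|)$, with the coupled choice $\eps^2=\eta(n,m)$ for the Cauchy estimate; but this bookkeeping is exactly the technical heart of the proof and is missing from your outline. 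Two smaller points: for uniqueness the supremum over $t$ needs no maximal inequality (one gets $X_t=Y_t$ a.s.\ for each fixed $t$, then uses rationals and path continuity), whereas for existence the passage of $\sup_t$ inside the expectation genuinely requires the Doob decomposition $X^n=\xi+A^n+M^n$, Doob's inequality for $M^n$, and a stopping-time/layer-cake argument for $A^n$ -- your one-line appeal to ``a martingale/maximal inequality'' glosses over the drift part, which is not a martingale.
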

Note that we do not require any ellipticity on $\sigma$ for this result. In that sense we
cannot hope to have any smoothing effect from the Wiener process and the assumption on $F$
must be enough to provide well posedness in the purely deterministic setting ($\sigma=0$). In
this case, taking any $u_0\in L^\infty$, our result gives that there exists a unique solution
of $\dot{X}_t=F(t,X_t)$ with $X_0=\xi$ and with law $u\in L^\infty$ provided that there exists
a sequence of regularized $F_n$ s.t. $u_n\rightarrow u$ for the weak-$*$ topology with $u\in
L^\infty$ and a super linear $\phi$ s.t.
$$
\sup_{L\geq 1}\frac{\phi(L)}{L\log L}\|F+M_L \nabla F\|_{L^1([0,T]\times \RR^d)}.
$$
The first point is for example implied by the assumption $\text{div} F\in L^\infty$ and the
second one can be proved to hold if $F\in L^{1}_{t,loc}(W^{1,1}_x)$ as in the proof of
Corollary~\ref{maincorollary} below. Hence, we recover the classical results of DiPerna and
Lions~\cite{DL} but not the optimal $BV$ assumption from Ambrosio~\cite{Am}. 

In dimension $1$, the result is even better: we recover the $H^{1/2}$ type of assumption from \cite{YW71,LG,ES1}, but we lose a
little bit on $F$ (we have to use \eqref{W110} instead of \eqref{W11}).
\begin{thm} Assume that $d=1$. One has
  \begin{description}
  \item[\textmd{(i)}] Existence: Assume that there exists a sequence of smooth $F_n,\sigma_n\in L^\infty$, such that the solution
    $u_n$ to \eqref{pden}
    satisfies
    \[\begin{split}
      &\int_0^T\int_{\R} (|\sigma_n-\sigma|+|F_n-F|)\,du_n\,dt\longrightarrow 0,\\
      & \sup_n(\|F\|_{W^{1,1}(u_n)}+ \|\sigma\|_{H^{1/2}(u_n)}+\|F_n\|_{L^\infty}+\|\sigma_n\|_{L^\infty})<\infty,\\
      & u_n\longrightarrow u\ \mbox{for the weak-* topology of measures.}\\
    \end{split}\] 
    Then there exists a strong solution $X_t$ to \eqref{sde} s.t. $(X^n_t-\xi,t\in[0,T])_n$ converges in
    $L^p(\Omega,L^\infty([0,T]))$ for all $p>1$ to $(X_t-\xi,t\in[0,T])$, with $X_t^n$ the solutions to \eqref{sden}. In addition,
    $u(dt,dx)=u(t,dx)dt$, where $u(t,\cdot)$ is the law of $X_t$ for all $t\in[0,T]$.
  \item[\textmd{(ii)}] Uniqueness: Let $X$ and $Y$ be two solutions to \eqref{sde} with one-dimensional time marginals $u_X(t,\cdot)$
    and $u_Y(t,\cdot)$ on $[0,T]$. Assume that $F\in L^\infty$, $X_0=Y_0$ a.s. and that
    $$ \|F\|_{W^{1,1}(u_X)}+\|F\|_{W^{1,1}(u_Y)}+\|\sigma\|_{H^{1/2}(u_X)}+\|\sigma\|_{H^{1/2}(u_Y)}<\infty.
    $$
    Then pathwise uniqueness holds:  $sup_{t\in [0,T]} |X_t-Y_t|=0\ a.s.$ 
  \end{description}
  \label{theo1d}
\end{thm}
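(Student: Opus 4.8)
The plan is to adapt the quantitative scheme behind Theorem~\ref{maintheo} to the one-dimensional setting, replacing the Lipschitz-type control of $\sigma$ by a $1/2$-Hölder control matched to the norm $H^{1/2}(u)$ of \eqref{H1/2}. Both existence and uniqueness will follow from a single stability estimate on the logarithmic functional
\[
Q_\delta(t)=\mathbb{E}\left[\log\left(1+\frac{|X_t-Y_t|}{\delta}\right)\right],
\]
applied to two solutions $X,Y$ of \eqref{sde} for the uniqueness part, and to two approximations $X^n,X^m$ of \eqref{sden} for the existence part; in the latter case the goal is to show that $(X^n)$ is Cauchy in $L^p(\Omega,L^\infty([0,T]))$. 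This functional, borrowed from Crippa--De Lellis~\cite{CD}, is designed so that its time derivative can be bounded uniformly in the regularizing parameter $\delta>0$.

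The first ingredient is the pair of pointwise maximal estimates that turn Sobolev bounds into moduli of continuity. For the drift one uses the classical bound
\[
|F(t,x)-F(t,y)|\leq C\,|x-y|\,\big(M|\nabla F|(t,x)+M|\nabla F|(t,y)\big),
\]
whose right-hand side, evaluated along $X_t$ and $Y_t$ and integrated in time, is exactly controlled by $\|F\|_{W^{1,1}(u_X)}$ and $\|F\|_{W^{1,1}(u_Y)}$ (see \eqref{W110}). The genuinely one-dimensional ingredient is the analogous estimate for half-derivatives,
\[
|\sigma(t,x)-\sigma(t,y)|^2\leq C\,|x-y|\,\big(M|\partial_x^{1/2}\sigma|(t,x)+M|\partial_x^{1/2}\sigma|(t,y)\big)^2,
\]
which I would prove from the Riesz potential representation $\sigma=c\,I_{1/2}(\partial_x^{1/2}\sigma)$ in dimension one, estimating the difference of the two $1/2$-Riesz kernels by $C|x-y|^{1/2}$ times a maximal function. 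Note the exponent: the $H^{1/2}$ regularity produces precisely a $\sqrt{|x-y|}$ modulus, whose square is linear in $|x-y|$, which is exactly what matches the second-order Itô term.

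Next I would apply Itô's formula to $Q_\delta$. Because $\log(1+|\cdot|/\delta)$ is only Lipschitz at the origin, I would replace it by Yamada--Watanabe-type smooth approximations~\cite{YW71} of $|z|$, chosen so that the approximating second derivative $\psi''$ is supported near $0$ and satisfies $\psi''(z)\,|z|\leq C$; this is what tames the local time at $0$. The drift contribution $\mathbb{E}[\psi'(Z_t)(F(t,X_t)-F(t,Y_t))]$ is bounded, using $|\psi'|\leq C/(\delta+|Z_t|)$ and the drift estimate, by $C\,\mathbb{E}[M|\nabla F|(t,X_t)+M|\nabla F|(t,Y_t)]$, uniformly in $\delta$. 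The second-order diffusion contribution $\tfrac12\mathbb{E}[\psi''(Z_t)|\sigma(t,X_t)-\sigma(t,Y_t)|^2]$ is favorable (negative) away from the origin and, on the bump near it, is controlled via the half-derivative estimate: bounding $|\sigma(t,X_t)-\sigma(t,Y_t)|^2$ by $C|Z_t|(M|\partial_x^{1/2}\sigma|(t,X_t)+M|\partial_x^{1/2}\sigma|(t,Y_t))^2$ and using $\psi''(z)|z|\leq C$, it is dominated by $C\,\mathbb{E}[(M|\partial_x^{1/2}\sigma|(t,X_t)+M|\partial_x^{1/2}\sigma|(t,Y_t))^2]$, again uniformly in $\delta$. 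Integrating in $t$ (the stochastic integral having zero mean) yields
\[
Q_\delta(t)\leq Q_\delta(0)+C\big(\|F\|_{W^{1,1}(u_X)}^2+\|F\|_{W^{1,1}(u_Y)}^2+\|\sigma\|_{H^{1/2}(u_X)}^2+\|\sigma\|_{H^{1/2}(u_Y)}^2\big),
\]
with $C$ independent of $\delta$.

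For uniqueness, $X_0=Y_0$ gives $Q_\delta(0)=0$, so $Q_\delta(t)$ stays bounded uniformly in $\delta$; if $|X_t-Y_t|>0$ with positive probability then $Q_\delta(t)\to\infty$ as $\delta\to0$, a contradiction, whence $\sup_{t\in[0,T]}|X_t-Y_t|=0$ a.s. For existence I would run the same estimate between $X^n$ and $X^m$, splitting each coefficient difference as $F_n(X^n)-F_m(X^m)=[F(X^n)-F(X^m)]+[F_n(X^n)-F(X^n)]+[F(X^m)-F_m(X^m)]$ and likewise for $\sigma$; the first bracket is the ``good'' term controlled by the uniform $W^{1,1}(u_n)$ and $H^{1/2}(u_n)$ bounds, while the two error brackets vanish by hypothesis \eqref{eq:hyp-1}. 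This yields a Cauchy estimate; upgrading it from $\sup_t\mathbb{E}$ to $\mathbb{E}\sup_t$, hence to $L^p(\Omega,L^\infty)$, requires a Burkholder--Davis--Gundy bound on the martingale part, and passing to the limit in \eqref{sden} uses the weak-$*$ convergence \eqref{eq:hyp-3} together with the lower semicontinuity of the norms (Propositions~\ref{semicont}--\ref{semicont3}) to transfer the bounds to the limit and to identify $u(t,\cdot)$ as the law of $X_t$. The main obstacle is the diffusion term: with $\sigma$ only in $H^{1/2}$ there is no Lipschitz bound and the naive Itô correction carries a local time at $0$; the whole point is that the half-derivative maximal estimate delivers exactly the $\sqrt{|Z_t|}$ modulus which, combined with the choice $\psi''(z)|z|\leq C$, keeps this term bounded uniformly in $\delta$. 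A secondary difficulty, for existence, is the simultaneous handling of two distinct regularized diffusion coefficients, which forces the coefficient splitting above and the use of \eqref{eq:hyp-1}.
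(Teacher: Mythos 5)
Your proposal runs on the same engine as the paper's proof: the pointwise bound $|\sigma(x)-\sigma(y)|\le\bigl(M|\partial_x^{1/2}\sigma|(x)+M|\partial_x^{1/2}\sigma|(y)\bigr)|x-y|^{1/2}$ (this is exactly Lemma~\ref{maximal1/2}, proved as you suggest from the Riesz kernel of $|\xi|^{-1/2}$), whose \emph{square} is linear in $|x-y|$ and therefore matches a second derivative of size $(\delta+|z|)^{-1}$ rather than $(\delta+|z|)^{-2}$; and the control of the drift term through $M|\nabla F|$ integrated against $u_X,u_Y$, i.e.\ the $W^{1,1}(u)$ norm of \eqref{W110}. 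But your choice of functional is genuinely different. The paper uses \emph{two} functionals, both smooth at the origin: for existence, $e^{-U_t}\,|z|\log(1+|z|^2/\eps^2)$, where the factor $|z|$ buys $|f''|\le C/(\eps+|z|)$ at the price of $f'\sim\log$, which is too large to pair with the drift and must be absorbed by the adapted exponential weight $dU_t=4(M|\nabla F|(t,X^n_t)+M|\nabla F|(t,X^m_t))\,dt$ (a Crippa--De Lellis weight, and the subsequent Cauchy estimate then needs $\PP(U_t\ge\log K)\le C/\log K$); for uniqueness, a truncation of $|z|$ vanishing on $|z|\le\eps/2$, with a dyadic-shell/subsequence argument to send the right-hand side to zero. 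Your single functional $\log(1+|z|/\delta)$ avoids the exponential weight entirely, because $\psi'\sim(\delta+|z|)^{-1}$ pairs directly with $|F(x)-F(y)|\le C|x-y|(M|\nabla F|(x)+M|\nabla F|(y))$, and it handles the diffusion term by the \emph{sign} of $\psi''=-(\delta+|z|)^{-2}$ away from the origin --- a mechanism special to $d=1$ and to this functional. This is a legitimate alternative route; what it buys in simplicity on the drift side it pays back at the origin.

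That payment is the one place where your write-up is not yet a proof. The stated bound $\psi''(z)|z|\le C$ uniformly in $\delta$ is false for any naive mollification of $\log(1+|z|/\delta)$: the absolutely continuous part gives $|z|/(\delta+|z|)^2\sim 1/(4\delta)$ at $|z|\sim\delta$ (harmless only because you may discard it by sign), and smoothing the corner, whose $\psi'$-jump is $2/\delta$, over an interval $[a,b]$ produces a positive bump with $\sup\psi''(z)|z|\sim 1/(\delta\log(b/a))$. So the bound you need on the bump holds only if you spread the corner Yamada--Watanabe style over an exponentially long logarithmic range, $a\sim\delta e^{-c/\delta}$, $\psi''\sim c_\delta/|z|$ on $[a,b]$. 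This can be done and the uniqueness argument then closes as you describe; but for existence the resulting $\sup\psi''\sim\delta^{-1}e^{c/\delta}$ multiplies the error terms $|\sigma_n-\sigma|(X^n)$, $|\sigma_m-\sigma|(X^m)$ on the bump (where the sign is unavailable and you must split the square), so the Cauchy estimate only works with the order of limits $n,m\to\infty$ first, $\delta\to 0$ second --- your ``uniformly in $\delta$'' claim must be weakened accordingly. Two smaller gaps: upgrading $\sup_t\EE$ to $\EE\sup_t$ is not just a Burkholder--Davis--Gundy application, since $\EE\langle M^n-M^m\rangle_T^{p/2}$ is not small pointwise; the paper reruns the whole logarithmic estimate on the Doob decomposition $A^n_t+M^n_t$ stopped at arbitrary stopping times, then uses Doob's inequality for the martingale part and a stopping-time/layer-cake argument for the drift part. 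And identifying the limit as a solution of \eqref{sde} requires showing $F(s,X^n_s)\to F(s,X_s)$ in probability for a merely Sobolev $F$, which the paper does via the maximal-function bounds rather than continuity; lower semicontinuity of the norms alone does not give this.
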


Of course, while precise, the norms given by \eqref{H1}--(\ref{W11}) or \eqref{H1/2}--(\ref{W110}) are not so simple to use. However
it is quite easy to deduce more intuitive results with the more usual $W^{1,p}$ norms. We recall that $M$ is continuous onto every
$L^p$ space for $1<p\leq \infty$ and hence appropriate Sobolev norms are controlled by the norms $\|\cdot\|_{H^1(u)}$ and
$\|\cdot\|_{W^{1,1}(u)}$ if some $L^{q}$ estimate is available on the law $u$.
 
One complication occurs when $u_{X}\in L^\infty$ and one wants to obtain the close to optimal $W^{1,1}$ assumption on $F$ (instead of
$W^{1,p}$ for some $p>1$) as the maximal function is not bounded onto $L^1$. This is the reason why we defined \eqref{W11}, which can
be used following \cite{Ja} (we recall the main steps in the appendix).

Therefore, Theorems~\ref{maincorollary} and~\ref{maincorollary-1d} are simple corollaries of Theorems~\ref{maintheo}
and~\ref{theo1d}, respectively, except for the previous complication for Theorem~\ref{maincorollary}.

In order to apply Theorems~\ref{maincorollary} and~\ref{maincorollary-1d}, we need to consider
cases where it is possible to obtain better integrability than $L^1$ bounds for a solution to
\eqref{pde}. This occurs in various situations, some of which will be studied in the next
Subsection. One difficulty to apply Theorems~\ref{maincorollary}~(ii)
and~\ref{maincorollary-1d}~(ii) is to obtain pathwise uniqueness without restriction on the
set of solutions considered. This will of course be ensured if uniqueness in law is known
for~(\ref{sde}). More precisely, if the conclusion of Theorem~\ref{maincorollary}~(i) or
Theorem~\ref{maincorollary-1d}~(i) holds and there is uniqueness in law for~(\ref{sde}), then
$u_X=u_Y=u$ for all solutions $X$ and $Y$ to~(\ref{sde}) as in
Theorem~\ref{maincorollary}~(ii) or Theorem~\ref{maincorollary-1d}~(ii) and hence pathwise
uniqueness holds. This argument will be used repeatedly in the next subsection.
%
\subsection{Consequences}
\label{sec:csq}
Let us first consider the case where $\sigma$ is uniformly
elliptic: for all $t,x$,
\begin{equation}
  \frac{1}{2}\sigma\,\sigma^*(t,x)=a(t,x)\geq c\,I\label{ellipticity}
\end{equation}
for some $c>0$.
For example if $F=0$ and $\sigma$ does not depend on time, then there exists a corresponding stationary measure $\bar u>0$ in
$L^{d/(d-1)}$ as per Aleksandrov~\cite{A}. In that case, when $u_0\leq C\bar u$, then the unique solution $u$ of~\eqref{pde} in
$L^2_{t,loc}(H^1_x)$ satisfies $u(t,dx)\leq C\bar u(x)dx$ for all $t\geq 0$ by the maximum principle.

\begin{cor}
  \label{cor:1d}
  Assume that $F=0$ and $\sigma(x)$ satisfies \eqref{ellipticity} and belongs to $L^\infty\cap
  W^{1,2d}_x$ (or $L^\infty\cap H^{1/2}$ if $d=1$).
  Assume also that $u_0\leq C\bar u$ for some constant $C>0$. Then one has both existence of a strong solution to \eqref{sde} and
  pathwise uniqueness. 
\end{cor}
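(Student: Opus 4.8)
The plan is to deduce the result from the general Theorems~\ref{maintheo} and~\ref{theo1d} (the latter when $d=1$), taking $F=F_n=0$ so that every hypothesis on the drift is trivial, and to combine this with the classical uniqueness in law for~\eqref{sde} in order to turn the conditional pathwise uniqueness of part~(ii) into an unconditional statement. The whole point is that the exponent $2d$ (resp.\ the space $H^{1/2}$ when $d=1$) is exactly the one for which the weighted norm $\|\sigma\|_{H^1(u)}$ (resp.\ $\|\sigma\|_{H^{1/2}(u)}$) is controlled by the $L^{d/(d-1)}$ bound on $u$ coming from Aleksandrov's estimate. Indeed, for $d\geq2$, since $M$ is bounded on $L^{2d}(\RR^d)$ we have $M|\nabla\sigma|\in L^{2d}$, hence $(M|\nabla\sigma|)^2\in L^d$; pairing with $u(t,\cdot)\leq C\bar u\in L^{d/(d-1)}$ by H\"older's inequality (the conjugate exponent of $d/(d-1)$ being $d$) gives $\int_{\RR^d}(M|\nabla\sigma|)^2\,u(t,dx)\leq C_d\|\nabla\sigma\|_{L^{2d}}^2\,\|\bar u\|_{L^{d/(d-1)}}$, and since $|\sigma|\leq\|\sigma\|_\infty$ and $u(t,\cdot)$ is a probability measure, $\|\sigma\|_{H^1(u)}<\infty$. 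When $d=1$, $\bar u\in L^\infty$ and $M$ is bounded on $L^2$, so the same computation with $\partial^{1/2}_x\sigma\in L^2$ yields $\|\sigma\|_{H^{1/2}(u)}^2\leq\|u\|_\infty\,\|M|\partial^{1/2}_x\sigma|\|_{L^2}^2<\infty$.

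For existence I would first regularize by setting $F_n=0$ and $\sigma_n=\sigma*\rho_n$ a standard mollification, so that $\|\sigma_n\|_\infty\leq\|\sigma\|_\infty$ and $\|\nabla\sigma_n\|_{L^{2d}}\leq\|\nabla\sigma\|_{L^{2d}}$ uniformly, and the mollified coefficients $a_n=\tfrac12\sigma_n\sigma_n^*$ remain uniformly elliptic and uniformly bounded (using the positivity of $\sigma$ when $d=1$, and the uniform continuity of $\sigma\in W^{1,2d}\hookrightarrow C^{0,1/2}$ when $d\geq2$). The decisive step is then the uniform a priori bound $\sup_n\|u_n\|_{L^\infty_t(L^{d/(d-1)}_x)}<\infty$ (resp.\ $L^\infty_{t,x}$ when $d=1$): applying the same maximum-principle comparison as for the limit equation to each regularized problem~\eqref{pden} gives $u_n(t,dx)\leq C'\bar u_n(x)\,dx$, where $\bar u_n$ is the stationary measure of $a_n$, and Aleksandrov's estimate~\cite{A}, whose constant depends only on the dimension, the ellipticity constant and $\|a_n\|_\infty$, yields $\sup_n\|\bar u_n\|_{L^{d/(d-1)}}<\infty$. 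With this bound the computation of the previous paragraph gives $\sup_n\|\sigma\|_{H^1(u_n)}<\infty$ (resp.\ $\sup_n\|\sigma\|_{H^{1/2}(u_n)}<\infty$), which is~\eqref{eq:hyp-2}; assumption~\eqref{eq:hyp-1} follows by splitting $\int|\sigma_n-\sigma|\,du_n$ over a large ball $B_R$ and its complement and using $u_n\leq C'\bar u_n$, the local $L^d$ convergence of $\sigma_n$ on $B_R$, and the smallness of $\|\bar u_n\|_{L^{d/(d-1)}(B_R^c)}$ uniformly in $n$; and~\eqref{eq:hyp-3} holds after extracting a weak-$*$ convergent subsequence, the limit still satisfying $u(t,\cdot)\leq C\bar u$. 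Theorem~\ref{maintheo}~(i) (resp.~\ref{theo1d}~(i)) then produces the strong solution and identifies $u(t,\cdot)$ with its law.

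For pathwise uniqueness I would use the recipe discussed at the end of Subsection~\ref{sec:gal-res}. Since $\sigma$ is uniformly elliptic and bounded and, for $d\geq2$, continuous (by $W^{1,2d}\hookrightarrow C^{0,1/2}$), uniqueness in law holds for~\eqref{sde} by the classical martingale-problem theory of Stroock and Varadhan; in dimension one the Engelbert--Schmidt criterion~\cite{ES1} applies directly because $\sigma^{-2}$ is bounded. Consequently every solution with initial law $u_0\leq C\bar u$ has the same time marginals, necessarily equal to the law $u$ constructed above, which satisfies $u(t,\cdot)\leq C\bar u\in L^{d/(d-1)}_x$ (resp.\ $L^\infty_x$ if $d=1$). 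Hence any two solutions $X,Y$ have $u_X=u_Y=u$ in the required integrability class, and the norm hypotheses of Theorem~\ref{maintheo}~(ii) (resp.~\ref{theo1d}~(ii)) hold since $\|F\|_{W^{\phi,\text{weak}}(u)}=0$ and $\|\sigma\|_{H^1(u)}<\infty$ (resp.\ $\|\sigma\|_{H^{1/2}(u)}<\infty$) by the first paragraph. This gives $\sup_{t\in[0,T]}|X_t-Y_t|=0$ a.s., i.e.\ unconditional pathwise uniqueness.

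The step I expect to be the main obstacle is the uniform a priori bound $\sup_n\|u_n\|_{L^\infty_t(L^{d/(d-1)}_x)}<\infty$, and in particular the comparison $u_0\leq C'\bar u_n$ uniformly in $n$: this rests on the stability of the stationary Fokker--Planck solutions, namely $\bar u_n\to\bar u$ with comparable behaviour at infinity and a uniform lower bound on compacts, which itself follows from the uniform ellipticity of the $a_n$ and the uniform Aleksandrov bound. Everything else --- the H\"older/maximal-function estimates, the localization argument for~\eqref{eq:hyp-1}, and the passage from uniqueness in law to unconditional pathwise uniqueness --- is routine once this bound is in hand.
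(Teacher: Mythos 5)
Your proposal is correct and follows essentially the same route the paper indicates for this corollary: obtain the $L^{d/(d-1)}$ (resp.\ $L^\infty$ for $d=1$) bound on the marginals from Aleksandrov's stationary measure and the maximum principle, apply Theorem~\ref{maincorollary} (resp.~\ref{maincorollary-1d}) with $p=d$, $2p=2d$, and remove the restriction in the pathwise uniqueness statement via uniqueness in law from Stroock--Varadhan (resp.\ Engelbert--Schmidt), using the continuity of $\sigma$ from $W^{1,2d}\hookrightarrow C^{0,1/2}$. You supply more detail than the paper does on the approximation step (uniform Aleksandrov bounds for $\bar u_n$ and the comparison $u_n\leq C'\bar u_n$), and you correctly identify that as the only delicate point.
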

Note that pathwise uniqueness holds without additional assumption since $\sigma\in W^{1,2d}$
implies that $\sigma$ is continuous. And uniqueness in law holds in this case since $\sigma$
and $F$ are bounded and $\sigma$ is uniformly elliptic~\cite[Thm.~7.2.1]{SV79}.

Those results were later extended by Krylov in the parabolic, time dependent
case~\cite{K71,K80}. We may for example use the following version found in~\cite{Zh11}.
\begin{thm}
  \label{thm:Krylov}
  Assume that $F$ and $\sigma$ are bounded and $\sigma$ satisfies~\eqref{ellipticity}. Then,
  for all solution $X$ of~\eqref{sde} with any initial condition, for all $T>0$ and $p,q>1$
  such that
  $$
  \frac{d}{p}+\frac{2}{q}<2,
  $$
  there exists a constant $C$ such that for all $f\in L^q_t(L^p_x)$
  $$
  \mathbb{E}\left[\int_0^T f(t,X_t)dt\right]\leq C\|f\|_{L^q_t(L^p_x)}.
  $$
\end{thm}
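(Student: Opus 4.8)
The plan is to reduce the estimate to a purely analytic bound on the solution of the backward Kolmogorov equation associated to~\eqref{sde}, and then transfer that bound to the process via It\^o's formula. First, replacing $f$ by $|f|$, it suffices to treat $f\geq 0$, since this leaves the right-hand side unchanged. For such an $f$ I would introduce the solution $w$ to the terminal value problem
$$
\partial_t w+\sum_{i,j}a_{ij}\,\partial^2_{ij}w+F\cdot\nabla w=-f,\qquad w(T,\cdot)=0,
$$
with $a=\frac12\sigma\sigma^*$. By~\eqref{ellipticity} this operator is uniformly parabolic, and its coefficients are bounded, which is exactly the setting of the $L^p$--$L^q$ parabolic Calder\'on--Zygmund theory.

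The analytic heart of the argument is to invoke that theory to obtain the a priori estimate
$$
\|\partial_t w\|_{L^q_t(L^p_x)}+\|D^2_x w\|_{L^q_t(L^p_x)}\leq C\,\|f\|_{L^q_t(L^p_x)},
$$
with $C$ depending only on $d,p,q,T$, the ellipticity constant $c$ and the sup-norms of $F$ and $\sigma$. The subcriticality condition $d/p+2/q<2$ enters precisely here: through the anisotropic parabolic Sobolev embedding, a function whose time derivative and spatial Hessian lie in $L^q_t(L^p_x)$ is then bounded (indeed H\"older continuous), so that
$$
\|w\|_{L^\infty([0,T]\times\R^d)}\leq C\,\|f\|_{L^q_t(L^p_x)}.
$$

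With these bounds in hand I would apply It\^o's formula to $w(t,X_t)$. For smooth, compactly supported data the drift term collapses, by construction of $w$, to $-f(t,X_t)\,dt$, so that
$$
\EE[w(T,X_T)]-\EE[w(0,X_0)]=-\EE\left[\int_0^T f(t,X_t)\,dt\right],
$$
the stochastic integral $\int_0^\cdot\nabla w\cdot\sigma\,dW$ being a genuine martingale since its quadratic variation is bounded by $\|\nabla w\|_\infty^2\|\sigma\|_\infty^2\,T$. As $w(T,\cdot)=0$, the left-hand side equals $\EE[w(0,X_0)]\leq\|w\|_\infty$, and the embedding estimate closes the smooth case. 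For general $f\in L^q_t(L^p_x)$ I would extend by density: the linear functional $f\mapsto\EE\int_0^T f(t,X_t)\,dt$, defined on smooth $f$, is bounded for the $L^q_t(L^p_x)$ norm by the smooth-case inequality, hence extends continuously, and one checks via a monotone-class argument that the extension coincides with the desired expectation.

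I expect the main obstacle to be the parabolic regularity input. Proving (rather than citing) the $W^{2,1}_{p,q}$ Calder\'on--Zygmund estimate together with the sharp embedding into $L^\infty$ under $d/p+2/q<2$ requires the full weight of Krylov's parabolic $L^p$ theory, and is exactly where uniform ellipticity is indispensable: no such bound can hold for a degenerate matrix $a$. The secondary, more routine difficulty is justifying It\^o's formula when $w$ is merely in the parabolic Sobolev class rather than $C^{1,2}$, which is handled by the smooth-data-plus-density scheme above and does not require any new idea beyond the uniform estimates already established.
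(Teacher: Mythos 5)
The paper does not actually prove this statement: it is quoted as a known result, citing Zhang [Zh11] and ultimately Krylov [K71, K80], so there is no internal proof to compare your argument against. Judged on its own terms, your proposal has a genuine gap at exactly the point you call the ``analytic heart''. The a priori estimate
$$
\|\partial_t w\|_{L^q_t(L^p_x)}+\|D^2_x w\|_{L^q_t(L^p_x)}\leq C\,\|f\|_{L^q_t(L^p_x)}
$$
for the non-divergence form operator $\partial_t+\sum a_{ij}\partial^2_{ij}+F\cdot\nabla$ is \emph{not} available under the hypotheses of the theorem: $a$ is only assumed bounded, measurable and uniformly elliptic. Parabolic Calder\'on--Zygmund theory in $L^p$ (for $p\neq 2$) requires the leading coefficients to be at least uniformly continuous or VMO in $x$; for merely measurable uniformly elliptic coefficients the $W^{2,p}$ estimates fail (counterexamples of Ural'tseva type), and even unique solvability of your terminal value problem in the stated function class is unclear. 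Since the entire point of the theorem --- and of the paper --- is to handle discontinuous $\sigma$, this is not a technicality that can be waved away; it is the reason the estimate is hard.

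The proofs in the literature avoid this step entirely: Krylov's estimate is obtained through Alexandrov--Bakelman--Pucci-type maximum principles and the theory of Bellman equations (one takes a supremum over all operators with the given ellipticity bounds, producing a convex fully nonlinear equation whose solution dominates $w$ and admits an $L^\infty$ bound by an integral norm of $f$), precisely because those arguments are stable under merely measurable coefficients. Your second step --- transferring an $L^\infty$ bound on a supersolution to the process via It\^o's formula, a martingale argument and density --- is the standard and correct way to finish, and the role you assign to the subcriticality condition $d/p+2/q<2$ is the right one; but the PDE input must come from the ABP/Bellman machinery, not from $W^{2,1}_{p,q}$ Calder\'on--Zygmund estimates.
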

This result means that
$$
u\in L^{q'}_t(L^{p'}_x),
$$
where $1/p+1/p'=1$ and $1/q+1/q'=1$, and we obtain the following corollary.
\begin{cor}
  \label{cor:Krylov}
  \begin{description}
  \item[\textmd{(i)}] Assume that $d\geq 2$, $F,\;\sigma\in L^\infty$, $\sigma$ satisfies~\eqref{ellipticity}, $F\in
    L^{q/2}_{t,loc}(W^{1,p/2}_x)$ and $\sigma\in L^q_{t,loc}(W^{1,p}_x)$ with
    $2/q+d/p<1$. Then one has both existence of a strong solution to \eqref{sde} and pathwise uniqueness for any initial condition
    $\xi$.
  \item[\textmd{(ii)}] Assume that $d=1$, $F,\;\sigma\in
    L^\infty$, $\sigma$ satisfies~\eqref{ellipticity}, $\sigma\in L^q_{t,loc}(W^{1/2,p}_x)$
    with $2/q+1/p<1$ and  $F\in L^{q/2}_{t,loc}(W^{1,p/2}_x)$ if $p>2$, $F\in L^{q/2}_{t,loc}(W^{1,1+\varepsilon})$ for some
    $\varepsilon>0$ if $p\leq 2$. Then one has both existence of a strong solution to \eqref{sde} and pathwise uniqueness for any initial
    condition $\xi$.
  \end{description}
\end{cor}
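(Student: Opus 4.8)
The plan is to feed Krylov's a priori estimate (Theorem~\ref{thm:Krylov}) into the general Corollaries~\ref{maincorollary} and~\ref{maincorollary-1d}; the only genuine work is to rescale the exponents so that Krylov's admissible range coincides with the hypotheses of those corollaries, and to exhibit an admissible regularization. The key observation is that in Theorem~\ref{maincorollary} the gradient of $\sigma$ is measured in $L^{2q}_{t}(L^{2p}_x)$ while the law sits in the dual scale $L^{q'}_t(L^{p'}_x)$. Hence the present assumption $\sigma\in L^q_{t,loc}(W^{1,p}_x)$ corresponds to applying Theorem~\ref{maincorollary} with the halved exponents $\tilde p=p/2$, $\tilde q=q/2$, so that $L^{2\tilde q}_t(L^{2\tilde p}_x)=L^q_t(L^p_x)$ and, similarly, $F\in L^{q/2}_{t,loc}(W^{1,p/2}_x)$ is exactly $\nabla F\in L^{\tilde q}_t(L^{\tilde p}_x)$. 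With this identification the law must belong to $L^{\tilde q'}_t(L^{\tilde p'}_x)$, and Theorem~\ref{thm:Krylov} supplies precisely this: run with the pair $(\tilde p,\tilde q)$ it requires $d/\tilde p+2/\tilde q<2$, i.e. $2d/p+4/q<2$, i.e. $2/q+d/p<1$, which is the standing hypothesis. Since $F,\sigma$ are bounded and $\sigma$ is uniformly elliptic, Theorem~\ref{thm:Krylov} applies to every solution of~\eqref{sde}, giving $u_X,u_Y\in L^{\tilde q'}_{t,loc}(L^{\tilde p'}_x)$.

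For existence I would regularize $F$ and $\sigma$ by convolution and apply Theorem~\ref{maincorollary}~(i). One needs smooth, uniformly bounded $F_n,\sigma_n$ with $F_n\to F$, $\sigma_n\to\sigma$ in $L^{\tilde q}_{t,loc}(L^{\tilde p}_x)$, uniform bounds on $\nabla\sigma_n$ in $L^{2\tilde q}_t(L^{2\tilde p}_x)=L^q_t(L^p_x)$ and on $\nabla F_n$ in $L^{\tilde q}_t(L^{\tilde p}_x)$ (all immediate from Young's inequality), and crucially a regularization keeping~\eqref{sden} uniformly elliptic. Applying Theorem~\ref{thm:Krylov} to the regularized equations then yields $\sup_n\|u_n\|_{L^{\tilde q'}_{t,loc}(L^{\tilde p'}_x)}<\infty$ with a constant depending only on the common bounds $\|F_n\|_{L^\infty}$, $\|\sigma_n\|_{L^\infty}$ and the ellipticity constant. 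The weak-$*$ convergence $u_n\to u$ follows from tightness of the laws (uniform moment bounds from bounded coefficients) together with uniqueness in law for the limiting~\eqref{sde}, which holds here by Stroock--Varadhan \cite[Thm.~7.2.1]{SV79} since $\sigma$ is bounded and uniformly elliptic. Theorem~\ref{maincorollary}~(i) then produces the strong solution and the announced convergence of $X^n$.

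For pathwise uniqueness I would invoke the remark following Theorem~\ref{theo1d}: uniqueness in law (again \cite[Thm.~7.2.1]{SV79}) forces $u_X=u_Y=u$ for any two solutions as in Theorem~\ref{maincorollary}~(ii), so both marginals lie in $L^{\tilde q'}_{t,loc}(L^{\tilde p'}_x)$ and carry the Sobolev bounds above; Theorem~\ref{maincorollary}~(ii) then gives $\sup_t|X_t-Y_t|=0$ a.s. Part~(ii) is identical after replacing Theorem~\ref{maincorollary} by Theorem~\ref{maincorollary-1d}. In $d=1$, Krylov with $(\tilde p,\tilde q)$ needs $1/\tilde p+2/\tilde q<2$, that is $1/p+2/q<1$, the stated condition, and the $H^{1/2}$ requirement of Theorem~\ref{maincorollary-1d} reads $\sigma\in L^{2\tilde q}_t(W^{1/2,2\tilde p}_x)=L^q_{t,loc}(W^{1/2,p}_x)$, the stated hypothesis. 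When $p\le 2$ one has $\tilde p\le1$, so the maximal operator is no longer bounded on the relevant Lebesgue space; this is exactly why Theorem~\ref{maincorollary-1d} demands $\nabla F_n\in L^{\tilde q}_t(L^{1+\varepsilon}_x)$, matching the stated replacement $F\in L^{q/2}_{t,loc}(W^{1,1+\varepsilon}_x)$.

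The main obstacle is the regularization step: one must produce smooth coefficients that are simultaneously uniformly elliptic (so Krylov's constant is uniform in $n$), uniformly bounded in the Sobolev scales above, and convergent to the \emph{given} $\sigma$ in $L^{\tilde p}$. Direct mollification $\sigma_n=\sigma*\rho_n$ gives the correct convergence but does not automatically preserve the lower bound on $a_n=\tfrac12\sigma_n\sigma_n^*$, whereas mollifying $a$ and taking a symmetric square root preserves ellipticity but a priori converges to $(\sigma\sigma^*)^{1/2}$ rather than to $\sigma$. Reconciling these is possible because $\sigma\in W^{1,p}_x$ with $p>d$ (resp. the $H^{1/2}$ bound in $d=1$) is continuous, so direct mollification converges locally uniformly and the ellipticity can be controlled up to a vanishing loss; making this uniform in $t$ on $[0,T]$ is the delicate point, and everything else is bookkeeping with Young's inequality and the duality identification of the exponents.
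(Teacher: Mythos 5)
Your proposal is essentially the paper's (largely implicit) argument: the authors deduce Corollary~\ref{cor:Krylov} directly from Theorem~\ref{thm:Krylov} and Theorems~\ref{maincorollary}/\ref{maincorollary-1d} via exactly the exponent rescaling $(\tilde p,\tilde q)=(p/2,q/2)$ you carry out, with the observation that Krylov's estimate applies to \emph{every} solution (hence to $u_X$, $u_Y$ and, uniformly, to the $u_n$ of the mollified equations), and your bookkeeping of the conditions $2/q+d/p<1$ and of the $p\le 2$ case in $d=1$ is correct. One small remark: your appeal to Stroock--Varadhan to force $u_X=u_Y=u$ is both unnecessary (Theorem~\ref{maincorollary}~(ii) only needs each marginal in $L^{\tilde q'}_{t,loc}(L^{\tilde p'}_x)$, which Krylov already gives solution by solution) and not fully justified here, since, as the paper itself notes before Corollary~\ref{cor:better-v2}, $\nabla\sigma\in L^q_{t,loc}(L^p_x)$ does not by itself yield the uniform-in-$t$ modulus of continuity that \cite[Thm.~7.2.1]{SV79} requires; the direct route avoids this. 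Your concern about producing mollifications that keep a uniform ellipticity constant (so that Krylov's constant is uniform in $n$) is a genuine point that the paper passes over in silence, and your proposed resolution via the continuity coming from $W^{1,p}_x$, $p>d$, is the natural one.
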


Note that in this case, pathwise uniqueness holds without additional assumption since Krylov's
inequality implies that $u\in L^{q'}_t(L^{p'}_x)$ for \emph{all} solutions to~(\ref{sde}).

In our setting, since we need additional regularity on $\sigma$, it is easy to obtain better a
priori estimates for $u$ than those given by Krylov inequality. For instance:
\begin{prop} 
  \label{apriorilp}
 For any $d\geq 1$, assume $u^0\in L^1\cap L^\infty$, $F,\;\sigma\in L^\infty$, $\sigma$ satisfies~\eqref{ellipticity} and $\nabla
  \sigma\in L^q_{t,loc}(L^p_x)$ satisfying $2/q+d/p=1$ with $p>d$. Then any $u$ solution to \eqref{pde}, limit of smooth solutions, belongs to $L^\infty_t(L^r_x)$ for any $1\leq r\leq\infty$.
\end{prop}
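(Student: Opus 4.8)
The plan is to prove uniform $L^\infty_t(L^r_x)$ bounds on the smooth solutions $u_n$ of the regularized equation \eqref{pden} --- with $F_n,\sigma_n$ chosen so that $\|F_n\|_\infty$, $\|\sigma_n\|_\infty$, the ellipticity constant, and $\|\nabla\sigma_n\|_{L^q_{t}(L^p_x)}$ remain uniformly bounded --- and then to pass to the limit, the bounds being preserved by weak-$*$ lower semicontinuity of the norm. Since each $u_n\geq 0$ is a genuine law with conserved mass, the case $r=1$ is immediate from $u^0\in L^1$; the range $1<r<\infty$ is an energy estimate and $r=\infty$ will follow by iteration. To begin I would expand the diffusion term in \eqref{pde} to isolate the coercive part from a first-order part: writing $b=\nabla\cdot a$ for the vector with components $\sum_j\partial_j a_{ij}$, one has $a=\tfrac12\sigma\sigma^*$, so $|b|\lesssim\|\sigma\|_\infty\,|\nabla\sigma|$ and hence $b\in L^q_{t}(L^p_x)$ with the same critical exponents as $\nabla\sigma$.

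For fixed $r\in(1,\infty)$ I would multiply the equation by $u^{r-1}$ and integrate. Setting $v=u^{r/2}$, integration by parts yields
\[
\frac1r\frac{d}{dt}\|v\|_{L^2}^2+\frac{4c_0(r-1)}{r^2}\|\nabla v\|_{L^2}^2\le \frac{2(r-1)}{r}\int (|F|+|b|)\,v\,|\nabla v|\,dx,
\]
where the dissipative term on the left uses $a\ge c_0 I$. The $F$-contribution is harmless since $F\in L^\infty$: by Young's inequality it is bounded by a small multiple of $\|\nabla v\|_{L^2}^2$ plus $C\|F\|_\infty^2\|v\|_{L^2}^2$. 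The real difficulty is the $b$-term, and this is exactly where the critical scaling enters. I would bound $\int|b|^2v^2$ by H\"older with exponent $p/2$, getting $\|b\|_{L^p}^2\|v\|_{L^{2p/(p-2)}}^2$, then interpolate by Gagliardo--Nirenberg, $\|v\|_{L^{2p/(p-2)}}\le C\|\nabla v\|_{L^2}^{\theta}\|v\|_{L^2}^{1-\theta}$ with $\theta=d/p$, which is $<1$ precisely because $p>d$. Young's inequality then absorbs $\|\nabla v\|_{L^2}^2$ into the dissipation at the cost of the factor $\|b(t)\|_{L^p}^{2/(1-\theta)}=\|b(t)\|_{L^p}^{q}$, using $2/q+d/p=1\Leftrightarrow q=2p/(p-d)$. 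This residual time weight lies in $L^1_t([0,T])$ by the critical-exponent hypothesis, so after absorption one arrives at
\[
\frac{d}{dt}\|u(t)\|_{L^r}^r\le C_r\big(\|b(t)\|_{L^p}^q+\|F\|_\infty^2\big)\,\|u(t)\|_{L^r}^r,
\]
and Gr\"onwall together with $u^0\in L^1\cap L^\infty\subset L^r$ gives $\sup_{[0,T]}\|u(t)\|_{L^r}<\infty$.

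For $r=\infty$ I would rerun the same computation while tracking the $r$-dependence of the constants, which turns out to be polynomial ($C_r\sim r^{q}$), and then perform a Moser iteration along a geometric sequence $r_k\to\infty$: because each constant enters raised to the power $1/r_k$, the resulting infinite product converges and yields $\|u\|_{L^\infty_t(L^\infty_x)}\le C\|u^0\|_{L^{r_0}}$ for a fixed base exponent $r_0$. Alternatively one may invoke the Aronson--Serrin boundedness theory for parabolic equations with drift in the critical class $L^q_{t}(L^p_x)$, $2/q+d/p=1$. In low dimensions ($d=1,2$) the embedding $H^1\hookrightarrow L^{2^*}$ degenerates, but Gagliardo--Nirenberg still supplies the needed interpolation for any finite target exponent, so only cosmetic changes are required.

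The step I expect to be the main obstacle is the $b$-term in the critical case $2/q+d/p=1$: the absorption closes only because $p>d$ keeps the interpolation exponent $\theta=d/p$ strictly below $1$ and makes the leftover time weight exactly integrable, so there is no slack and one must verify that no hidden smallness is needed. The second delicate point is controlling the growth of the constants in $r$ tightly enough for the Moser iteration to converge and thereby reach $r=\infty$.
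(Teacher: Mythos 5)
Your proof is correct and follows essentially the same route as the paper's: an energy estimate on $\int u^r$ with $v=u^{r/2}$, the first-order term $b=\nabla\cdot a$ controlled by H\"older and Gagliardo--Nirenberg interpolation (the interpolation exponent staying below $1$ exactly because $p>d$), absorption into the elliptic dissipation leaving the $L^1_t$-integrable weight $\|\nabla a\|_{L^p_x}^{q}$, and Gr\"onwall, all performed on regularized solutions and passed to the limit. The only real difference is at $r=\infty$, where the paper simply asserts the bound is uniform in $r$ while you propose tracking the constants and running a Moser iteration --- which is in fact the more careful way to justify that step.
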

This proposition is based on classical energy estimates and hence we just give a very short proof of it in
Section~\ref{sec:pf-prop-apriori}. Combined with Theorem \ref{maincorollary} this gives slightly better conditions for $\sigma$ and
much better conditions for $F$, assuming additional conditions on the initial distribution:
\begin{cor}
  \label{cor:better}
  Assume that $d\geq 2$, $u^0\in L^1\cap L^\infty$, $F,\;\sigma\in L^\infty$, $F\in L^1_{t,loc}(W^{1,1}_x)$ and $\nabla\sigma\in
  L^q_{t,loc}(L^p_x)$, where $2/q+d/p=1$ with $p>d$. Assume as well that $\sigma$ satisfies \eqref{ellipticity}. Then one has
  existence of a strong solution to \eqref{sde} with marginal distributions $u(t,dx)$ in $L^\infty_{t,loc}(L^\infty_x)$. In addition,
  pathwise uniqueness holds among all solutions with marginal distributions in $L^\infty_{t,loc}(L^\infty_x)$.
\end{cor}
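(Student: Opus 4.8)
The plan is to realise Corollary~\ref{cor:better} as a direct application of Theorem~\ref{maintheo}, the higher integrability of the laws being supplied by Proposition~\ref{apriorilp}. The only genuine work is to produce a regularising sequence for which the hypotheses \eqref{eq:hyp-1}--\eqref{eq:hyp-3} hold uniformly, and then to verify the two norm bounds $\|\sigma\|_{H^1(u_n)}$ and $\|F\|_{W^{\phi,\mathrm{weak}}(u_n)}$ appearing in \eqref{eq:hyp-2}; the uniqueness statement reduces to the same norm estimates for the fixed marginals $u_X,u_Y$.

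\textbf{Existence.} First I would fix mollifiers and set $\sigma_n=\sigma*\rho_n$, $F_n=F*\rho_n$ (truncated so as to stay in $L^\infty$). Convolution does not increase the $L^\infty$ norms nor the $L^q_{t,loc}(L^p_x)$ norm of $\nabla\sigma$; and since $2/q+d/p=1$ with $p>d$ forces $\sigma(t,\cdot)\in W^{1,p}_x$ with $p>d$, Morrey's embedding makes $\sigma$ continuous in $x$, so that \eqref{ellipticity} is preserved up to a uniform constant ($c/2$, say) for $n$ large. Consequently the constant in the energy estimate underlying Proposition~\ref{apriorilp} depends only on $\|u^0\|_{L^1\cap L^\infty}$, $\sup_n\|\sigma_n\|_\infty$, the ellipticity constant and $\sup_n\|\nabla\sigma_n\|_{L^q_{t,loc}(L^p_x)}$, all controlled uniformly in $n$, and hence $\sup_n\|u_n\|_{L^\infty_{t,loc}(L^r_x)}<\infty$ for every $r$. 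Weak-$*$ compactness then gives \eqref{eq:hyp-3}, the limit being the solution $u$ of \eqref{pde}, itself in $L^\infty_{t,loc}(L^\infty_x)$ by Proposition~\ref{apriorilp}. Hypothesis \eqref{eq:hyp-1} follows from Hölder, $\int_0^T\!\int(|\sigma_n-\sigma|+|F_n-F|)\,du_n\,dt\le\|u_n\|_{L^{q'}_{t,loc}(L^{p'}_x)}\big(\|\sigma_n-\sigma\|_{L^q_{t,loc}(L^p_x)}+\|F_n-F\|_{L^q_{t,loc}(L^p_x)}\big)\to0$.

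The core of the argument is \eqref{eq:hyp-2}. The $L^\infty$ bounds on $F_n,\sigma_n$ are immediate. For $\|\sigma\|_{H^1(u_n)}$, the first term in \eqref{H1} is $\le\|\sigma\|_\infty^2\,T$ as $u_n$ is a probability measure; for the second, note that $p>d\ge2$ and $q>2$, so $M$ is bounded on $L^p_x$, whence $M|\nabla\sigma|\in L^q_{t,loc}(L^p_x)$ and $(M|\nabla\sigma|)^2\in L^{q/2}_{t,loc}(L^{p/2}_x)$; pairing with $u_n\in L^\infty_{t,loc}(L^{(p/2)'}_x)$ by Hölder in $x$ and then in $t$ bounds it uniformly in $n$. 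The delicate term is $\|F\|_{W^{\phi,\mathrm{weak}}(u_n)}$, and this is exactly where \eqref{W11} earns its keep. Using that $u_n$ is of unit mass and uniformly bounded in $L^\infty_{t,x}$, together with the kernel estimate $\int_{B(x,1)}\frac{dx}{(L^{-1}+|x-z|)\,|x-z|^{d-1}}\lesssim\log L$ (carried out in the appendix following~\cite{Ja}), one obtains $\int_0^T\!\int(|F|+M_L\nabla F)\,du_n\,dt\le\|F\|_\infty T+\sqrt{\log L}\,T+C\log L\,\delta(L)$, where $C=C(\sup_n\|u_n\|_\infty)$ and $\delta(L)=\int_0^T\!\int|\nabla F|\,\mathbbm{1}_{|\nabla F|\ge\sqrt{\log L}}\,dx\,dt\to0$ by dominated convergence since $\nabla F\in L^1_{t,loc}(L^1_x)$. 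Thus $\frac{1}{L\log L}\int_0^T\!\int(|F|+M_L\nabla F)\,du_n\,dt\le\frac{1}{L}\,h(L)$ with $h(L)\to0$; choosing a single super linear $\phi$ with $\phi(L)/L=h(L)^{-1/2}$ yields $\sup_n\|F\|_{W^{\phi,\mathrm{weak}}(u_n)}<\infty$. \emph{This is the main obstacle}: producing one $\phi$, independent of $n$, that absorbs the logarithmic loss caused by the failure of $M$ to be bounded on $L^1$, which is the whole reason one cannot simply invoke Theorem~\ref{maincorollary} with a $W^{1,p}$ bound. With \eqref{eq:hyp-1}--\eqref{eq:hyp-3} in hand, Theorem~\ref{maintheo}~(i) provides the strong solution, the convergence of $X^n$, and the identification of the law of $X_t$ with $u\in L^\infty_{t,loc}(L^\infty_x)$.

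\textbf{Uniqueness.} For solutions $X,Y$ whose marginals $u_X,u_Y$ lie in $L^\infty_{t,loc}(L^\infty_x)$ there is no regularisation to control: the Hölder estimate of the previous paragraph bounds $\|\sigma\|_{H^1(u_X)}+\|\sigma\|_{H^1(u_Y)}$, and the appendix logarithmic estimate bounds $\|F\|_{W^{\phi,\mathrm{weak}}(u_X)}+\|F\|_{W^{\phi,\mathrm{weak}}(u_Y)}$ for a suitable super linear $\phi$. Since $F\in L^\infty$ and $X_0=Y_0$ a.s., Theorem~\ref{maintheo}~(ii) then gives $\sup_{t\in[0,T]}|X_t-Y_t|=0$ a.s.
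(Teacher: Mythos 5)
Your route is the paper's: Proposition~\ref{apriorilp} supplies the uniform $L^\infty_t(L^1_x\cap L^\infty_x)$ bound on the laws, Theorem~\ref{maintheo} does the probabilistic work, and the only delicate norm is $\|F\|_{W^{\phi,\mathrm{weak}}(u_n)}$, which you bound exactly as in the appendix: your dominated-convergence statement that $\int_0^T\!\int|\nabla F|\,\ind_{|\nabla F|\ge\sqrt{\log L}}\,dx\,dt\to 0$ carries the same content as the de la Vall\'ee Poussin step there, and building a single superlinear $\phi$ out of the resulting rate (uniform in $n$ because it only involves $\sup_n\|u_n\|_{L^\infty}$) is correct. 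The H\"older verifications of \eqref{eq:hyp-1} and of $\|\sigma\|_{H^1(u_n)}$ are also as intended, up to the routine remark that $\sigma_n\to\sigma$ only in $L^p_{x,loc}$, so \eqref{eq:hyp-1} needs the (uniform) tightness of the $u_n$ to localize.

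The one step that does not work as written is the claim that mollifying $\sigma$ preserves \eqref{ellipticity} ``up to $c/2$ for $n$ large'' by Morrey. Mollification goes the wrong way for lower bounds: Cauchy--Schwarz gives $|\sigma_n^*(t,x)\xi|^2\le\big(|\sigma^*\xi|^2*\rho_n\big)(t,x)$, so oscillations of $\sigma$ at scale $1/n$ can kill the ellipticity of $\sigma_n\sigma_n^*$, and you genuinely need $\|\sigma_n-\sigma\|_{L^\infty_{t,x}}\to 0$. Morrey only gives a modulus of continuity for $\sigma(t,\cdot)$ controlled by $\|\nabla\sigma(t,\cdot)\|_{L^p_x}$, which is merely $L^q$ in time; hence for each $n$ there is a time set of small but positive measure on which $|\sigma_n-\sigma|$ can be of order one and $a_n$ fails to be uniformly elliptic. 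Since the energy estimate behind Proposition~\ref{apriorilp} requires $a_n\ge c'I$ for a.e.\ $(t,x)$, the uniform $L^\infty$ bound on $u_n$ does not follow directly from your construction. The gap is repairable --- for instance, redefine $\sigma_n(t,\cdot)$ on the exceptional time set $\{t\le T:\ \|\nabla\sigma(t,\cdot)\|_{L^p_x}>K_n\}$ (whose measure tends to $0$ for a suitable $K_n\to\infty$) by any fixed uniformly elliptic matrix; this is compatible with \eqref{eq:hyp-1}--\eqref{eq:hyp-2} because all coefficients are bounded and the exceptional set shrinks --- but the repair has to be made explicit. (In fairness, the paper leaves the construction of the regularizing sequence implicit as well.)
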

As above, the pathwise uniqueness property could be improved if we could prove that uniqueness
in law holds. If $d=2$, uniqueness in law holds when $\sigma$ and $F$ are bounded and $\sigma$
is uniformly elliptic~\cite{K74}. When $d\geq 3$, by Sobolev embedding, the assumption
$\nabla\sigma\in L^q_{t,loc}(L^p_x)$ implies that $x\mapsto\sigma(t,x)$ is continuous for
almost all $t\geq 0$. This condition is not exactly sufficient to use the result of Stroock
and Varadhan~\cite[Thm.~7.2.1]{SV79}, which assumes that
$\sup_{t\in[0,T]}|\sigma(t,x)-\sigma(t,y)|\rightarrow 0$ when $y\rightarrow x$. This is true for
example if $\nabla\sigma\in L^\infty_{t,loc}(L^p_x)$ for $p>d$. 

Hence we obtain for \eqref{sde}
\begin{cor}
  \label{cor:better-v2}
  Assume that $d\geq 2$, $u^0\in L^1\cap L^\infty$, $F,\;\sigma\in L^\infty$, $F\in
  L^1_{t,loc}(W^{1,1}_x)$ and $\nabla\sigma\in L^q_{t,loc}(L^p_x)$ where $2/q+d/p=1$ with
  $p>d$. Assume as well that $\sigma$ satisfies \eqref{ellipticity}, and if
  $d\geq 3$ that for all $x$,
  $$
  \sup_{t\in[0,T]}|\sigma(t,x)-\sigma(t,y)|\rightarrow 0\quad\text{when}\quad y\rightarrow x.
  $$
  Then one has both existence of a strong solution to \eqref{sde} and pathwise uniqueness.
\end{cor}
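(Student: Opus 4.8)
The plan is to assemble this result from pieces already established in the paper, with the only genuinely new content being the verification of uniqueness in law under the stated continuity hypothesis. First I would observe that all the hypotheses of Corollary~\ref{cor:better} are subsumed here: we have $d\geq 2$, $u^0\in L^1\cap L^\infty$, $F,\sigma\in L^\infty$, $F\in L^1_{t,loc}(W^{1,1}_x)$, $\nabla\sigma\in L^q_{t,loc}(L^p_x)$ with $2/q+d/p=1$ and $p>d$, and $\sigma$ uniformly elliptic in the sense of~\eqref{ellipticity}. Therefore Corollary~\ref{cor:better} applies directly and yields both existence of a strong solution to~\eqref{sde} whose time marginals $u(t,dx)$ lie in $L^\infty_{t,loc}(L^\infty_x)$, and pathwise uniqueness \emph{among all solutions whose marginals lie in} $L^\infty_{t,loc}(L^\infty_x)$. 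The task remaining is only to upgrade this conditional pathwise uniqueness to unconditional pathwise uniqueness, and the mechanism for doing so is the argument spelled out at the end of Subsection~\ref{sec:gal-res}: if uniqueness in law holds for~\eqref{sde}, then every solution $X$ has the same law $u$, which we already know lies in $L^\infty_{t,loc}(L^\infty_x)$; hence the marginal restriction in Corollary~\ref{cor:better} is automatically satisfied by every solution, and pathwise uniqueness follows without restriction.

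So the crux is to prove uniqueness in law. For $d=2$ this is immediate: $\sigma$ and $F$ are bounded and $\sigma$ is uniformly elliptic, so uniqueness in law holds by the result of~\cite{K74}, exactly as noted in the remark preceding the statement. For $d\geq 3$ I would invoke the martingale-problem uniqueness theorem of Stroock and Varadhan~\cite[Thm.~7.2.1]{SV79}, which requires $\sigma$ bounded, $\sigma$ uniformly elliptic, and the modulus-of-continuity condition $\sup_{t\in[0,T]}|\sigma(t,x)-\sigma(t,y)|\to 0$ as $y\to x$. The first two are part of the hypotheses, and the third is precisely the extra continuity assumption imposed on $\sigma$ in the statement for $d\geq 3$. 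Thus~\cite[Thm.~7.2.1]{SV79} gives well-posedness of the martingale problem, hence uniqueness in law for~\eqref{sde}.

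With uniqueness in law established in both regimes, I would close the argument as follows. Let $X$ and $Y$ be any two solutions to~\eqref{sde} with $X_0=Y_0$ a.s. By uniqueness in law, both have the same one-dimensional marginals as the strong solution produced by Corollary~\ref{cor:better}, namely $u(t,dx)=u(t,x)dx$ with $u\in L^\infty_{t,loc}(L^\infty_x)$. In particular $u_X=u_Y=u\in L^\infty_{t,loc}(L^\infty_x)$, so $X$ and $Y$ fall within the class for which Corollary~\ref{cor:better} already guarantees pathwise uniqueness; hence $\sup_{t\in[0,T]}|X_t-Y_t|=0$ a.s., which is the desired unconditional pathwise uniqueness. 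The main obstacle is essentially bookkeeping: one must check that the continuity condition assumed for $d\geq 3$ is exactly what is needed to meet the hypotheses of~\cite[Thm.~7.2.1]{SV79} (since the bare Sobolev embedding from $\nabla\sigma\in L^q_{t,loc}(L^p_x)$ gives continuity in $x$ for a.e.\ $t$ but not the uniform-in-time modulus of continuity the theorem demands), and that the $L^\infty$ regularity of $u$ coming from Proposition~\ref{apriorilp} via Corollary~\ref{cor:better} indeed matches the marginal class in which pathwise uniqueness was proved.
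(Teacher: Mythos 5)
Your proposal is correct and follows exactly the route the paper intends: Corollary~\ref{cor:better} supplies existence and pathwise uniqueness within the class of solutions with $L^\infty$ marginals, and the extra hypotheses are there precisely to secure uniqueness in law (via \cite{K74} for $d=2$ and \cite[Thm.~7.2.1]{SV79} for $d\geq 3$, the latter being the reason for the uniform-in-time modulus-of-continuity assumption), which forces every solution's marginals to coincide with the $L^\infty$ law $u$ and so removes the restriction. This is the same argument the paper sketches in the paragraphs preceding the statement and announces as its repeated mechanism at the end of Subsection~\ref{sec:gal-res}.
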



This result can be compared with previous works dealing with the uniformly elliptic case. The best results in this case seem to be
those of~\cite{Zh11} and~\cite{PLL12}. In the first work, strong existence and pathwise uniqueness are proved under the assumptions
$\nabla\sigma\in L_{t,loc}^q(L_x^p)$, $\sigma(t,x)$ uniformly continuous with respect to $x$ and $F\in L_{t,loc}^q(L_x^p)$ with
$d/p+2/q<1$, so we obtain a slightly better condition on $\sigma$ (we can handle the limit
case $d/p+2/q=1$ and no uniform continuity is needed for strong existence), and a condition on $F$ which is neither stronger nor weaker, since
$L^1_{t,loc}(W^{1,1}_x)$ neither contains nor is contained in $L^q_{t,loc}(L^p_x)$ with $d/p+2/q<1$. In the second work, since the
approach for pathwise uniqueness is very different, the conditions obtained are of a different nature as ours. In particular, this
work requires additional boundedness assumptions on $\text{div}\,\sigma$ and $(D\sigma)^2$.
 
\medskip

In dimension $1$ in the stationary case, even if \eqref{ellipticity} is not satisfied but instead only 
\begin{equation}
\frac{1}{2}\sigma^2(x)=a(x)>0,\label{ellipticity'}
\end{equation}
then one has the a priori bound 
\[
u(t,x)\leq \frac{C}{a(x)}\,e^{\int_0^x \frac{F(y)}{a(y)}\,dy},
\] 
for solutions to \eqref{pde} again provided that $u^0$ satisfies the same bound.  Therefore, we obtain
\begin{cor}
  \label{cor:1d-better}
  Assume $d=1$, $\sigma,F\in L^\infty$, $\sigma$ satisfies~\eqref{ellipticity'}, $F/a\in L^1$,
  $$
  u_0(x)\leq \frac{C}{a(x)}\,e^{\int_0^x \frac{F(y)}{a(y)}\,dy}
  $$
  and
  \begin{equation}
    \label{eq:hyp-1d-better}
    \int_\R \frac{(M|\partial^{1/2}\sigma|(x))^2}{a(x)}\,dx<\infty\quad\text{and}\quad \int_\R\frac{M|\nabla F|}{a(x)}\,dx<\infty.    
  \end{equation}
  Then one has both existence of a strong solution to \eqref{sde} and pathwise uniqueness.
\end{cor}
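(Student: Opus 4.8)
The plan is to deduce the corollary from Theorem~\ref{theo1d}, the whole point being to produce the a priori bound that controls the weighted norms \eqref{H1/2} and \eqref{W110} appearing in its hypotheses. The central object is the stationary density
$$\bar u(x)=\frac{1}{a(x)}\,e^{\int_0^x F(y)/a(y)\,dy},$$
which is exactly the steady state of \eqref{pde}: writing the equation in flux form $\partial_t u=\partial_x\big(\partial_x(au)-Fu\big)$, a time-independent profile with vanishing flux must solve $\partial_x(a\bar u)=F\bar u$, and this first order linear ODE integrates to the displayed formula. Because $F/a\in L^1$, the exponential factor lies between $e^{-\|F/a\|_{L^1}}$ and $e^{\|F/a\|_{L^1}}$, so $\bar u$ is comparable to $1/a$; it is this comparison that will match the weighted norms to the integrals in \eqref{eq:hyp-1d-better}.

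First I would establish the a priori bound announced just before the corollary: any solution $u$ of \eqref{pde} with $u^0\le C\bar u$ satisfies $u(t,\cdot)\le C\bar u$ for all $t$. Setting $w=u/\bar u$, a direct computation using $\partial_x(a\bar u)=F\bar u$ shows that $\partial_x(au)-Fu=a\bar u\,\partial_x w$, so that \eqref{pde} becomes the divergence-form, zeroth-order-free equation
$$\bar u\,\partial_t w=\partial_x\big(a\,\bar u\,\partial_x w\big),$$
for which the maximum principle gives $\sup_x w(t,x)\le\sup_x w(0,x)=\sup_x u^0/\bar u\le C$. One carries this out first for the approximations, where $a_n$ is uniformly elliptic and the estimate is classical, and then passes to the limit. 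Hence $u(t,\cdot)\le C\bar u$.

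With this bound, and using $\bar u\le e^{\|F/a\|_{L^1}}/a$, the two hypotheses \eqref{eq:hyp-1d-better} translate directly into the quantities Theorem~\ref{theo1d} requires. Since $\sigma$ and $F$ are time-independent,
$$\|\sigma\|_{H^{1/2}(u)}^2=\int_0^T\!\!\int_\R (M|\partial^{1/2}\sigma|)^2\,u(t,dx)\,dt\le C\,T\int_\R\frac{(M|\partial^{1/2}\sigma|(x))^2}{a(x)}\,dx<\infty,$$
and likewise $\|F\|_{W^{1,1}(u)}^2\le C\,T\int_\R (M|\nabla F|/a)\,dx<\infty$, the finiteness being exactly \eqref{eq:hyp-1d-better}. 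The same estimates hold uniformly in $n$ for the approximate measures $u_n$ once we know $u_n\le C\bar u$ uniformly.

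The remaining, and I expect most delicate, step is the construction of the smooth $\sigma_n,F_n\in L^\infty$ demanded by Theorem~\ref{theo1d}(i): they must satisfy \eqref{ellipticity'} so that $u_n$ is defined, must converge so that $\int_0^T\!\int_\R(|\sigma_n-\sigma|+|F_n-F|)\,du_n\,dt\to0$ with $u_n\to u$ weak-$*$, and — the crux — must yield stationary densities $\bar u_n=a_n^{-1}\exp(\int_0^x F_n/a_n)$ dominated by $C\bar u$ \emph{uniformly} in $n$, so that the a priori bound and the uniform norm control persist. The obstacle is that \eqref{ellipticity'} is only pointwise non-degeneracy ($a>0$, possibly $a\to0$), so a naive mollification need not keep $F_n/a_n$ bounded in $L^1$ nor $\bar u_n$ comparable to $\bar u$; the natural remedy is to mollify while arranging $F_n/a_n\to F/a$ in $L^1$ (possible since $F/a\in L^1$ and $a>0$), which keeps the exponential factors uniformly pinched. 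Granting such a sequence, Theorem~\ref{theo1d}(i) yields strong existence with marginals $u(t,\cdot)\le C\bar u\in L^1$. For pathwise uniqueness, any two solutions $X,Y$ with $X_0=Y_0$ have marginals $u_X,u_Y$ solving \eqref{pde} with initial datum $u^0\le C\bar u$, so the maximum principle above gives $u_X,u_Y\le C\bar u$; the norm computation then verifies the hypotheses of Theorem~\ref{theo1d}(ii), giving $\sup_{t\in[0,T]}|X_t-Y_t|=0$ a.s.
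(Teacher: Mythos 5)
Your reduction to Theorem~\ref{theo1d} --- identifying $\bar u(x)=a(x)^{-1}\exp\bigl(\int_0^x F/a\bigr)$ as the zero-flux stationary profile, using $F/a\in L^1$ to pin the exponential factor between two positive constants so that $\bar u\sim 1/a$, and converting the two integrals in \eqref{eq:hyp-1d-better} into finiteness of $\|\sigma\|_{H^{1/2}(u)}$ and $\|F\|_{W^{1,1}(u)}$ --- is exactly the route the paper intends: the text states the bound $u(t,x)\le C\bar u(x)$ just before the corollary and gives no further detail, so your maximum-principle derivation for the approximations and your discussion of what the mollified sequence $\sigma_n,F_n$ must satisfy are a reasonable filling-in of part (i).

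The genuine gap is in your uniqueness step. You apply the comparison principle to the marginals $u_X,u_Y$ of two \emph{arbitrary} solutions $X,Y$. But these are only measure-valued weak solutions of the degenerate equation \eqref{pde} with merely measurable, possibly vanishing $a$; your argument (rewrite the equation for $w=u/\bar u$ in divergence form and invoke the parabolic maximum principle) is justified for smooth uniformly elliptic approximations and for their limits, not for a weak solution handed to you with no regularity. Without uniqueness of measure-valued solutions to \eqref{pde} you cannot exclude solutions violating $u\le C\bar u$, and then the hypotheses of Theorem~\ref{theo1d}(ii) are simply not verified for $u_X,u_Y$. This is precisely the difficulty the paper flags at the end of Subsection~\ref{sec:gal-res}, and it is resolved by a different mechanism: the assumptions \eqref{eq:hyp-1d-better} force $a^{-1}\in L^1_{loc}$ (the maximal functions appearing there are locally bounded below by positive constants unless $\sigma$ and $F$ are constant), which by Engelbert--Schmidt \cite{ES1} is exactly the necessary and sufficient condition for uniqueness in law of \eqref{sde} with bounded drift. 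Uniqueness in law identifies $u_X=u_Y=u$ with the law constructed in the existence part, for which $u\le C\bar u$ does hold, and only then does Theorem~\ref{theo1d}(ii) apply. You need this step (or an independent proof that \emph{every} measure-valued solution of \eqref{pde} obeys the comparison bound) to close the argument.
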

Note that the assumptions~\eqref{eq:hyp-1d-better} imply that $a^{-1}\in L^1_{loc}$, which is a necessary and sufficient condition
for uniqueness in law when $F$ is bounded~\cite{ES1}.

We will prove in Lemma~\ref{maximal1/2} of Section~\ref{sec:technical} that for all $x,\;y$
\begin{equation}
  \label{eq:H1/2-ineq}
  |\sigma(x)-\sigma(y)|\leq \left(M|\partial^{1/2}_x \sigma|(x)+M|\partial^{1/2}_x \sigma|(y)\right)\,|x-y|^{1/2}.  
\end{equation}
This inequality allows us to compare our result with similar results of the literature~\cite{YW71,Zv74,LG,ES1}. The best conditions
in the time homogeneous case seem to be those of~\cite[Thm.~5.53]{ES1} and~\cite{YW71}. The first work assumes that
$|\sigma(x)-\sigma(y)|\leq C\sqrt{|x-y|}$ and that $F/a\in L^1_{loc}$, so our result gives better conditions for $\sigma$, but worse
conditions on $F$. The second work considers the time-inhomogeneous case and assumes that $F(t,x)$ is uniformly Lipschitz in the
variable $x$ and $|\sigma(t,x)-\sigma(t,y)|\leq h(|x-y|)$ where the function $h$ is strictly increasing and satisfies
$\int_{0^+}\rho^{-2}(u)du=+\infty$. Our conditions on $F$ are better, our conditions on $\sigma$ do not directly compare with the
last ones. However a typical scale for $h$ is  $h(u)=\sqrt{u}$ and in that case, by~(\ref{eq:H1/2-ineq}), our bound on $|\sigma(x)-\sigma(y)|$ is less
stringent. We can also compare with known results in the case where $F=0$. The best result in this case seems to be the one
of~\cite{ES1}, which assumes that there exist a function $f$ on $\RR$ and a strictly increasing function $h$ on $\RR_+$ such that
$|\sigma(y)-\sigma(x)|\leq f(x) h(|y-x|)$ for all $x,y$, $f/a\in L^1_{loc}$ and $\int_{0^+}\rho^{-2}(u)du=+\infty$. Our result is
slightly weaker than this one since we must take $h(u)=\sqrt{u}$ as above, but the bound~(\ref{eq:H1/2-ineq}) is more general than the
one of~\cite{ES1}.
\medskip


We point out that, in higher dimension as well, ellipticity is not always required for bounds on the law . We give the classical
example of SDE's in the phase space $\mathbb{R}^{2d}$
\begin{equation}
dX_t=V_t,\quad dV_t=F(t,X_t)dt+\sigma(t,X_t)\,dW_t,\quad X_0=x,\ V_0=v.\label{sdepspace}
\end{equation}
The law $u(t,x,v)$ of the joint process $(X_t,V_t)_{t\geq 0}$ solves the kinetic equation
\begin{equation}
\partial_t u(t,x,v)+v\cdot\nabla_x u(t,x,v)+F(t,x)\cdot\nabla_v u(t,x,v)=\sum_{1\leq i,j\leq d}a_{ij}(t,x)\frac{\partial^2
  u(t,x,v)}{\partial v_i\partial v_j}.\label{kinetic}
\end{equation}
Eq.~\eqref{kinetic} is in fact better behaved than \eqref{pde} for rough coefficients as its symplectic structure for instance
guarantees that it satisfies a maximum principle for all measure-valued solutions: note that the rough coefficients are only in $t,x$ and
always multiply derivatives in $v$. 
In particular there is uniqueness among
all measure-valued solutions, and if $u^0\in L^\infty(\R^{2d})$ then $u\in L^\infty(\R_+\times\R^{2d})$. This is true even though the
diffusion in \eqref{sdepspace} is degenerate (there is no diffusion in the $x$ direction, and $\sigma$ can also be degenerate). 

Hence in
this situation, one obtains an even better result.
\begin{cor}
  \label{cor:kinetic}
  Assume that $\sigma \in L^\infty\cap L^2_{t,loc}(H^{1}_x)$ and $F\in L^1_{t,loc}(W^{1,1}_x)$. Assume also that $u_0\in
  L^{\infty}$. Then one has both existence of a strong solution to \eqref{sdepspace} and
  pathwise uniqueness.
\end{cor}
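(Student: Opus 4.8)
The plan is to read~\eqref{sdepspace} as a single degenerate SDE of the form~\eqref{sde} on the phase space $\R^{2d}$, for the variable $Z_t=(X_t,V_t)$, with drift $\tilde F(t,x,v)=(v,F(t,x))$ and diffusion $\tilde\sigma(t,x,v)=\bigl(0,\sigma(t,x)\bigr)^*$, and then to apply Theorem~\ref{maintheo}. The essential structural features are that the rough coefficients $\sigma,F$ depend only on $(t,x)$ and enter only the $v$-component, that the diffusion is degenerate (no noise in $x$, and $\sigma$ itself need not be elliptic), and that the only unbounded part of the drift is the smooth, globally Lipschitz term $v$. First I would record the a priori bound: the transport field $(v,F)$ is divergence free on $\R^{2d}$ and the diffusion acts only through $v$-derivatives, so~\eqref{kinetic} obeys a maximum principle; hence for $u^0\in L^\infty$ one has $\|u_n(t)\|_{L^\infty(\R^{2d})}\le\|u^0\|_{L^\infty}$ uniformly in $n$ and in $t\in[0,T]$, together with uniqueness in law among all measure-valued solutions. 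This is the feature that makes the degenerate case tractable.

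The core of the argument is to convert this phase-space bound into the weighted norms required by Theorem~\ref{maintheo}. Since $\tilde\sigma$ and $\tilde F$ are independent of $v$, the full gradient reduces to $\nabla_x$, and one checks that the $2d$-dimensional maximal function of a $v$-independent function is comparable to its $d$-dimensional maximal function in $x$; consequently
\[
\|\tilde\sigma\|_{H^1(u_n)}^2\lesssim \|\sigma\|_{L^\infty}^2\,T+\int_0^T\!\!\int_{\R^d}(M|\nabla_x\sigma|)^2\,\rho_n(t,x)\,dx\,dt,
\]
where $\rho_n(t,\cdot)=\int_{\R^d}u_n(t,\cdot,v)\,dv$ is the spatial marginal, and similarly for $\|\tilde F\|_{W^{\phi,weak}(u_n)}$. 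To control these I would prove a uniform bound $\rho_n\in L^\infty([0,T];L^\infty_x)$: because all the driving vector fields $(v,F_n)$ and $(0,\sigma_n)$ are divergence free (and It\^o and Stratonovich coincide here, $\sigma_n$ being $v$-independent), the stochastic flow is volume preserving for each realization of $W$; combined with the bounded drift and the bounded quadratic variation of the martingale part of $V$ (so that $\sup_{[0,T]}|V_t|$ has all moments by Burkholder--Davis--Gundy), this confines the velocity to a ball of finite random radius with all moments, and volume preservation then yields a uniform pointwise bound on $\rho_n$. With $\rho_n$ bounded, the boundedness of $M$ on $L^2$ turns $\sigma\in L^2_{t,loc}(H^1_x)$ into $\sup_n\|\tilde\sigma\|_{H^1(u_n)}<\infty$, and the device of~\eqref{W11} (recalled in the appendix, following~\cite{Ja}) turns $F\in L^1_{t,loc}(W^{1,1}_x)$ into $\sup_n\|\tilde F\|_{W^{\phi,weak}(u_n)}<\infty$ for a suitable super linear $\phi$.

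The remaining hypotheses of Theorem~\ref{maintheo} are then routine. I would take $\sigma_n,F_n$ to be mollifications of $\sigma,F$ in $x$ (smooth, with $\sup_n\|\sigma_n\|_\infty+\|F_n\|_\infty<\infty$), so that~\eqref{eq:hyp-1} follows from $L^1_{loc}$ convergence of the mollifications tested against the bounded marginal $\rho_n$, while~\eqref{eq:hyp-3} follows from uniqueness in law together with tightness of the $u_n$. Theorem~\ref{maintheo}~(i) then gives a strong solution and the stated $L^p(\Omega,L^\infty)$ convergence, and Theorem~\ref{maintheo}~(ii) gives pathwise uniqueness; since uniqueness in law forces $u_X=u_Y=u\in L^\infty$ for any two solutions, the uniqueness statement holds without restriction on the class of solutions.

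The hard part is twofold, and both difficulties stem from the degeneracy. First, the unbounded linear drift $v$ takes us formally outside the $L^\infty$ hypothesis of Theorem~\ref{maintheo}; this is harmless because $v\mapsto v$ is globally Lipschitz and therefore contributes only a Gronwall factor to the Crippa--De Lellis type functional used in the proof of Theorem~\ref{maintheo}, but it must be checked that that functional estimate survives the substitution. Second, and more seriously, obtaining the uniform spatial-marginal bound $\rho_n\in L^\infty$ is delicate precisely because no ellipticity is assumed: one cannot invoke any smoothing of the Fokker--Planck equation in $x$ and must instead exploit the exact volume-preserving (symplectic) structure of the flow and the moment bounds on the velocity, which is the real mechanism replacing ellipticity in this kinetic setting.
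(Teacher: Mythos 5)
Your overall strategy --- recast \eqref{sdepspace} as a degenerate SDE on $\R^{2d}$ for $Z_t=(X_t,V_t)$, use the transport structure of \eqref{kinetic} to get an $L^\infty$ bound and uniqueness among measure-valued solutions, and then feed the resulting bounds into Theorem~\ref{maintheo} --- is the same as the paper's. Your observation that the $2d$-dimensional maximal function of a $v$-independent function is comparable to the $d$-dimensional one in $x$, so that $\|\tilde\sigma\|_{H^1(u_n)}$ and $\|\tilde F\|_{W^{\phi,\text{weak}}(u_n)}$ reduce to integrals of $(M|\nabla_x\sigma|)^2$ and $M_L\nabla_x F$ against the spatial marginal $\rho_n(t,x)=\int u_n(t,x,v)\,dv$, is a correct and in fact unavoidable reduction (the paper passes over it silently, invoking only $u\in L^\infty(\R_+\times\R^{2d})$). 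Your handling of the unbounded drift component $v$ as a globally Lipschitz perturbation contributing only a bounded term to the logarithmic functional is also fine.

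The gap is the step ``$\rho_n\in L^\infty([0,T]\times\R^d_x)$''. Volume preservation of the flow only reproduces the phase-space bound $\|u_n(t)\|_{L^\infty(\R^{2d})}\le\|u^0\|_{L^\infty}$, and the velocity bounds you extract from Burkholder--Davis--Gundy are \emph{global} moments $\int |v|^k\,u_n\,dx\,dv$ (and even these require moments of $V_0$, which $u^0\in L^1\cap L^\infty$ does not provide); neither gives a pointwise-in-$x$ control of $\int u_n(t,x,v)\,dv$. Concretely, already for free streaming ($F=\sigma=0$, admissible under the hypotheses), the data $u^0(x,v)=\mathbbm{1}_{|x+v|\le 1/R}\,\mathbbm{1}_{|v|\le R}$ has $\|u^0\|_{L^\infty}=1$ and $\|u^0\|_{L^1}$ of order one, yet the solution $u(t,x,v)=u^0(x-tv,v)$ satisfies $\rho(1,0)=\int \mathbbm{1}_{|v|\le R}\,dv\sim R^d$, which is arbitrarily large; so no bound on the marginal in terms of $\|u^0\|_{L^1\cap L^\infty(\R^{2d})}$ can hold, and the same example shows that $\int (M|\nabla_x\sigma|)^2\,du_n$ cannot be estimated using only $u_n\in L^1\cap L^\infty(\R^{2d})$. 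To close the argument one needs an additional hypothesis on the data that survives the volume-preserving dynamics with bounded $F,\sigma$ --- e.g.\ $u^0\in L^\infty_v(L^1_x)$, compact support in $v$, or a pointwise velocity moment $\sup_x\int|v|^k u^0(x,v)\,dv<\infty$, any of which yields $\rho_n\in L^\infty$ by splitting $\int_{|v|\le R}+\int_{|v|>R}$ and optimizing in $R$. The paper does not supply this missing marginal estimate either, so your proposal is faithful to its intent; but the one genuinely new lemma you introduce to fill the hole is precisely the statement that fails under the stated hypotheses.
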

 \medskip

To conclude, let us observe that most of the previous results give strong existence for smooth (and so non deterministic) initial
distributions. However, one can use the next result to obtain strong existence and pathwise uniqueness for almost all deterministic
initial conditions.
\begin{prop}
  \label{prop:ae-well-posedness}
  Under the assumptions of either Corollary~\ref{cor:1d}, Corollary~\ref{cor:better-v2}, Corollary~\ref{cor:1d-better} or
  Corollary~\ref{cor:kinetic}, for any complete filtered probability space $(\Omega,(\mathcal{F}_t)_{t\geq 0},\PP)$ equipped with a
  $r$-dimensional standard Brownian motion $W$, there is strong existence and pathwise uniqueness for~\eqref{sde} on
  $(\Omega,(\mathcal{F}_t)_{t\geq 0},\PP,W)$ for almost all deterministic initial condition
  $\xi=x\in\mathbb{R}^d$.
\end{prop}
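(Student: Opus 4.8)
The plan is to transfer the well-posedness obtained for a well-chosen random initial law down to almost every deterministic starting point by a disintegration argument. The decisive common feature of Corollaries~\ref{cor:1d}, \ref{cor:better-v2}, \ref{cor:1d-better} and~\ref{cor:kinetic} is that in each of these settings uniqueness in law holds for~\eqref{sde} for every initial condition, which (as explained just before Subsection~\ref{sec:csq}) is precisely what makes the pathwise uniqueness asserted in those corollaries unrestricted, i.e.\ valid among \emph{all} solutions with a given initial law and not only those whose marginals lie in the integrability class of Theorem~\ref{maintheo} or~\ref{theo1d}. I would first fix a reference density $u^0$ that is smooth, strictly positive on $\RR^d$ (so that $u^0$-a.e.\ and Lebesgue-a.e.\ coincide) and satisfies the hypotheses of the relevant corollary: a bounded integrable density suffices in the elliptic and kinetic cases, while in the one-dimensional cases one chooses $u^0$ below the explicit stationary profile. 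The corollary then yields, on a space carrying $W$ and an independent $\xi\sim u^0$, a strong solution that can be written $X=\Phi(\xi,W)$ for a measurable adapted functional $\Phi$ of the starting point and the Brownian path, together with pathwise uniqueness for the initial law $u^0$.

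For strong existence at almost every deterministic point I would use that the regularised equations~\eqref{sden} have smooth coefficients, hence genuine stochastic flows $\Phi^n$ with $t\mapsto\Phi^n(x,W)$ solving~\eqref{sden} for \emph{every} $x$. The corollary provides $\Phi^n(\xi,W)\to\Phi(\xi,W)$ in $L^p(\Omega,L^\infty([0,T]))$; writing this convergence as $\int_{\RR^d}\EE_W[\,\cdot\,]\,u^0(dx)\to 0$ and extracting a subsequence shows that for $u^0$-almost every $x$ the processes $\Phi^n(x,W)$ converge in $L^p$ to $\Phi(x,W)$ and that $X^x:=\Phi(x,W)$ solves~\eqref{sde} with $X^x_0=x$, the passage to the limit in the drift and diffusion terms being the one already carried out in the proof of the corollary. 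Because $\Phi(x,\cdot)$ is a functional of the path of $W$ alone, $X^x$ is adapted on any prescribed complete space carrying $W$, so this produces a genuine strong solution for Lebesgue-almost every $x$.

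The real point is pathwise uniqueness at almost every $x$, which I would prove by contradiction on a suitable enlargement of the given space carrying an independent $\xi\sim u^0$. Suppose the set $A$ of points $x$ admitting a solution $Y^x$ with $Y^x_0=x$ and $\PP(Y^x\neq\Phi(x,W))>0$ had positive Lebesgue measure, hence positive $u^0$-measure. After a measurable selection $x\mapsto Y^x$ one sets $\tilde X=Y^\xi$; this is again a solution of~\eqref{sde} driven by $W$ and started from $\xi\sim u^0$. Since $X=\Phi(\xi,W)$ and $\tilde X$ are two solutions on the same space, with the same Brownian motion and the same $\mathcal{F}_0$-measurable initial condition $\xi$, the unrestricted pathwise uniqueness for the law $u^0$ forces $\Phi(\xi,W)=Y^\xi$ almost surely, which by Fubini contradicts $u^0(A)>0$. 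Hence $A$ is Lebesgue-null and pathwise uniqueness holds for almost every deterministic $x$.

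I expect the main obstacle to be the measurable selection $x\mapsto Y^x$ and the verification that the mixture $Y^\xi$ is a bona fide solution driven by the given $W$, that is, that the stochastic integrals remain consistent under conditioning on the value of $\xi$. This is most cleanly handled through the martingale-problem formulation and regular conditional probabilities, for which the family of solutions started at $x$ depends measurably on $x$ and the disintegration of the initial law is automatic; the remaining steps are routine applications of the four corollaries together with Fubini's theorem.
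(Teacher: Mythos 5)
Your existence argument is essentially the paper's: put the product measure $u^0(x)\,dx\otimes\PP(d\omega)$ on $\RR^d\times\Omega$, apply the corollary there, and disintegrate; the extraction of a subsequence along which $\Phi^{n_k}(x,W)$ is Cauchy in $L^p(\Omega,L^\infty)$ for $u^0$-a.e.\ $x$ is fine. The uniqueness half, however, is where the real content lies, and your route has a genuine gap exactly at the point you flag as ``the main obstacle''. Your contradiction argument needs (a) the set $A=\{x:\ \exists$ a solution $Y^x$ on the \emph{given} space with $\PP(Y^x\neq\Phi(x,W))>0\}$ to be Lebesgue measurable, and (b) a selection $x\mapsto Y^x$ that is jointly measurable in $(x,\omega)$ and compatible with the stochastic integral, so that $Y^\xi$ is a bona fide solution on the enlarged space. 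Neither is routine: $A$ is a priori only a projection, and the objects to be selected are \emph{strong} solutions on an arbitrary prescribed complete filtered probability space, which carries no canonical standard Borel structure on which to run Jankov--von Neumann type selection. The martingale-problem/regular-conditional-probability machinery you invoke operates on \emph{laws} on path space and produces measurable families of weak solutions; it does not by itself yield a measurable family of strong solutions on the given $(\Omega,(\mathcal{F}_t),\PP,W)$, nor the measurability of $A$. Moreover, even granting the selection, the argument only controls the one selected bad solution per $x$; to get the full statement (all pairs of solutions coincide for each $x$ outside a single null set) you must rerun the contradiction for every conceivable selection, which again hinges on the unproved measurability of $A$.

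The paper's proof is designed precisely to avoid all of this. For each fixed $x$ and \emph{any} two strong solutions $X^x,\hat X^x$ started at $x$, it reruns the estimate of Lemma~\ref{lem:1} and uses uniqueness in law (valid for every deterministic initial condition in the four corollaries) to replace the law of $\hat X^x$ by that of $X^x$; the resulting upper bound $M^\varepsilon_t(x)$ depends only on the unique one-dimensional marginals of $X^x$ and is therefore a measurable function of $x$ --- the paper explicitly notes that the left-hand side $\EE\log(1+|X^x_t-\hat X^x_t|^2/\varepsilon^2)$ need \emph{not} be measurable in $x$, which is exactly the trap your approach walks into. Integrating $M^\varepsilon_t(x)$ against $u^0$ and using $u\in L^\infty$ (so that $\|\sigma\|_{H^1(u)}$ and $\|F\|_{W^{\phi,weak}(u)}$ are finite) gives $\int N^\varepsilon_t(x)u^0(x)\,dx\to0$ with $\EE|X^x_t-\hat X^x_t|\le N^\varepsilon_t(x)$, and an a.e.-convergent subsequence plus a diagonal argument over a countable dense set of times yields a \emph{universal} null set outside of which every pair of solutions coincides. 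If you want to keep a ``soft'' argument in the spirit of yours, you would at minimum need to make the selection step precise (or replace it by a Cherny-type theorem deducing pathwise uniqueness from uniqueness in law plus existence of a strong solution); as written, the proposal defers the only genuinely difficult step of the proof.
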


The proofs of the previous results are organized as follows. We start in Section~\ref{sec:technical} with some simple technical
proofs, including those of Propositions~\ref{semicont},~\ref{semicont2} and~\ref{semicont3}, Section~\ref{proofmaintheo} is then
devoted to the proof of Theorem~\ref{maintheo}, Section~\ref{sec:pf-1d} to the proof of Theorem~\ref{theo1d},
Section~\ref{sec:pf-prop-apriori} to the proof of Proposition~\ref{apriorilp}, and Section~\ref{sec:pf-prop-ae-wp} to the proof of
Proposition~\ref{prop:ae-well-posedness}. The proof of Theorem~\ref{maincorollary} is given in Appendix~\ref{sec:pf-cor}.
\section{Useful technical results}
\label{sec:technical}
The results and proofs presented in this section are mostly easy extensions
of well-known techniques, which we need in following sections and hence include here for the sake of completeness.
\subsection{Pointwise difference estimates}
We often need to estimate the difference of the coefficients $\sigma$ of $F$ at two different points $x$ and $y$ during the proofs. Hence we collect here all the results which allow us to do so and that we later use.
In all those estimates, time is only a parameter and we accordingly omit the time variable in most formulas.
 
We start by recalling the classical
inequality (see \cite{St} for instance)
\begin{equation}
|\sigma(t,x)-\sigma(t,y)|\leq (M|\nabla_x \sigma|(t,x)+M|\nabla_x \sigma|(t,y))\,|x-y|. \label{maximal}
\end{equation} 
We next turn to an extension with the operator $M_L$ used in the definition \eqref{W11} 
\begin{lem} Assume that $F$ is measurable and $|\nabla F|$ is a measure then for any $x,\;y\in\R^d$
\[
|F(t,x)-F(t,y)|\leq (h(t,x)+h(t,y))\,\left(|x-y|+\frac{1}{L}\right),
\]
with $h(t,x)=|F(t,x)|+M_L\nabla F(t,x)$.
\label{maximalF}
\end{lem}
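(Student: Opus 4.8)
The plan is to adapt the proof of the classical pointwise inequality \eqref{maximal} to the truncated kernel defining $M_L$, the crucial point being that the part of $\nabla F$ lying below the threshold $\sqrt{\log L}$ must contribute only a term linear in $|x-y|$. Since the truncation $\mathbbm{1}_{|\nabla F|\geq\sqrt{\log L}}$ is most transparent when $\nabla F$ is a genuine function, I would first reduce to smooth $F$ by mollification, prove the inequality for each mollification with the corresponding $h$, and then pass to the limit. In the general case where $|\nabla F|$ is only a measure, its singular part always lies above the threshold, so $M_L\nabla F$ is still well defined.

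I would then split according to the size of $r:=|x-y|$. If $r$ exceeds a fixed fraction of $1$, the inequality is immediate: since $|x-y|+\frac1L\geq r$ and $|F(x)|\leq h(x)$, $|F(y)|\leq h(y)$, one gets $|F(x)-F(y)|\leq h(x)+h(y)\leq C(h(x)+h(y))(|x-y|+\frac1L)$. So the content is the regime $r$ small, and I keep $r$ below a fixed fraction of $1$ precisely so that the auxiliary ball used below stays inside $B(x,1)$ and $B(y,1)$. In that regime I would use the Riesz-potential representation underlying \eqref{maximal}: letting $B$ be the ball centered at $(x+y)/2$ of radius $r$, which contains both $x$ and $y$ and satisfies $B\subset B(x,1)\cap B(y,1)$, one has the standard estimate
\[
|F(x)-F(y)|\leq C\int_B \frac{|\nabla F(z)|}{|x-z|^{d-1}}\,dz + C\int_B \frac{|\nabla F(z)|}{|y-z|^{d-1}}\,dz.
\]
It then suffices to bound the first integral by $C(r+\frac1L)h(x)$, the second being symmetric.

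The key step is to split $|\nabla F|=|\nabla F|\,\mathbbm{1}_{|\nabla F|\geq\sqrt{\log L}}+|\nabla F|\,\mathbbm{1}_{|\nabla F|<\sqrt{\log L}}$. On the above-threshold part I would write
\[
\frac{1}{|x-z|^{d-1}}=(L^{-1}+|x-z|)\cdot\frac{1}{(L^{-1}+|x-z|)\,|x-z|^{d-1}},
\]
bound the prefactor by $L^{-1}+Cr$ for $z\in B$, and dominate the remaining integral by the full integral over $B(x,1)$, which is exactly $M_L\nabla F(x)-\sqrt{\log L}\leq h(x)$. On the below-threshold part I would use $|\nabla F|<\sqrt{\log L}$ together with the elementary bound $\int_B |x-z|^{1-d}\,dz\leq Cr$, so that this piece is controlled by $C\sqrt{\log L}\,r\leq C\,h(x)\,r$, since $\sqrt{\log L}\leq M_L\nabla F(x)\leq h(x)$.

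Combining the two pieces gives $\int_B |\nabla F(z)|\,|x-z|^{1-d}\,dz\leq C(r+\frac1L)h(x)$, and with the symmetric bound at $y$ this yields the claim. The only genuinely delicate point is the below-threshold part: there one must use the ordinary kernel $|x-z|^{1-d}$, whose integral over $B$ is of order $r$, rather than the $M_L$ kernel, since the latter would produce a factor $\int_B \frac{dz}{(L^{-1}+|x-z|)|x-z|^{d-1}}\sim \log(1+Lr)$ and destroy the linear dependence on $|x-y|$ — this is precisely the mechanism for which the truncation in $M_L$ was introduced. Tracking the geometric constants from the potential estimate, the factor $L^{-1}+Cr$, and $\int_B|x-z|^{1-d}\,dz$ then produces the stated inequality (with a universal constant absorbed, as usual, in the conventions).
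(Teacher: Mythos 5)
Your proof is correct and follows essentially the same route as the paper's: the Riesz-potential estimate over a ball around $(x+y)/2$ (Lemma~\ref{lemma}), the splitting of $|\nabla F|$ at the threshold $\sqrt{\log L}$, and the absorption of the below-threshold part through the $\sqrt{\log L}$ term in $M_L$, up to the same harmless multiplicative constant. The only cosmetic difference is that you bound the prefactor $L^{-1}+|x-z|$ by $L^{-1}+C|x-y|$ uniformly, which yields the factor $|x-y|+1/L$ in one stroke, whereas the paper first disposes of the cases $|x-y|\leq 1/L$ and $|x-y|\geq 1$ and then uses $|x-z|+1/L\leq 2|x-y|$ on the remaining range.
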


\begin{proof} First observe that by the definition of $h$, the result is obvious if $|x-y|\leq 1/L$ or if $|x-y|\geq 1$. Assume now that $1/L\leq |x-y|\leq 1$.
We recall the Lemma from \cite{Ja}
\begin{lem} Assume $F$ is measurable and $|\nabla F|$ is a measure. 
There exists a constant $C$ s.t. for any $x$, $y$
\begin{equation}
|F(x)-F(y)|\leq C\,\int_{B(x,y)}
|\nabla F(z)|\,\left(\frac{1}{|x-z|^{d-1}}+\frac{1}{|y-z|^{d-1}}
\right)\,dz,\label{diff}
\end{equation}
where $B(x,y)$ denotes the ball of center $(x+y)/2$ and diameter
$|x-y|$. \label{lemma}
\end{lem}
Now $|\nabla F|\leq \sqrt{\log L}+|\nabla F|\,\ind_{|\nabla F|\geq \sqrt{log L}}$ and thus
\[
\frac{1}{|x-y|}\int_{B(x,y)}
\frac{|\nabla F(z)|}{|x-z|^{d-1}}\,dz\leq C\,\sqrt{\log L}+\int_{B(x,1)}\frac{|\nabla F(z)\,\ind_{|\nabla F|\geq \sqrt{log L}}|}{\,(1/L+|x-z|)\,|x-z|^{d-1}},
\]
where we used that if $z\in B(x,y)$ then $|x-z|+1/L\leq 2\,|x-y|$.
By the definition of $M_L$, this concludes the proof.\end{proof}

\medskip

Let us turn now to our last bound which uses $\partial^{1/2}_x \sigma$
\begin{lem} Assume that $\sigma$ and $\partial^{1/2}_x \sigma$ are measurable then for any $x,\;y$ 
\[\begin{split}
|\sigma(t,x)-\sigma(t,y)|\leq \left(M|\partial^{1/2}_x \sigma|(t,x)+M|\partial^{1/2}_x \sigma|(t,y)\right)\,|x-y|^{1/2}.
\end{split}\]\label{maximal1/2}
\end{lem}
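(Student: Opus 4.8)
\noindent The plan is to represent $\sigma$ as a Riesz potential of its half-derivative and then run a three-region argument parallel to the proofs of \eqref{maximal} and Lemma~\ref{lemma}, the essential new feature being the nonlocality of $\partial^{1/2}_x$. Since $\partial^{1/2}_x\sigma={\cal F}^{-1}(|\xi|^{1/2}{\cal F}\sigma)$, inverting the symbol gives $\sigma={\cal F}^{-1}(|\xi|^{-1/2}{\cal F}(\partial^{1/2}_x\sigma))$, i.e.\ $\sigma$ is the convolution of $\partial^{1/2}_x\sigma$ with the Riesz kernel $k(w)=c_d\,|w|^{1/2-d}$. I would read off the basic identity
\[
\sigma(t,x)-\sigma(t,y)=c_d\int_{\R^d}\partial^{1/2}_x\sigma(t,z)\,\big(k(x-z)-k(y-z)\big)\,dz,
\]
in which time is a mere parameter. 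The claimed bound is trivial whenever $M|\partial^{1/2}_x\sigma|(t,x)$ or $M|\partial^{1/2}_x\sigma|(t,y)$ is infinite, so I may assume both finite; under this assumption the difference integral converges absolutely (the individual potentials need not), and one legitimizes the identity first for smooth $\sigma$ and then by a mollification/limiting argument.

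\noindent Writing $g=|\partial^{1/2}_x\sigma|$ and $r=|x-y|$, I split $\R^d$ into $B(x,2r)$, $B(y,2r)$ and the far set $E=\R^d\setminus(B(x,2r)\cup B(y,2r))$, estimating the two kernels $k(x-z)$ and $k(y-z)$ separately on the balls and only their difference on $E$. The key near-field estimate is that, for any $R>0$,
\[
\int_{B(x,R)}\frac{g(z)}{|x-z|^{d-1/2}}\,dz\leq C\,R^{1/2}\,Mg(x),
\]
obtained by summing over the dyadic annuli $2^{-k-1}R<|x-z|\leq 2^{-k}R$ and bounding each annular integral of $g$ by $C(2^{-k}R)^d Mg(x)$ directly from the definition of $M$, the geometric series in $(2^{-k}R)^{1/2}$ then converging. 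Applying this on $B(x,2r)$, and using $B(x,2r)\subset B(y,3r)$ to dispatch the $k(y-z)$ piece there (and symmetrically on $B(y,2r)$), controls the whole near field by $C\,r^{1/2}(Mg(x)+Mg(y))$.

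\noindent On the far set $E$ one has $|x-z|,|y-z|\geq 2r$ with $|x-z|\simeq|y-z|$, so by the mean value theorem applied to $k$ (whose gradient is of size $|w|^{-(d+1/2)}$) one gets $|k(x-z)-k(y-z)|\leq C\,r\,|x-z|^{-(d+1/2)}$. A dyadic decomposition into the outward annuli $2^{k}(2r)<|x-z|\leq 2^{k+1}(2r)$, bounding again each annular integral of $g$ by the maximal function, yields $\int_E g(z)\,|x-z|^{-(d+1/2)}\,dz\leq C\,r^{-1/2}Mg(x)$, so that the far-field contribution is at most $C\,r\cdot r^{-1/2}Mg(x)=C\,r^{1/2}Mg(x)$. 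Collecting the three pieces gives $|\sigma(t,x)-\sigma(t,y)|\leq C\,(Mg(x)+Mg(y))\,|x-y|^{1/2}$, which is the assertion, the dimensional constant being suppressed exactly as in \eqref{maximal}.

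\noindent I expect the main obstacle to be precisely the nonlocality that distinguishes this statement from Lemma~\ref{lemma}: there the representation is supported on $B(x,y)$ and no tail at infinity appears, whereas here the Riesz kernel is global, so absolute convergence must be extracted from the faster decay of the kernel \emph{difference} and the individual potentials are only conditionally defined. Making the difference identity rigorous under mere measurability of $\sigma$ and $\partial^{1/2}_x\sigma$ (rather than for Schwartz functions) is the delicate point, which I would handle by the triviality-when-infinite reduction together with the mollification argument mentioned above; the three annular summations themselves are routine once the identity is in place.
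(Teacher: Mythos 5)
Your proposal is correct and follows essentially the same route as the paper: both write $\sigma$ as the convolution of $\partial^{1/2}_x\sigma$ with the Riesz kernel whose Fourier transform is $|\xi|^{-1/2}$, estimate the two kernels separately on the near region of radius $\sim|x-y|$ via dyadic annuli and the maximal function, and use the kernel \emph{difference} with the gradient bound $|\nabla K(w)|\leq C|w|^{-(d+1/2)}$ on the far region. The only (immaterial) differences are your three-region splitting versus the paper's two regions, and your added care about absolute convergence and the constant $C$, which the paper suppresses as well.
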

\begin{proof}
By the definition of $\partial^{1/2}_x\sigma$
\[
\sigma(x)=K\star \partial^{1/2}_x \sigma,
\]
for the convolution kernel $K$ with $\hat K=|\xi|^{-1/2}$, which implies that
\begin{equation}
|K(x)|\leq C\,|x|^{d-1/2},\qquad |\nabla K(x)|\leq C\,|x|^{d+1/2}.\label{boundK}
\end{equation}
Now simply compute
\[\begin{split}
|\sigma(x)-\sigma(y)|\leq &\int_{|z-x|\geq2\,|x-y|} |K(x-z)-K(y-z)|\,|\partial^{1/2}_x \sigma(z)|\,dz\\
&+\int_{|z-x|\leq2\,|x-y|} (|K(x-z)|+|K(y-z)|)\,|\partial^{1/2}_x \sigma(z)|\,dz.
\end{split}\]
Denote $|x-y|=r$. One has by \eqref{boundK}
\[\begin{split}
\int_{|z-x|\leq2\,r} |K(x-z)|\,&|\partial^{1/2}_x \sigma(z)|\,dz\leq \sum_{n\geq -1} \int_{|z-x|\leq 2^{-n}\,r} \frac{2^{n(d-1/2)}}{r^{d-1/2}} \,|\partial^{1/2}_x \sigma(z)|\,dz\\
&\leq \sum_{n\geq -1} 2^{-n/2} r^{1/2}\,M |\partial^{1/2}_x \sigma|(x)=C\,r^{1/2}\,M |\partial^{1/2}_x \sigma|(x).
\end{split}\]
Since $|z-x|\leq 2r$ implies that $|z-y|\leq 3r$, one has the same inequality
\[
\int_{|z-x|\leq2\,r} |K(y-z)|\,|\partial^{1/2}_x \sigma(z)|\,dz\leq C\,r^{1/2}\,M |\partial^{1/2}_x \sigma|(y).
\]
As for the last term, first note that if $|x-z|\geq2\,|x-y|$ then $|y-z|\geq |x-z|/2$. Hence by \eqref{boundK} if $|x-z|\geq2\,|x-y|$
\[
|K(x-z)-K(y-z)|\leq C\,\frac{|x-y|}{|x-z|^{d+1/2}}.
\]
Therefore
\[\begin{split}
\int_{|z-x|\geq2\,|x-y|} &|K(x-z)-K(y-z)|\,|\partial^{1/2}_x \sigma(z)|\,dz\\
&\leq \sum_{n\geq 1} \int_{|z-x|\geq 2^n\,r} C\,\frac{r}{(2^n r)^{d+1/2}}\,|\partial^{1/2}_x \sigma(z)|\,dz\\
&\leq C\,r^{1/2}\sum_{n\geq 1} 2^{-n/2}\,M |\partial^{1/2}_x \sigma|(x)\leq C\,r^{1/2}\,M |\partial^{1/2}_x \sigma|(x).
\end{split}\]
Summing up the three estimates concludes the proof.
\end{proof}
\subsection{Proof of Prop. \ref{semicont}, \ref{semicont2}, \ref{semicont3}}
Note that the time variable again plays essentially no role here.
We may of course assume that $u$ is not identically $0$. 

Let us first check that $\|.\|_{H^1(u)}$ is a norm. The triangle inequality and the equality $\|\lambda \sigma\|_{H^1(u)}=|\lambda|\,\|\sigma\|_{H^1(u)}$ are straightforward. Now assume that $\|\sigma\|_{H^1(u)}=0$.

This means that $\sigma=0$ and $M|\nabla \sigma|=0$ on the support of $u$. Since this support contains at least one point and by the definition of the maximal operator, $\nabla\sigma$ is identically $0$. Thus $\sigma$ is a constant which is necessarily $0$ as it has to vanish on the support of $u$. 

Now let $\sigma_n\longrightarrow \sigma$ in the sense of distributions. Take any point $x$ and any $c>1$
\[\begin{split}
\frac{1}{|B(0,r)|}\int_{B(0,r)} |\nabla\sigma|(x+z)\,dz&\leq \frac{1}{|B(0,r)|}\liminf \int_{B(0,cr)} |\nabla\sigma_n|(x+z)\,dz\\
&\leq c^d \liminf M|\nabla\sigma_n|(x). 
\end{split}\]
Taking now the supremum in $r$, we deduce that for any $c>1$
\[
M|\nabla \sigma|(x)\leq c^d \liminf M|\nabla\sigma_n|(x).
\]
Apply now Fatou's lemma and let $c$ go to $1$ to deduce
\[
\int (M|\nabla \sigma|(x))^2 u(dx)\leq \liminf \int (M|\nabla \sigma_n|(x))^2 u(dx).
\]
Using \eqref{maximal}, it is easy to deduce the same inequality on the full $H^1(u)$ norm. 

Let us now turn to the last part. Denote $f=|\nabla \sigma|$, $f$ is a non negative, measurable function, possibly with $+\infty$ values on large sets. Then $g=(Mf)^2$ is non negative, measurable, lower semi-continuous and again possibly with $+\infty$ values on large sets, see \cite{St} for instance. Note that for any positive measure $\mu$
\[
\int g\,d\mu=\int_0^\infty \int \ind_{g(x)>\xi} \mu(dx)\,d\xi.
\]
Now assume $u_n\rightarrow u$ in the weak-* topology of $M^1$ with $u_n\geq 0$. Note that for any open set $O$
\[
\int_O du\leq \liminf \int_O du_n.
\]
Take $O=\{g(x)>\xi\}$ which is open by the lower semi-continuity of $g$. Therefore
\[
\int g\,du\leq \liminf\int g\,du_n,
\]
which finishes the proof of Prop. \ref{semicont}.

Prop. \ref{semicont2} and \ref{semicont3} are proved in exactly the same manner.
\section{Proof of Theorem \ref{maintheo}\label{proofmaintheo}}
We use two types of estimates; one is based on an explicit quantitative estimate which
generalizes the one in \cite{CD} for Ordinary Differential Equations and one which generalizes
the local time which is used in dimension $1$ in the classical approach~\cite{YW71,LG,ES1}. We
use the first quantitative estimate to prove existence and the second one to prove uniqueness
(though with suitable modifications any one could be used for both existence and uniqueness).

The first method is more precise but slightly more complicated than the second.
\subsection{Existence}
We consider the sequence of solutions to the regularized problem \eqref{sden}, and assume it satisfies the assumptions of Th.~\ref{maintheo}.
%
We fix $T>0$ in all the proof. The proof is based on estimates on the expectation of the
family of quantities
\begin{equation}
Q^{(\varepsilon)}_{nm}(t)=\log \left(1+ \frac{|X_t^n-X_t^m|^2}{\eps^2}\right),
\label{Qnm}
\end{equation}
given in the following lemma.

\begin{lem}
  \label{lem:1}
  There exists a constant $C$ such that, for all $0<\varepsilon\leq 1$ and $n,m\geq 1$,
  \begin{equation}
    \sup_{t\in[0,\ T]} \EE(Q^{(\eps)}_{nm}(t))\leq C\,(1+|\log \eps|\,\tilde \eta(\eps))+C\,\frac{\eta(n,m)}{\eps^2},\label{controlQ}
  \end{equation}
  where $\eta(n,m)\rightarrow 0$ when $n,m\rightarrow+\infty$ and $\tilde
  \eta(\eps):=(\varepsilon\phi(\varepsilon^{-1}))^{-1}\rightarrow 0$ when
  $\varepsilon\rightarrow 0$.
\end{lem}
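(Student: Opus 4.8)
The plan is to apply It\^o's formula to $Q^{(\eps)}_{nm}(t)=\Phi(Z_t)$, where $\Phi(z)=\log(1+|z|^2/\eps^2)$ and $Z_t=X^n_t-X^m_t$ solves
\[
dZ_t=\big(F_n(t,X^n_t)-F_m(t,X^m_t)\big)\,dt+\big(\sigma_n(t,X^n_t)-\sigma_m(t,X^m_t)\big)\,dW_t.
\]
Since $X^n_0=X^m_0=\xi$ we have $Z_0=0$, so taking expectations removes the stochastic integral and leaves a first-order (drift) term built from $\nabla\Phi$ and a second-order It\^o-correction term built from $\nabla^2\Phi$. The two elementary bounds I would use throughout are $|\nabla\Phi(z)|=2|z|/(\eps^2+|z|^2)\le\min(1/\eps,2/|z|)$ and, writing $A=\sigma_n(X^n_t)-\sigma_m(X^m_t)$,
\[
\mathrm{Tr}\big[\nabla^2\Phi(z)\,AA^*\big]=\frac{2|A|^2}{\eps^2+|z|^2}-\frac{4|A^*z|^2}{(\eps^2+|z|^2)^2}\le\frac{2|A|^2}{\eps^2+|z|^2},
\]
dropping the nonpositive last term. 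The essential asymmetry is that the drift is \emph{linear} in the coefficient difference, hence controlled by a single power of $M\nabla F$, whereas the It\^o term is \emph{quadratic}, which is precisely why $H^1(u)$ in \eqref{H1} carries the square of $M|\nabla\sigma|$.

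In both terms I would split $F_n(X^n)-F_m(X^m)$ and $\sigma_n(X^n)-\sigma_m(X^m)$ into a consistency error ($F_n(X^n)-F(X^n)$, etc.), the genuine difference ($F(X^n)-F(X^m)$, etc.), and a symmetric consistency error at $X^m$. For the genuine drift difference I apply Lemma~\ref{maximalF} with the choice $L=1/\eps$, which makes $|\nabla\Phi(Z_t)|\,(|Z_t|+1/L)\le C$ hold uniformly; the main drift contribution is then bounded by $C\,\EE\int_0^T(h(s,X^n_s)+h(s,X^m_s))\,ds$ with $h=|F|+M_L\nabla F$. Using that $u_n$ is the law of $X^n_s$, this equals $C\int_0^T\!\int_{\R^d}(|F|+M_L\nabla F)\,u_n(s,dx)\,ds$, which by definition \eqref{W11} is at most $C\,\frac{L\log L}{\phi(L)}\|F\|_{W^{\phi,weak}(u_n)}=C\,|\log\eps|\,\tilde\eta(\eps)\,\|F\|_{W^{\phi,weak}(u_n)}$. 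For the genuine diffusion difference I use \eqref{maximal} to get $|\sigma(X^n)-\sigma(X^m)|^2/(\eps^2+|Z_t|^2)\le 2(M|\nabla\sigma|(X^n)+M|\nabla\sigma|(X^m))^2$, whose space-time integral against $u_n,u_m$ is bounded by $C(\|\sigma\|^2_{H^1(u_n)}+\|\sigma\|^2_{H^1(u_m)})$. Both quantities are finite by \eqref{eq:hyp-2}, and together they produce exactly the $C(1+|\log\eps|\,\tilde\eta(\eps))$ term.

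For the consistency errors I would use that $F_n,F,\sigma_n,\sigma$ are bounded in $L^\infty$ uniformly in $n$ (from \eqref{eq:hyp-2} and the convergence \eqref{eq:hyp-1}). In the drift, $|\nabla\Phi|\le1/\eps$ yields a bound $\frac1\eps\int_0^T\!\int_{\R^d}(|F_n-F|\,u_n+|F_m-F|\,u_m)$; in the It\^o term I first write $|\sigma_n-\sigma|^2\le C|\sigma_n-\sigma|$ via the $L^\infty$ bound and then $1/(\eps^2+|Z_t|^2)\le1/\eps^2$ to obtain $\frac{C}{\eps^2}\int_0^T\!\int_{\R^d}(|\sigma_n-\sigma|\,u_n+|\sigma_m-\sigma|\,u_m)$. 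Setting $\eta(n,m)$ to be the sum of all four such space-time integrals, which tends to $0$ by \eqref{eq:hyp-1}, and using $\eps\le1$ so that $1/\eps\le1/\eps^2$, both error contributions are at most $C\eta(n,m)/\eps^2$. Summing the four estimates and taking $\sup_{t\in[0,T]}$ gives \eqref{controlQ}.

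The step I expect to be most delicate is the bookkeeping around the truncation level $L$: the choice $L=1/\eps$ must \emph{simultaneously} render $|\nabla\Phi(Z_t)|\,(|Z_t|+1/L)$ bounded and convert $\frac{L\log L}{\phi(L)}$ into the announced $|\log\eps|\,\tilde\eta(\eps)$, so that the superlinearity of $\phi$ is exactly what forces $\tilde\eta(\eps)\to 0$. One must also check that discarding the negative Hessian term is harmless (it only improves the upper bound) and that the $L^\infty$ control is genuinely available for the limits $F$ and $\sigma$; both are immediate from the hypotheses.
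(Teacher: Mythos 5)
Your proposal is correct and follows essentially the same route as the paper: It\^o's formula applied to the logarithmic functional, the gradient and Hessian bounds $C/(\eps+|z|)$ and $C/(\eps^2+|z|^2)$, the split into consistency errors (absorbed into $\eta(n,m)/\eps^2$ via the $L^\infty$ bounds and \eqref{eq:hyp-1}) and genuine differences, with \eqref{maximal} plus the $H^1(u_n)$ norm for $\sigma$ and Lemma~\ref{maximalF} at level $L=1/\eps$ plus the $W^{\phi,\text{weak}}(u_n)$ norm for $F$. The only cosmetic difference is that you compute the trace of the Hessian term explicitly and discard its nonpositive part, where the paper simply bounds $|\nabla^2 f|$.
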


\begin{proof}
Note that
$$
|\nabla (\log(1+|x|^2/\eps))|=\left|\frac{2x}{\eps^2+|x|^2}\right|\leq\frac{C}{\varepsilon+|x|}
$$
and
\[\begin{split}
|\nabla^2 (\log(1+|x|^2/\eps))|&=\left|\nabla\left(\frac{2x}{\eps^2+|x|^2}\right)\right|
\leq \frac{C}{\eps^2+|x|^2}. 
\end{split}\] 
Thus, by It\^o's formula and since
$\sup_n(\|\sigma_n\|_\infty+\|F_n\|_\infty)<+\infty$, for any $C^2_b$ function $f$,
\begin{align}
  \EE(f(X_t^n-X_t^m))=& f(0)+\frac{1}{2}\int_0^t \EE\big( \nabla^2
  f(X_s^n-X_s^m)\,(\sigma_n\sigma^*_n(X_s^n) \notag \\
  &+\sigma_m\sigma^*_m(X_s^m)-\sigma_n(X_s^n)
  \sigma_m(X_s^m)-\sigma_m(X_s^m)\sigma^*_n(X_s^n)\big)\,ds \notag \\
  &+\int_0^t \EE(\nabla f(X_s^n-X_s^m)\cdot ( F(s,X_s^n)- F(s,X_s^m)))\,ds \notag \\
  \leq & f(0)+\frac{1}{2}\int_0^t \EE\Big(|\nabla^2
  f(X_s^n-X_s^m)\,\big(|\sigma(X_s^n)-\sigma(X_s^m)|^2 \notag \\
  +\sup_k\|\sigma_k\|_{L^\infty}\,&(|\sigma_n(X_s^n)-\sigma(X_s^n)|+
  |\sigma_m(X_s^m)-\sigma(X_s^m)|)\big)\Big)ds \notag \\
  &+\int_0^t \EE(|\nabla f(X_s^n-X_s^m)|\,|F_n(s,X_s^n)-F_m(s,X_s^m)|)\,ds.
  \label{eq:calcul-1}
\end{align}
Hence
\begin{align}
  \EE(Q^{(\varepsilon)}_{nm}(t))\leq & C\int_0^t \EE\left(\frac{|\sigma(s,X_s^n)-\sigma(s,X_s^m)|^2}{\eps^2+|X_s^n-X_s^m|^2}\right)\,ds+
  C\,\frac{\eta(n,m)}{\eps^2} \notag \\
  &+C\int_0^t \EE\left(\frac{|F(s,X_s^n)- F(s,X_s^m)|}{\eps+|X_t^n-X_t^m|}\right)\,ds,
  \label{eq:calcul-2}  
\end{align}
with $C$ a constant independent of $n$ and $\eps$ and $\eta(n,m)\rightarrow 0$ as
$n,\,m\rightarrow \infty$ by Assumptions~(\ref{eq:hyp-1}) and~(\ref{eq:hyp-2}).

Now as $\|\sigma\|_{H^1(u_n)}+\|\sigma\|_{H^1(u_m)}<\infty$, denoting $h=M|\nabla\sigma|$, 
\[
\int_0^T\int h^2(t,x)\,(u_n(t,dx)+u_m(t,dx))\,dt\leq \|\sigma\|_{H^1(u_n)}+\|\sigma\|_{H^1(u_m)}\leq C,
\]
with $C$ independent of $n$, $m$ and $\eps$. Now,
\[
\int_0^t \EE\left(\frac{|\sigma(s,X_s^n)-\sigma(s,X_s^m)|^2}{\eps^2+|X_s^n-X_s^m|^2}\right)\,ds\leq \int_0^t \EE(h^2(s,X_s^n)+h^2(s,X_s^m))\,ds.
\]
As $u_n$ is the law of $X_s^n$ then one obtains that
\[
\int_0^t \EE\left(\frac{|\sigma(s,X_s^n)-\sigma(s,X_s^m)|^2}{\eps^2+|X_s^n-X_s^m|^2}\right)\,ds\leq C.
\]
We now turn to the term involving $F$ and introduce the corresponding $h=|F|+M_{1/\eps}\nabla F$. 

By Lemma \ref{maximalF}
\[\begin{split}
\int_0^t \EE&\left(\frac{|F(s,X_n^s)-F(s,X_m^s)|}{\eps+|X_n^s-X_m^s|}\right)\,ds
\\
&\leq \int_0^t\int h(s,x)\,(u_n(s,x)+u_m(s,x))\,dx\,ds\\
\end{split}\]
By the definition \eqref{W11}
\[\begin{split}
 \int_0^t\int & h(s,x)\,(u_n(s,x)+u_m(s,x))\,dx\,ds\leq C\,\frac{|\log \eps|}{\eps\,\phi(\eps^{-1})}.\\
\end{split}\]
Define $\tilde \eta(\eps)=(\eps\,\phi(\eps^{-1}))^{-1}\rightarrow 0$ as $\eps\rightarrow 0$ since $\phi$ is super linear.

Combining the previous inequalities, we obtain~\eqref{controlQ}.
\end{proof}

Fix $p>1$. The next step consists in deducing from Lemma~\ref{lem:1} that $(X_t^n-\xi)$ is a
Cauchy sequence in $L^p(\Omega,L^\infty([0,T]))$. Since $F_n$ and $\sigma_n$ are uniformly
bounded, it is standard to prove that $X^n_t-\xi\in L^p(\Omega,\ L^\infty([0,\ T]))$ for all $n\geq 1$, so we only need to
prove the next lemma.
\begin{lem}
  \label{lem:2}
  For all $p>1$,
  \begin{equation}
    \EE\left(\sup_{t\in [0,\ T]}|X_t^n-X_t^m|^p\right)\longrightarrow 0\ \quad \mbox{as}\ n,\,m\rightarrow +\infty.\label{Cauchy}
  \end{equation}
\end{lem}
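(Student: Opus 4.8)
The plan is to deduce the Cauchy estimate \eqref{Cauchy} from Lemma~\ref{lem:1} in two moves: first upgrade the bound on $\sup_t\EE(Q^{(\eps)}_{nm}(t))$ to a bound on $\EE(\sup_t Q^{(\eps)}_{nm}(t))$, and then convert control of the logarithmic functional into control of the $p$-th moment of the supremum by a Chebyshev/H\"older splitting.

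For the first move I would write, via It\^o's formula, $Q^{(\eps)}_{nm}(t)=A_t+M_t$, where $A_t$ is the finite-variation part (the drift and second-order terms already estimated in the proof of Lemma~\ref{lem:1}) and $M_t=\int_0^t \nabla f(X^n_s-X^m_s)^*(\sigma_n(X^n_s)-\sigma_m(X^m_s))\,dW_s$ with $f(x)=\log(1+|x|^2/\eps^2)$. Since all the bounds in Lemma~\ref{lem:1} were obtained after taking absolute values inside the time integral, the same computation gives $\EE(\sup_t A_t)\leq\EE(\int_0^T|\cdots|\,ds)$, bounded by the right-hand side of \eqref{controlQ}. For the martingale term I would use the Burkholder--Davis--Gundy inequality, $\EE(\sup_t|M_t|)\leq C\,\EE(\langle M\rangle_T^{1/2})\leq C(\EE\langle M\rangle_T)^{1/2}$, and estimate the bracket $\langle M\rangle_T=\int_0^T|\nabla f|^2|\sigma_n(X^n_s)-\sigma_m(X^m_s)|^2\,ds$ exactly as the diffusion term of Lemma~\ref{lem:1}: splitting $\sigma_n(X^n)-\sigma_m(X^m)$ into $\sigma(X^n)-\sigma(X^m)$ (controlled by $(h(X^n)+h(X^m))^2$ with $h=M|\nabla\sigma|$ through \eqref{maximal} and the $H^1(u_n),\,H^1(u_m)$ bounds) plus the two consistency errors $\sigma_n(X^n)-\sigma(X^n)$, $\sigma_m(X^m)-\sigma(X^m)$ (bounded using $|\nabla f|\leq C/\eps$, the $L^\infty$ bounds and \eqref{eq:hyp-1}), one obtains $\EE\langle M\rangle_T\leq C(1+\eta(n,m)/\eps^2)$. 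Altogether, for fixed $\eps$, $\limsup_{n,m\to\infty}\EE(\sup_{t\in[0,T]}Q^{(\eps)}_{nm}(t))\leq C\,(1+|\log\eps|\,\tilde\eta(\eps))$.

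For the second move I would first record the uniform moment bound $\sup_{n,m}\EE(\sup_t|X^n_t-X^m_t|^{p'})<\infty$ for every $p'\geq 1$, which follows from $d(X^n_t-X^m_t)=(F_n(X^n_t)-F_m(X^m_t))dt+(\sigma_n(X^n_t)-\sigma_m(X^m_t))dW_t$, the uniform $L^\infty$ bounds on $F_n,\sigma_n$, and BDG (note that $\xi$ cancels). Fixing $p'>p$ and $\delta>0$, I would split over the event $E=\{\sup_t|X^n_t-X^m_t|>\delta\}$: on $E^c$ the integrand is $\leq\delta^p$, while on $E$ H\"older gives $\EE(\sup_t|X^n-X^m|^p\mathbf{1}_E)\leq(\EE\sup_t|X^n-X^m|^{p'})^{p/p'}\PP(E)^{1-p/p'}$. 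Since $\sup_t|X^n-X^m|>\delta$ forces $\sup_t Q^{(\eps)}_{nm}\geq\log(1+\delta^2/\eps^2)$, Markov's inequality yields $\PP(E)\leq \EE(\sup_t Q^{(\eps)}_{nm})/\log(1+\delta^2/\eps^2)$, so that
\[
\EE\Big(\sup_{t\in[0,T]}|X^n_t-X^m_t|^p\Big)\leq \delta^p+C\left(\frac{\EE(\sup_t Q^{(\eps)}_{nm})}{\log(1+\delta^2/\eps^2)}\right)^{1-p/p'}.
\]

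Finally I would take limits in the right order. For fixed $\eps$, letting $n,m\to\infty$ removes the $\eta(n,m)$ terms and bounds the right-hand side by $\delta^p+C(C(1+|\log\eps|\tilde\eta(\eps))/\log(1+\delta^2/\eps^2))^{1-p/p'}$; choosing $\delta=\sqrt\eps$ makes $\log(1+\delta^2/\eps^2)=\log(1+\eps^{-1})$ comparable to $|\log\eps|$, so the ratio is at most $C(|\log\eps|^{-1}+\tilde\eta(\eps))\to 0$ as $\eps\to0$, while $\delta^p=\eps^{p/2}\to0$. Letting $\eps\to0$ then gives $\limsup_{n,m}\EE(\sup_t|X^n-X^m|^p)=0$, which is \eqref{Cauchy}. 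I expect the main obstacle to be the first move, upgrading $\sup_t\EE$ to $\EE\sup_t$: it requires re-running It\^o's formula to isolate the martingale part and bounding its quadratic variation uniformly in $n,m,\eps$ with exactly the ingredients of Lemma~\ref{lem:1}; the other delicate point is the quantitative balancing of $\eps$ and $\delta$ in the last step, so that $\tilde\eta(\eps)$ beats $1/\log(1+\delta^2/\eps^2)$ while keeping $\delta\to0$.
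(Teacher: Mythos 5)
Your proposal is correct, but it reaches \eqref{Cauchy} by a genuinely different route from the paper. The paper proceeds in two passes: it first proves the pointwise-in-time statement $\sup_{t\in[0,T]}\EE(|X^n_t-X^m_t|^p)\to 0$ by the splitting over $\{|X^n_t-X^m_t|\geq L\}$, $\{\sqrt{\eps}\leq|X^n_t-X^m_t|<L\}$ and the complement, and only then moves the supremum inside the expectation by decomposing $X^n_t=\xi+A^n_t+M^n_t$ \emph{itself} into drift and martingale parts, re-running the computation of Lemma~\ref{lem:1} on $|A^n_{t\wedge\tau}-A^m_{t\wedge\tau}|^2\vee|M^n_{t\wedge\tau}-M^m_{t\wedge\tau}|^2$ uniformly over stopping times $\tau$, applying Doob's inequality to the martingale part and a first-passage-time plus layer-cake argument to the finite-variation part (which loses an exponent, giving $L^q$ convergence only for $q<p$ --- harmless since $p>1$ is arbitrary). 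You instead apply the maximal inequality at the level of the functional $Q^{(\eps)}_{nm}$: Burkholder--Davis--Gundy on the martingale part of $\log(1+|X^n-X^m|^2/\eps^2)$, whose bracket is, after Cauchy--Schwarz ($\EE\langle M\rangle_T^{1/2}\leq(\EE\langle M\rangle_T)^{1/2}$), exactly the diffusion term already estimated in Lemma~\ref{lem:1}; a single Chebyshev/H\"older splitting with $\delta=\sqrt{\eps}$ then concludes. Your version is more streamlined --- it avoids the stopping-time argument, the regularization of the supremum $\vee$, and the $q<p$ detour --- at the cost of the (easy) verification that the bracket of the martingale part of $Q^{(\eps)}_{nm}$ is controlled uniformly; the balancing of $\eps$ against $\log(1+\delta^2/\eps^2)$ in your last step is done correctly and mirrors the paper's choice $L^p/|\log\eps|$. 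Both arguments are valid and rest on the same two ingredients: the estimate of Lemma~\ref{lem:1} and a martingale maximal inequality.
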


\begin{proof}
For fixed $t$, for any $\eps$ and $L$ s.t.
$0<\varepsilon<L$,
\[\begin{split}
\EE(|X_t^n-X_t^m|^p)\leq &\EE(|X_t^n-X_t^m|^p; |X_t^n-X_t^m|\geq L)+\eps^{p/2}\\
&+L^p \PP(|X_t^n-X_t^m|\geq \sqrt{\eps}).
\end{split}\]
Note that
\[
\EE(|X_t^n-X_t^m|^p; |X_t^n-X_t^m|\geq L)\leq
\frac{1}{L}(\EE(|X_t^n-\xi|^{p+1})+\EE(|X_t^m-\xi|^{p+1})).
\]
By the inequality of Burkholder-Davis-Gundy and since $F_n$ and $\sigma_n$ are uniformly
bounded, it is standard to check that
$$
\sup_{n\geq 1,t\in[0,T]}\EE(|X_t^n-\xi|^{p+1})<+\infty.
$$
Finally
\[
\PP(|X_t^n-X_t^m|\geq \sqrt{\eps})\leq \frac{\EE Q^{(\eps)}_{nm}(t)}{|\log \eps|}.
\]
Thus
\[
\EE(|X_t^n-X_t^m|^p)\leq C\left[\frac{1}{L}+\eps^{p/2}+\frac{L^p}{|\log \eps|}\left(1+|\log \eps|\,\tilde \eta(\eps)+\frac{\eta(n,m)}{\eps^2}\right)\right].
\]
Taking for example $\varepsilon^2=\eta(n,m)$ and $L=\left(\frac{1}{|\log
    \varepsilon|}+\tilde{\eta}(\varepsilon)\right)^{-1/2p}$, one concludes that
$$
\sup_{t\in[0,T]}\EE(|X^n_t-X^m_t|^p)\rightarrow 0\quad\text{\ as\ }n,m\rightarrow+\infty
$$
holds.

In order to pass the supremum inside the expectation, it suffices to observe that the
computation of~(\ref{eq:calcul-1}--\ref{eq:calcul-2}) in the proof of Lemma~\ref{lem:1} can be
applied to
$|A^n_{t\wedge\tau}-A^m_{t\wedge\tau}|^2\vee|M^n_{t\wedge\tau}-M^m_{t\wedge\tau}|^2$, where
$\tau$ is any stopping time and $X^n_t=\xi+A^n_t+M^n_t$ is Doob's decomposition of the
semi martingale $X^n_t$, i.e.\
$$
A^n_t=\int_0^t F(s,X^n_s)ds\qquad\text{and}\qquad M^n_t=\int_0^t \sigma(s,X^n_s)dW_t.
$$
Note that to be fully rigorous, one first needs to regularize the supremum $\vee$. 

Instead
of~(\ref{eq:calcul-2}), we obtain
\begin{multline*}
\EE\log\left(1+\frac{|A^n_{t\wedge\tau}-A^m_{t\wedge\tau}|^2\vee|M^n_{t\wedge\tau}-M^m_{t\wedge\tau}|^2}{\varepsilon^2}\right) \\
   \begin{aligned}
     \leq & C\int_0^t \EE\left(\frac{|\sigma(s,X_s^n)-\sigma(s,X_s^m)|^2}
       {\eps^2+|A^n_t-A^m_t|^2\vee|M^n_t-M^m_t|^2}\right)\,ds+
     C\,\frac{\eta(n,m)}{\eps^2} \\
     &+C\int_0^t \EE\left(\frac{|F(s,X_s^n)-
         F(s,X_s^m)|}{\eps+|A^n_t-A^m_t|\vee|M^n_t-M^m_t|}\right)\,ds, \\ 
\end{aligned}
\end{multline*}
or
\begin{multline*}
\EE\log\left(1+\frac{|A^n_{t\wedge\tau}-A^m_{t\wedge\tau}|^2\vee|M^n_{t\wedge\tau}-M^m_{t\wedge\tau}|^2}{\varepsilon^2}\right) \\
\begin{aligned}
\leq
     &C\int_0^t
\EE\left(\frac{|\sigma(s,X_s^n)-\sigma(s,X_s^m)|^2}{\eps^2+\frac{1}{4}|X_s^n-X_s^m|^2}\right)\,ds+ 
     C\,\frac{\eta(n,m)}{\eps^2} \\
     &+C\int_0^t \EE\left(\frac{|F(s,X_s^n)-
         F(s,X_s^m)|}{\eps+\frac{1}{2}|X_t^n-X_t^m|}\right)\,ds.
   \end{aligned}
\end{multline*}
Therefore, the same computation as in Lemma~\ref{lem:1} gives
$$
\sup_{t\in[0,T],\ \tau\text{\ stopping time}}\EE(|A^n_{t\wedge\tau}-A^m_{t\wedge\tau}|^p\vee|M^n_{t\wedge\tau}-M^m_{t\wedge\tau}|^p)\rightarrow 0\quad\text{\ as\ }n,m\rightarrow+\infty.
$$
Since $p>1$, Doob's inequality entails
$$
\EE(\sup_{t\in[0,T]}|M^n_{t}-M^m_{t}|^p)\rightarrow 0\quad\text{\ as\ }n,m\rightarrow+\infty.
$$
Fix $\eta>0$, and fix $n_0$ such that
$$
\sup_{t\in[0,T],\ \tau\text{\ stopping
    time}}\EE(|A^n_{t\wedge\tau}-A^m_{t\wedge\tau}|^p)\leq\eta
$$
for all $n, m\geq n_0$. For all $M>0$, let $\tau=\inf\{t\geq 0:|A^n_t-A^m_t|\geq M\}$. Then
$$
\PP(\sup_{t\in[0,T]}|A^n_t-A^m_t|\geq M)=\PP(\tau\leq T)\leq\frac{\eta}{M^p}.
$$
Now, for all $1<q<p$,
\begin{align*}
  \EE(\sup_{t\in[0,T]}|A^n_t-A^m_t|^q) & =q\int_0^{+\infty}
  x^{q-1}\PP(\sup_{t\in[0,T]}|A^n_t-A^m_t|\geq x)dx \\ & \leq q\int_0^{+\infty}
  x^{q-1}\left(\frac{\eta}{x^p}\wedge 1\right)dx=\frac{p\,\,\eta^{q/p}}{p-q}.
\end{align*}
Therefore
$$
\EE(\sup_{t\in[0,T]}|A^n_{t}-A^m_{t}|^q)\rightarrow 0\quad\text{\ as\ }n,m\rightarrow+\infty,
$$
which concludes the proof of \eqref{Cauchy}.
\end{proof}

From the fact that $(X^n-\xi)$ is a Cauchy sequence in $L^p(\Omega,L^\infty([0,T]))$, it is
standard to deduce the almost sure convergence for the $L^\infty$ norm of a subsequence of
$(X^n_t,t\in[0,T])_n$ to a process $(X_t,t\in[0,T])$ such that $(X_t-\xi,t\in[0,T])\in
L^p(\Omega,L^\infty([0,T]))$ for all $p>1$. Since the convergence holds for the $L^\infty$
norm, the process $X$ is a.s. continuous and adapted to the filtration $(\mathcal{F}_t)_{t\geq
  0}$. 

Since $u_n$ converges to $u$ in the weak-* topology of measures, we have for all bounded continuous function $f$ on
$[0,T]\times\RR^d$
$$
\mathbb{E}\int_0^T f(t,X_t)dt=\int_{\RR^d}\int_0^T f(t,x)u(dt,dx).
$$
so $u(dt,dx)=u(t,dx)dt$, where $u(t,dx)$ is the law of $X_t$.

Defining for all $t\in[0,T]$
\[
Y_t:=\int_0^t F(s,X_s)ds+\int_0^t\sigma(s,X_s)dW_s,
\]
it only remains to check that $Y_t=X_t$ for all $t\in[0,T]$, a.s., i.e.\ by continuity that $Y_t=X_t$ a.s., for all $t\in[0,T]$.

As 
\[
X_t^n=\int_0^t F_n(s,X_s^n)ds+\int_0^t\sigma_n(s,X_s^n)\,dW_s,
\]
one has $Y_t=X_t$ provided that
\[
\int_0^t \EE(|F_n(s,X_s^n)-F(s,X_s)|+|\sigma_n(s,X_s^n)-\sigma(x,X_s)|^2)\,ds\longrightarrow 0.
\]
From the assumption \eqref{eq:hyp-1} and the $L^\infty$ bounds on $F$ and $\sigma$ \eqref{eq:hyp-2}, this is implied by: For any fixed $\eps$ 
\[
\int_0^T \PP(|F(s,X_s^n)-F(s,X_s)|>\eps)+\PP(|\sigma(s,X_s^n)-\sigma(x,X_s)|>\eps)\,ds\longrightarrow 0.
\]
By the almost sure convergence of $X_s^n$ this would be automatic if $F$ and $\sigma$ are continuous or if the law $u_n$ was absolutely continuous with respect to the Lebesgue measure and equi-integrable (using then the Lebesgue points of $F$ and $\sigma$). In general however we require some additional work. We prove it for $\sigma$, the argument for $F$ being fully similar.

By Prop. \ref{semicont}
\[\begin{split}
\int_0^T \int_{\R^d} (M|\nabla \sigma(t,x)|)^2\,&(u(t,dx)+u_n(t,dx))\,dt\\
&\leq  \|\sigma\|_{H^1(u_n)}+\liminf\|\sigma\|_{H^1(u_n)}\leq C.
\end{split}\]
Now by \eqref{maximal}
\begin{multline*}
  \PP(|\sigma(s,X_s^n)-\sigma(s,X_s)|>\eps) \\
\leq \PP((M|\nabla\sigma|(s,X_s^n)+M|\nabla \sigma|
  (s,X_s))>\eps/|X_s^n-X_s|) \\ \leq\PP(|X^n_s-X_s|>\varepsilon^2)+\PP(M|\nabla\sigma|(s,X^n_s)\geq\frac{1}{2\varepsilon})+\PP(M|\nabla\sigma|(s,X_s)\geq\frac{1}{2\varepsilon}),
\end{multline*}
and one easily concludes as $|X_s^n-X_s|\longrightarrow 0$ almost surely.

Note that this shows that for this precise point, $\|\sigma\|_{H^s(u)}<\infty$ for some $s\in(0,1)$ would be enough
instead of $\|\sigma\|_{H^1(u)}<\infty$.

\subsection{Uniqueness}
Consider two solutions $X$ and $Y$ satisfying the assumptions of point~(ii) in Th.~\ref{maintheo}. Define a family of functions
$(L_\eps)_\eps$ in $C^\infty(\R^d)$ satisfying
\[
L_\eps(x)=
1\ \mbox{if}\ |x|\geq \eps,\quad L_\eps(x)=0\ \mbox{if}\ |x|\leq \eps/2,\quad \eps\,\|\nabla L_\eps\|_{L^\infty}+\eps^2\,\|\nabla^2 L_\eps\|\leq C,\\
\]
with $C$ independent of $\eps$, and $L_\eps(x)\geq L_{\eps'}(x)$ for all $\eps\leq\eps'$ and $x\in\RR^d$. Use It\^o's formula 
\[\begin{split}
\EE(L_\eps(X_t-Y_t))=&L(0)+\int_0^t \EE\left(\nabla L_\eps(X_s-Y_s)\cdot(F(s,X_s)-F(s,Y_s)\right)\,ds\\
&+\int_0^t \EE\Big(\nabla^2 L_\eps(X_s-Y_s):(\sigma\sigma^* (X_s)\\
&+\sigma\sigma^*(Y_s)-\sigma(X_s)\sigma^*(Y_s)-\sigma(Y_s)\sigma^*(X_s))
\big)\,ds.
\end{split}\] 
Hence 
\[\begin{split}
\EE(L_\eps(X_t-Y_t))\leq C\,\int_0^t \EE\Big(\mathbbm{1}_{\eps/2\leq|X_t-Y_t|\leq \eps}\,&\Big(
\frac{|\sigma(s,X_s)-\sigma(s,Y_s)|^2}{\eps^2}\\
&+\frac{|F(s,X_s)-F(s,Y_s)|}{\eps}
\Big)\Big)\,ds.
\end{split}\]
Now denote $h=M|\nabla\sigma|$ so that
\[
\int_0^T\int |h(t,x)|^2\,(u_X(t,dx)+u_Y(t,dx))\,dt\leq C<\infty.
\]
Define as well $\tilde h_\eps=|F|+M_{1/\eps} \nabla F$ s.t.
\[
\int_0^T \int \tilde h_\eps\,(u_X+u_Y)\,dx\,ds\leq \frac{C\,|\log \eps|}{\eps\,\phi(\eps^{-1})}.
\]
The corresponding computation involving $\tilde h_\eps$ is now tricky, precisely because of the dependence on $\eps$ in $\tilde h_\eps$. To simplify it, we will use a slightly different definition.

First note that one can always replace $\phi$ by a function growing slower (as long as it is
still super linear). Without loss of generality, we may hence assume that $\phi(\xi)/\xi$ is a
non-decreasing function which grows at most like $\log \xi$ and in particular that
\[
\frac{1}{C} \eps\, \phi(\eps^{-1})\leq \frac{\phi(\xi)}{\xi}\leq C\,\eps\, \phi(\eps^{-1})\,\quad\forall \xi\in [\eps^{-1/2},\ \eps^{-1}].
\]
Consider a partition of $(0,1)$ in $\bigcup_i I_i$ where the $I_i=[a_i,\;b_i)$ are
disjoint with $b_i=\sqrt{a_i}$ (except for $I_0:=[1/2,1)$) so that 
\[
|I_i|\sim\sqrt{a_i} \quad\text{when\ }i\rightarrow+\infty.
\] 
Now for any $\eps\in I_i$, choose $\bar h_\eps=\tilde h_{a_i}$. One has
\[
\int_0^T\int \bar h_\eps(t,x)\,(u_X(t,x)+u_Y(t,x))\,dx\,dt\leq C\,
\frac{|\log \eps|}{\eps\,\phi(\eps)}\leq 2C^2\,\frac{|\log b_i|}{b_i\,\phi(b_i^{-1})}.
\]
Now by \eqref{maximal} and Lemma \ref{maximalF}
\[\begin{split}
\EE(L_\eps(X_t-Y_t))\leq & C\,\int_0^t \EE\left[(h^2(s,X_s)+h^2(s,Y_s))\mathbbm{1}_{\eps/2\leq|X_t-Y_t|\leq \eps}\right]\,ds\\
&+C\,\int_0^t \EE\left[(\bar h_\eps(s,X_s)+\bar h_\eps(s,Y_s))\,\mathbbm{1}_{\eps/2\leq|X_t-Y_t|\leq \eps}\right]\,ds.
\end{split}\]
Denote
\[
\alpha_k=\int_0^t \EE\left[(h^2(s,X_s)+h^2(s,Y_s))\mathbbm{1}_{2^{-k-1}\leq|X_t-Y_t|\leq 2^{-k}}\right]\,ds.
\]
Note that
\[\begin{split}
\sum_k \alpha_k\leq &\int_0^t \EE\left((h^2(s,X_s)+h^2(s,Y_s)\right)\,ds\\
=&\int_0^t\int h^2(s,x)\,(u_X(dx,s)+u_Y(dx,s))\,ds\leq C.
\end{split}\]
Therefore $\alpha_k\longrightarrow 0$ as $k\rightarrow +\infty$.

Denote similarly
\[
\beta_k=\int_0^t \EE\left((\bar h_{2^{-k}}(s,X_s)+\bar h_{2^{-k}}(s,Y_s))\,\mathbbm{1}_{2^{-k-1}\leq|X_t-Y_t|\leq 2^{-k}}\right)\,ds.
\]
Denote $J_i=\{k,\ [2^{-k-1},\ 2^{-k})\subset I_i\}$. Note that $|J_i|\geq \frac{1}{C}\,|\log
b_i|$ (in fact, $|J_i|=\frac{|\log b_i|}{2\log 2}$) and since $\bar h_\eps$ is fixed on $\eps\in I_i$
\[\begin{split}
\frac{1}{|J_i|}\sum_{k\in J_i} \beta_k&\leq \frac{1}{|J_i|}\int_0^t \int \bar h_{b_i}(s,x)\,(u_X(dx,s)+u_Y(dx,s))\,ds\\
&\leq \frac{C^2}{b_i\,\phi(b_i^{-1})}\longrightarrow 0\quad \mbox{as}\ i\rightarrow \infty.
\end{split}\] 
Therefore $\beta_{n_k}\longrightarrow 0$ as $k\rightarrow +\infty$ for some subsequence $n_k\rightarrow+\infty$.
Consequently, since the sequence of functions $L_\eps$ is non increasing,
\[
\sup_{t\in [0,\ T]} \EE(L_\eps(X_t-Y_t))\longrightarrow 0\ \mbox{as}\ \eps\rightarrow 0.
\]
On the other hand
\[
\EE(L_\eps(X_t-Y_t))\geq \PP(|X_t-Y_t|>\eps),
\]
and by taking the limit $\eps\rightarrow 0$, we deduce that for any $t\in[0,T]$
\[
\PP(|X_t-Y_t|>0)=0.
\]
Therefore, $X_t=Y_t$ for all $t\in\QQ\cap[0,T]$ almost surely, and since $X_t$ and $Y_t$ have a.s.
continuous paths, we deduce that
$$
\PP(\sup_{t\in[0,T]}|X_t-Y_t|=0)=1,
$$
which proves pathwise uniqueness.
\section{Proof of Theorem \ref{theo1d}}
\label{sec:pf-1d}
This proof follows exactly the same steps as the general multi-dimensional case given in Section \ref{proofmaintheo}. The only differences are the functionals used and accordingly we skip the other parts of the proof which are identical.

Technically the reason why the one dimensional case is so special is that $|x|$ is linear
except at $x=0$. We do not know whether this corresponds to a deeper more intrinsic difference
between $d=1$ and $d>1$ or if the better results are in fact also true for $d>1$. 
\subsection{Existence}
For $d=1$, we replace the functional $Q^{(\varepsilon)}_{nm}$ by
\[
\tilde Q^{(\varepsilon)}_{nm}(t)=e^{-U_t^{n,m}}\,|X_t^n-X_t^m|\,\log \left(1+\frac{|X_t^n-X_t^m|^2}{\eps^2}\right),
\]
for $U_t^{n,m}$ a nonnegative random variable satisfying $dU_t^{n,m}=\lambda_t^{n,m}\,dt$ with
$\lambda_t^{n,m}$ an adapted process (measurable function of a continuous, adapted process) to
be chosen later.

Note that $f(x)=|x|\,\log(1+|x|^2/\eps^2)$ satisfies
$$
|f'(x)|\leq 4\log\left(1+\frac{|x|^2}{\varepsilon^2}\right)\qquad\text{and}\qquad |f''(x)|\leq\frac{C}{\varepsilon+|x|}.
$$
Therefore by It\^o's formula
\[\begin{split}
\EE(\tilde Q^{(\varepsilon)}_{nm}(t))\leq &C+C\,\int_0^t \EE\left(\frac{|\sigma(X_s^n)-\sigma(X_s^m)|^2}{\eps+|X_s^n-X_s^m|}\right)\,ds+\frac{\eta(n,m)}{\eps}\\
&+\int_0^t \EE\Bigg(|X_s^n-X_s^m|\,\log(1+|X_s^n-X_s^m|^2/\eps^2)\\
&\qquad\qquad\left(
4\,\frac{|F(s,X_s^n)-F(s,X_s^m)|}{|X_s^n-X_s^m|}-\lambda_t^{n,m}\right)\Bigg)
\,ds.
\end{split}\]
The first term is treated identically as for the multi-dimensional case.
The only difference here is that the careful choice of $\tilde Q^{(\varepsilon)}_{nm}$ improved the exponent of $|X_s^n-X_s^m|$ to $1$ instead of $2$
in the denominator. Therefore this term can be controlled with the ${H^{1/2}(u_{n,m})}$ norm of $\sigma$ by using Lemma \ref{maximal1/2} instead of estimate \eqref{maximal}.

The drawback is that the term with $F$ must be dealt with differently. We introduce $\tilde h=M |\nabla F|$  s.t. 
\[
\int_0^T \int_{\RR^d} \tilde h(t,x)\,(u_m(t,dx)+u_n(t,dx))\,dt\leq C.
\]
One poses 
\[
\lambda_t^{n,m}=4\,\left(\tilde h(t,X_s^m)
+\tilde h(t,X_t^m)\right).
\]
Therefore we deduce that
\[
\sup_{t\leq T} \EE(\tilde Q^{(\varepsilon)}_{nm}(t))\leq C+\frac{\eta(n,m)}{\eps}.
\]
Using a similar method as in Theorem~\ref{maintheo}, we write for constants $L$ and $K$ to be chosen later
\begin{multline*}
  \EE(|X^n_t-X^m_t|^p) \leq\EE(|X^n_t-X^m_t|^p;\,|X^n_t-X^m_t|\geq L)+\frac{1}{|\log\varepsilon|^{p/2}} \\ +\PP(U^{n,m}_t\geq\log K)
+L^p\PP\left(|X^n_t-X^m_t|\geq\frac{1}{\sqrt{|\log\varepsilon|}};\,U^{n,m}_t\leq \log K\right)
\end{multline*}
 Note that
\[
\EE(U_t^{n,m})=\EE\left(\int_0^t \lambda_s^{n,m}\,ds\right)\leq \int_0^t \tilde h(s,x)\,(u_n(s,dx)+u_m(s,dx))\,ds\leq C.
\]
Consequently
\begin{equation*}
\PP(U_t^{n,m}\geq \log(K))\leq \frac{C}{\log K}.
\end{equation*}
In addition, for $\varepsilon$ small enough,
$$
\PP\left(|X^n_t-X^m_t|\geq\frac{1}{\sqrt{|\log\varepsilon|}};\,U^{n,m}_t\leq \log K\right)\leq\frac{K\,\EE\tilde
  Q^{(\varepsilon)}_{nm}(t)}{2\sqrt{|\log\varepsilon|}}.
$$
Therefore,
\begin{multline*}
  \EE(|X^n_t-X^m_t|^p) \leq C\left(\frac{1}{L}+\frac{1}{|\log\varepsilon|^{p/2}}+\frac{1}{\log K}+\frac{L^p\, K\,\left(1+\frac{\eta(n,m)}{\varepsilon}\right)}{\sqrt{|\log\varepsilon|}}\right).
\end{multline*}
Taking for example $\varepsilon=\eta(n,m)$, $K=|\log\varepsilon|^{1/8}$ and
$L=|\log\varepsilon|^{1/8p}$, we deduce that
$$
\sup_{t\in[0,T]}\EE(|X^n_t-X^m_t|^p)\rightarrow 0,\quad\text{\ as\ }n,m\rightarrow+\infty.
$$
The rest of the proof is similar.
\subsection{Uniqueness}
For simplicity, we assume here that $F=0$. Otherwise it is necessary to introduce $U_t$ as in the previous subsection but it is handled in exactly the same way.

We similarly change the definition of $L_\eps$ in
\[
\tilde L_\eps(x)=
|x|\ \mbox{if}\ |x|\geq \eps,\quad \tilde L_\eps(x)=0\ \mbox{if}\ |x|\leq \eps/2,\quad \|\nabla \tilde L_\eps\|_{L^\infty}+\eps\,\|\nabla^2 \tilde L_\eps\|\leq C,\\
\]
with $C$ independent of $\eps$. 

Applying It\^o's formula 
\[\begin{split}
&\EE(\tilde L_\eps(X_t-Y_t))\leq 
C\,\int_0^t \EE\left({\mathbbm{1}_{\eps/2\leq|X_t-Y_t|\leq \eps}}\,
\frac{|\sigma(X_s)-\sigma(Y_s)|^2}{\eps}\right)\,ds.
\end{split}\]
By using as before the assumptions, Lemma \ref{maximal1/2} and the corresponding definition of $H^{1/2}(u_X)$ and $H^{1/2}(u_Y)$, one deduces that
\[
\EE(\tilde L_\eps(X_t-Y_t))\longrightarrow 0\quad\mbox{as}\ \eps\rightarrow 0.
\]
This is slightly less strong than before ($L_\eps\gg\tilde L_\eps$ for $x\ll1$) but still enough. In particular one has if $\alpha\geq \eps$
\[
\PP(|X_t-Y_t|\geq \alpha)\leq \frac{1}{\alpha}\,\EE(\tilde L_\eps(X_t-Y_t)).
\] 
Therefore by taking $\eps\rightarrow 0$, one still obtains that for any $t\in [0,\ T]$,
\[
\PP(|X_t-Y_t|>0)=0.
\]
The conclusion follows in exactly the same way as before.
\section{Proof of Prop. \ref{apriorilp}}
\label{sec:pf-prop-apriori}
We simply use the energy estimates. The computations below are formal but could easily be made rigorous by taking a regularization of $\sigma, F$ and hence $a$ and then pass to the limit. 
\[\begin{split}
\frac{d}{dt}\int u^\alpha(t,x)\,dx=&-\alpha\,(\alpha-1)\int u^{\alpha-1}(t,x)\nabla u(t,x)\cdot F(t,x)\,dx \\ & -\alpha\,(\alpha-1)\,\int u^{\alpha-2}(t,x) \nabla u(t,x)^T\,a(t,x)\,\nabla u(t,x)\,dx\\
&-\alpha\,(\alpha-1)\int u^{\alpha-1}(t,x)\,\sum_{1\leq i,j\leq d}\frac{\partial u(t,x)}{\partial x_i}\,\frac{\partial
  a_{ij}(t,x)}{\partial x_j}\,dx. 
\end{split}\] 
Note that by \eqref{ellipticity}
\[
\int u^{\alpha-2}(t,x) \nabla u(t,x)^T\,a(t,x)\,\nabla u(t,x)\,dx\geq C\,\|\nabla u^{\alpha/2}\|_{L^2}^2.
\]
On the other hand 
\[\begin{split}
\int  u^{\alpha-1}(t,x)\nabla u(t,x)\cdot F(t,x)\,dx & \leq \|\nabla u^{\alpha/2}\|_{L^2}\,\|u^{\alpha/2}\|_{L^2}\,\|F\|_{L^\infty} \\
&\leq \frac{C}{4}\,\|\nabla u^{\alpha/2}\|_{L^2}^2+C'\,\int u^\alpha(t,x)\,dx.
\end{split}\]
And
\[\begin{split}
\int u^{\alpha-1}(t,x)\,\sum_{1\leq i,j\leq d}\frac{\partial u(t,x)}{\partial x_i}\,\frac{\partial
  a_{ij}(t,x)}{\partial x_j}\,dx &\leq \|\nabla u^{\alpha/2}\|_{L^2}\,\|u^{\alpha/2}\,\nabla a\|_{L^2}\\
&\leq \|\nabla u^{\alpha/2}\|_{L^2}\,\|\nabla a\|_{L^p}\,\|u^{\alpha/2}\|_{L^r},
\end{split}\]
with $1/2=1/p+1/r$, which can be done since $p>d\geq 2$ . Now by Sobolev embedding
\[
\|u^{\alpha/2}\|_{L^r}\leq \left(\int u^\alpha\,dx\right)^{\theta/2}\,\|\nabla u^{\alpha/2}\|_{L^2}^{1-\theta},
\]
for some $\theta\in (0,\ 1]$, precisely $1/r=1/2-(1-\theta)/d$ or $(1-\theta)/d=1/p$, provided that $p>d$. In that case we immediately deduce that
\[
\frac{d}{dt}\int u^\alpha(t,x)\,dx+\frac{C}{2}\int |\nabla u^{\alpha/2}|^2\,dx\leq C''\left(1+\|\nabla a\|_{L^p}^{2/\theta}\right)\,\int u^\alpha\,dx. 
\] 
This concludes the bound provided that
\[
\int_0^T \|\nabla a\|_{L^p}^{2/\theta}<\infty,
\]
which means that $\nabla a\in L^q_{t,loc}(L^p_x)$ with $1/q=\theta/2=1/2-d/2p$. This exactly corresponds to the condition $2/q+d/p=1$ with $p>d$.

Note that $p=d$ is critical here in the sense that the result could still hold in that case provided that the norm of $\nabla a$ is small enough with respect to the constant of ellipticity. 

Finally we hence deduce that for any $t$ and any $\alpha<\infty$
\[
\|u(t,.)\|_{L^\alpha}\leq \|u(t=0,.)\|_{L^\alpha}\leq C,
\]
with $C$ independent of $\alpha$ since $u_0\in L^1\cap L^\infty$. This implies that $\|u(t,.)\|_{L^\infty}\leq C$ and finishes the proof.

\section{Proof of Prop. \ref{prop:ae-well-posedness}}
\label{sec:pf-prop-ae-wp}
We are going to prove this result under the assumptions of Corollary~\ref{cor:better-v2}. The other cases are completely similar.

Fix a complete filtered probability space $(\Omega,(\mathcal{F}_t)_{t\geq 0},\PP)$ equipped with a $r$-dimensional standard Brownian
motion $W$. Fix also $u_0>0$ in $L^1\cap L^\infty$. Then, by Corollary~\ref{cor:better}, on the probability space
$(\RR^d\times\Omega,(\mathcal{B}(\RR^d)\otimes\mathcal{F}_t)_{t\geq 0},u_0(x)dx\,\PP(d\omega))$, there is strong existence and
pathwise uniqueness for~\eqref{sde} with $\xi(x,\omega)=x$. We deduce that strong existence for almost every deterministic initial
condition holds for \eqref{sde} on $(\Omega,(\mathcal{F}_t)_{t\geq 0},(W_t)_{t\geq 0},\PP)$.

For uniqueness, the two key points are
\begin{itemize}
\item first, that $u\in L^\infty$, $\sigma\in H^1(u)$ and $F\in W^{\phi,weak}(u)$ (instead of uniform bounds for $u_n$ only as in
  Theorem~\ref{maintheo});
\item second, that we are always in cases where uniqueness in law is known for \emph{all} initial conditions in~\eqref{sde}, and in
  particular for all deterministic initial conditions.
\end{itemize}
For all $x$ such that strong existence holds for~\eqref{sde} with $\xi=x$, let $X^x_t$ and $\hat{X}^x_t$ be two strong solutions
of~\eqref{sde} such that $X^x_0=\hat{X}^x_0=x$ a.s. Repeating the proof of Lemma~\ref{lem:1}, we have
\begin{multline*}
  \mathbb{E}\log\left(1+\frac{|X^x_t-\hat{X}^x_t|^2}{\varepsilon^2}\right)\leq
  C\int_0^t\mathbb{E}\left[ M|\nabla\sigma|(s,X^x_s)^2+M|\nabla\sigma|(s,\hat{X}^x_s)^2\right]ds \\
  +C\int_0^t\mathbb{E}\left[ (|F|+M_{1/\varepsilon}\nabla F)(s,X^x_s)+(|F|+M_{1/\varepsilon}\nabla F) (s,\hat{X}^x_s)\right]ds
\end{multline*}
By uniqueness in law, for all $s\geq 0$, $X^x_s$ and $\hat{X}^x_s$ have the same distribution, and so
\[\begin{split}
&\mathbb{E}\log\left(1+\frac{|X^x_t-\hat{X}^x_t|^2}{\varepsilon^2}\right)\\
&\qquad\qquad\leq
C\int_0^t\mathbb{E}\left[\left((M|\nabla\sigma|)^2+|F|+M_{1/\varepsilon}\nabla F\right)(s,X^x_s)\right]ds.
\end{split}\]
Let us denote by $M_t^\varepsilon(x)$ the integral in the r.h.s. Note that the l.h.s. may not be a measurable function of
$x$, but $M_t^\varepsilon(x)$ is, and choosing $\phi$ as in the proof of Corollary~\ref{maincorollary}
\begin{align*}
\int_{\RR^d} M_t^\varepsilon(x)\,u_0(x)\,ds & =\int_0^t\int_{\RR^d}\left((M|\nabla\sigma|)^2+|F|+M_{1/\varepsilon}\nabla F\right)(s,x)u(s,dx)\,ds \\ & 
\leq C\left(1+\frac{|\log\varepsilon|}{\varepsilon\phi(\varepsilon^{-1})}\right). 
\end{align*}
Now, copying the proof of Lemma~\ref{lem:2},
$$
\mathbb{E}(|X^x_t-\hat{X}^x_t|) \leq C\left[\sqrt{\varepsilon}+\frac{1}{L}+\frac{LM^\varepsilon_t(x)}{|\log\varepsilon|}\right]
$$
Let us denote by $N^\varepsilon_t(x)$ the r.h.s. Choosing $L=\left(\frac{1}{|\log\varepsilon|}+\tilde{\eta}(\varepsilon)\right)^{-1}$
with $\eta(\varepsilon)=(\varepsilon\phi(\varepsilon^{-1}))^{-1}$, we obtain
$$
\int_{\RR^d} N_t^\varepsilon(x)\,u_0(x)\,ds\leq C\left(\sqrt{\varepsilon}+\sqrt{\frac{1}{|\log\varepsilon|}+\tilde{\eta}(\varepsilon)}\right).
$$
Since the r.h.s.\ converges to 0 when $\varepsilon\rightarrow 0$, there exists a sequence $\varepsilon_k\rightarrow 0$ such that
$N_t^{\varepsilon_k}(x)\rightarrow 0$ for almost all $x$. The diagonal procedure then shows the existence of a subsequence
$\varepsilon'_k\rightarrow 0$ such that $N_t^{\varepsilon'_k}(x)\rightarrow 0$ for almost all $x$ and for all $t$ in a dense
denumerable subset of $[0,T]$. Since the paths of $X^x$ and $\hat{X}^x$ are continuous, we deduce that pathwise uniqueness holds for
almost all $x\in\mathbb{R}^d$.
\section*{Appendix: Sketch of the proof of Corollary \ref{maincorollary}}
\label{sec:pf-cor}
%
The only thing left to prove after Theorem \ref{maintheo} is: Assume  
$u\in L^{q'}_{t,loc}(L^{p'}_x(\R^d))$ then show that, for some super linear $\phi$,
\[
\|\sigma\|_{H^1(u)}\leq C\,\|\nabla\sigma\|_{L^{2q}_{t,loc}(L^{2p}_x)},\qquad
\|F\|_{W^{\phi,weak}(u)}\leq C\,\|\nabla F\|_{L^{q}_{t,loc}(L^{p}_x)}.
\]
From the fact that the maximal operator $M$ is bounded on $L^p$, $p>1$, then this is straightforward for $\sigma$ (as
$2p\geq 2>1$). It is also the case for $F$ whenever $p>1$ (taking $\phi(M)=M\log M$).

Therefore the key point is how to prove that for $F$ when $p=1$. 
 
Now fix $L$ and denote 
\[
h(t,x)=M_L\nabla F=C\,\sqrt{\log L}+C\,\int_{B_1(x)} \frac{|\nabla F(t,z)|\,\mathbbm{1}_{|\nabla F|\geq \sqrt{\log L}}\,dz}{(L^{-1}+|x-z|)\,|x-z|^{d-1}},
\] 
where $B_1(x)$ is the ball of radius 1 centered at $x$.
As $p'=\infty$, for almost any fixed $t$, $u(t,\cdot)\in L^1\cap
L^\infty_x$ and hence
\[\begin{split}
\int h(t,x)\,u(t,x)\,dx&\leq C\,\sqrt{\log L}+C\,\|u(t,.)\|_{L^\infty}\\
&\qquad\int \int_{B_1(x)} \frac{|\nabla F(t,z)|\,\mathbbm{1}_{|\nabla F|\geq \sqrt{\log L}}\,dz}{(L^{-1}+|x-z|)\,|x-z|^{d-1}}\,dx\\
\leq &C\,\sqrt{\log L}+C\,\log L\,\|u(t,.)\|_{L^\infty}\,\|\nabla F(t,.)\,\mathbbm{1}_{|\nabla F|\geq \sqrt{\log L}}\|_{L^1},
\end{split}\]
by Fubini's theorem. 

Therefore integrating now in time, by H\"older's estimates
\[
\int_0^T\int h(t,x)\,u(t,x)\,dx\,dt\leq C\,\sqrt{\log L}\,T+C\,\log L\,\|\nabla F\,\mathbbm{1}_{|\nabla F|\geq \sqrt{\log L}}\|_{L^q_t(L^1_x)}.
\]
Now if $\nabla F\in L^q_t(L^1_x)$ then de la Vall\'ee Poussin classical integrability result means that there exists a super linear $\psi$ s.t.
\[
\|\psi(\nabla F)\|_{L^q_t(L^1_x)}<\infty.
\]
Consequently 
\[
\int_0^T\int h(t,x)\,u(t,x)\,dx\,dt\leq C\,\sqrt{\log L}\,T+C\,\frac{(\log L)^{3/2}}{\psi(\sqrt{\log L})}.
\]
We may conclude that $\|\nabla F\|_{W^{\phi,weak}}$ is bounded for $\phi$ defined by
\[
\frac{L}{\phi(L)}=\frac{C\,\sqrt{\log L}}{\log L}+\frac{C\,\sqrt{\log L}}{\psi(\sqrt{\log L})},
\]
which is hence also super linear.
\bigskip

{\bf Acknowledgements.} P-E Jabin was partially supported by the KI-Net research network, NSF Grant 1107444.


\end{document}